\theoremstyle{plain}
\newtheorem{thm}{Theorem}[section]
\newtheorem*{thm*}{Theorem}
\newtheorem{prop}[thm]{Proposition}
\newtheorem{lem}[thm]{Lemma}
\newtheorem*{lem*}{Lemma}
\newtheorem{dfn}[thm]{Definition}
\newtheorem{conj}[thm]{Conjecture}
\newcommand{\BBE}{\mathbb{E}}
\newcommand{\BFP}{\mathbf{P}}
\date{}
\title{\vspace{-0.7cm}Ramsey numbers of degenerate graphs}
\author{
Choongbum Lee \thanks{Department of Mathematics,
MIT, Cambridge, MA 02139-4307. Email: cb\_lee@math.mit.edu.
Research supported by NSF Grant DMS-1362326.}
}
\begin{document}

\maketitle

\begin{abstract}
A graph is $d$-degenerate if all its subgraphs have a vertex of degree at most $d$.
We prove that there exists a constant $c$ such that 
for all natural numbers $d$ and $r$, 
every $d$-degenerate graph $H$ of chromatic number $r$ with 
$|V(H)| \ge 2^{d^22^{cr}}$ has Ramsey number at most $2^{d2^{cr}} |V(H)|$. 
This solves a conjecture of Burr and Erd\H{o}s from 1973.
\end{abstract}

\section{Introduction} \label{sec:intro}

Ramsey theory studies problems that can be grouped under
the common theme that `every large system contains a highly organized subsystem'.
A classical example is the celebrated van der Waerden theorem \cite{vdw} 
asserting that in every coloring of
the natural numbers with a finite number of colors, one can find 
monochromatic arithmetic progressions of arbitrary finite length.
This has motivated further results such as the Hales-Jewett theorem \cite{HaJe}
and Szemer\'edi's theorem \cite{Szemeredi} and has had a tremendous influence on 
Combinatorics and related fields.
See \cite{GrRoSp} for a comprehensive overview of Ramsey theory.

For a graph $H$, the {\em Ramsey number} of $H$, denoted $r(H)$, is defined as the minimum integer 
$n$ such that in every edge two-coloring of $K_n$, the complete graph on $n$ vertices,
there exists a monochromatic copy of $H$.
The name of the field has its origins in a 1930 paper of Frank P. Ramsey \cite{Ramsey},
who proved that $r(K_t)$ are finite for all natural numbers $t$.
In 1935, Erd\H{o}s and Szekeres \cite{ErSz} rediscovered Ramsey's theorem,
and brought it to more widespread attention by 
finding interesting geometric applications of its hypergraph extensions. 
Plenty of variants and applications have been found since then, 
and now it is considered to be one of the most important results in combinatorics,
lying at the point of intersection of many fields.

There are many fascinating problems concerning bounds on Ramsey numbers of various graphs.
Erd\H{o}s and Szekeres, in the paper mentioned above, established a recurrence relation 
on the Ramsey numbers of complete graphs that implies
$r(K_t) \le {2t-2 \choose t-1} = 2^{(2+o(1))t}$ for all natural numbers $t$.
Later, in 1947, Erd\H{o}s \cite{Erdos}, 
in one of the earliest applications of the probabilistic method, proved 
$r(K_t) \ge 2^{(1/2+o(1))t}$. 
These two bounds together show that $r(K_t)$ is exponential in terms of the number of vertices $t$. 
There have been some interesting improvements on these 
bounds \cite{Conlon_diag, Spencer},
but despite a great amount of effort, 
the constants in the exponents remain unchanged. See the recent survey paper of 
Conlon, Fox, and Sudakov \cite{CoFoSu_survey} for further information on graph Ramsey theory.

In 1973, Burr and Erd\H{o}s \cite{BuEr} initiated the study of Ramsey numbers of sparse graphs and conjectured
that the behavior of Ramsey numbers of sparse graphs should be dramatically different from that
of complete graphs.
A graph is {\em $d$-degenerate} if all its subgraphs contain a vertex of degree at most $d$. 
Degeneracy is a natural measure of sparseness of graphs as it implies
that for all subsets of vertices $X$, there are fewer than
$d|X|$ edges with both endpoints in $X$.
Burr and Erd\H{o}s conjectured that for every natural number $d$,
there exists a constant $c = c(d)$ such that every $d$-degenerate graph $H$ on $n$ vertices satisfies
$r(H) \le c n$. This is in striking contrast with the case of complete graphs
where the dependence on the number of vertices is exponential.

This conjecture has received much attention and 
motivated several important developments over the past 40 years. 
For example, the work of Chv\'atal, R\"odl, Szemer\'edi, and Trotter \cite{ChRoSzTr} from 1983
that we will discuss below is one of the earliest 
applications of the regularity lemma, and fostered further developments such
as the blow-up lemma of Koml\'os, Sark\"ozy, and Szemer\'edi \cite{KoSaSz}.
Furthermore, progress on the conjecture is intertwined with the development
of a powerful new tool in probabilistic combinatorics now known as 
dependent random choice.
Kostochka and R\"odl \cite{KoRo01} 
used its primitive version to study a special case of the conjecture, 
and later, using a different method \cite{KoRo04}, established the 
first polynomial bound $r(H) \le c_d n^2$ for all $d$-degenerate graphs $H$
on $n$ vertices (where $c_d$ is a constant depending on $d$).
The general framework of applying the dependent random choice technique to embed degenerate graphs 
was pioneered by Kostochka and Sudakov \cite{KoSu}, who improved the bound of
Kostochka and R\"odl to a nearly linear bound
$r(H) \le 2^{c_d (\log n)^{2d/(2d+1)}}n$.
Later, Fox and Sudakov \cite{FoSu09, FoSu09-2} refined the method to prove 
that $r(H) \le 2^{c_d \sqrt{\log n}}n$. 
See the survey paper of Fox and Sudakov \cite{FoSu11}
for an overview of the history and applications of this fascinating method.

Linear bounds were established for special cases such as
subdivisions of graphs by Alon \cite{Alon}, random graphs by Fox and Sudakov \cite{FoSu09-2}, and
graphs with small bandwidth by the author \cite{Lee}.
The conjecture has also been examined 
for weaker notions of sparseness. Chv\'atal, R\"odl, Szemer\'edi, and Trotter \cite{ChRoSzTr}
proved that if $H$ is a graph on $n$ vertices of maximum degree at most $\Delta$, 
then $r(H) \le c(\Delta) n$, where $c(\Delta)$ is a constant depending only on $\Delta$. 
Their proof used the regularity lemma and thus the dependence of $c(\Delta)$ on $\Delta$ 
was of tower type. This bound has been improved since then, by Eaton \cite{Eaton}, 
Graham, R\"odl, and Ruci\'nski \cite{GrRoRu00, GrRoRu01}, and then by 
Conlon, Fox, and Sudakov \cite{CoFoSu} to $r(H) \le c^{\Delta \log \Delta} n$.
For bipartite graphs, Conlon \cite{Conlon} and independently Fox and Sudakov \cite{FoSu09}
proved that a stronger bound $r(H) \le c^{\Delta} n$ holds.
These results are close to best possible since Graham, R\"odl, and Ruci\'nski \cite{GrRoRu01} 
constructed bipartite graphs $H$ on $n$ vertices of maximum degree $\Delta$ satisfying 
$r(H) \ge c^{\Delta}n$ (for a different constant $c$).

Chen and Schelp \cite{ChSc} considered another measure of sparseness.
They defined a graph to be {\em $p$-arrangeable} if there is an ordering $v_1, \cdots, v_n$
of the vertices such that for any vertex $v_i$, its neighbors to the right of $v_i$ 
have together at most $p$ neighbors to the left of $v_i$ (including $v_i$). 
This is a measure of sparseness that lies strictly between degeneracy and bounded
maximum degree. They showed that graphs with bounded arrangeability have Ramsey number
linear in the number of vertices, implying, in particular, that the
Burr-Erd\H{o}s conjecture holds for planar graphs. Furthermore, 
R\"odl and Thomas \cite{RoTh} showed that graphs with no $K_p$-subdivision 
have arrangeability less than $p^8$, and therefore have Ramsey number
linear in the number of vertices.

\medskip

In this paper, we build upon these developments and
settle the conjecture of Burr and Erd\H{o}s.
We say that a graph $G$ is {\em universal} for a family $\mathcal{F}$ of graphs if
it contains all graphs $F \in \mathcal{F}$ as subgraphs.
For an edge coloring of a graph, we say that a color is {\em universal} for a family $\mathcal{F}$
if the subgraph induced by the edges of that color is universal for $\mathcal{F}$.

\begin{thm} \label{thm:main}
There exists a constant $c$ such that the following holds for every natural number $d$, $r$, and $n$ satisfying $n \ge 2^{d^2 2^{cr}}$.
For every edge two-coloring of the complete graph on at least $2^{d2^{cr}}n$ vertices, one of 
the colors is universal for the family of 
$d$-degenerate $r$-colorable graphs on at most $n$ vertices.
\end{thm}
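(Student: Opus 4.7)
The plan is to isolate the dense monochromatic color in the two-coloring, use dependent random choice to extract from it an $r$-partite ``skeleton'' inside which every small vertex set has many common neighbors, and then embed any $d$-degenerate $r$-chromatic $H$ greedily along its degeneracy order.

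\textbf{Reduction and embedding structure.} Any edge two-coloring of $K_N$ has one color --- say red --- spanning at least half of the edges; let $G$ denote the red graph on $N = 2^{d\,2^{cr}} n$ vertices. It suffices to show that $G$ contains every $d$-degenerate $r$-chromatic graph $H$ on at most $n$ vertices as a subgraph. To this end I would construct pairwise disjoint subsets $V_1,\dots,V_r\subseteq V(G)$, each of size at least $n$, with the property that for every $i$ and every $S\subseteq\bigcup_{j\ne i} V_j$ with $|S|\le d$, the common $G$-neighborhood of $S$ meets $V_i$ in at least $n$ vertices. The construction relies on dependent random choice: sampling a set $T\subseteq V(G)$ of size $t$ uniformly at random and taking $U=\bigcap_{v\in T} N_G(v)$ produces a set of expected size about $2^{-t}N$ in which, for $t$ suitably chosen, every $d$-subset has many common $G$-neighbors. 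A layered use of this idea --- first producing a single large set $U$ with the common-neighborhood property, then iteratively carving $V_1,\dots,V_r$ out of $U$ with additional dependent-random-choice refinements to ensure the property is preserved across parts --- yields the desired $r$-partite structure.

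\textbf{Greedy embedding.} Fix a proper $r$-coloring $c:V(H)\to[r]$ and a $d$-degeneracy ordering $u_1,\dots,u_n$ of $V(H)$, so each $u_i$ has at most $d$ earlier neighbors. Embed $u_1,\dots,u_n$ in turn by choosing $\phi(u_i)\in V_{c(u_i)}$ to be a common $G$-neighbor of the images of the earlier neighbors of $u_i$ (there are at most $d$ of them, and they lie in $\bigcup_{j\ne c(u_i)} V_j$ since $c$ is a proper coloring) that has not yet been used. The embedding-structure property provides at least $n$ valid candidates, and since fewer than $n$ vertices of $V_{c(u_i)}$ have been used so far, the greedy step always succeeds.

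\textbf{Main obstacle.} The technical heart is producing the $r$-partite embedding structure with the claimed parameters. A single dependent random choice with sample $t$ in a density-$\tfrac12$ graph yields $|U|\sim 2^{-t}N$, while the union bound over the $\binom{N}{d}$ potentially bad $d$-subsets forces $t$ to grow roughly like $d\log N / \log(N/m)$ if one wants the surviving subsets to have common neighborhoods of size $\ge m$. Iterating this naively $r$ times in order to carve out $V_1,\dots,V_r$ would multiply $t$ by $r$ --- or worse, lead to a tower-type dependence --- so the challenge is to arrange the $r$ refinements so that their cumulative effect on $t$ is only $2^{O(r)}$. I expect this will force most of the work into one carefully-tuned global application, with only a low-cost local refinement per part. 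The hypothesis $n\ge 2^{d^2\,2^{cr}}$ should enter precisely to make $\log N$ large enough relative to the compounded $d\cdot 2^{cr}$ loss so that the dependent-random-choice inequalities close.
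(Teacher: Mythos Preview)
Your plan has a genuine gap at exactly the point you flag as the ``main obstacle,'' but the obstruction is not the dependence on $r$; it is the dependence on $n$. The embedding structure you propose --- disjoint $V_1,\dots,V_r$ of size $\ge n$ in which \emph{every} $d$-subset of $\bigcup_{j\ne i}V_j$ has $\ge n$ common neighbours in $V_i$ --- cannot be produced by dependent random choice inside a host of size $N=C(d,r)\,n$. A single DRC round with sample size $t$ in a density-$\tfrac12$ graph gives $|U|\approx 2^{-t}N$ while the expected number of bad $d$-tuples surviving in $U$ is at most $N^{d}(n/N)^{t}$. Forcing this below $1$ and simultaneously keeping $|U|\ge n$ leads to $\log(N/n)\gtrsim \sqrt{d\log n}$, i.e.\ $N\ge n\cdot 2^{\Omega(\sqrt{d\log n})}$. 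This is precisely the Fox--Sudakov barrier; iterating $r$ times only makes it worse. Since $n$ may be arbitrarily large while $N/n=2^{d2^{cr}}$ is fixed, your inequalities cannot close. Your greedy embedding along a degeneracy order needs \emph{every} relevant $d$-tuple to be good, so it cannot be salvaged by merely knowing that \emph{most} tuples are good.

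The paper sidesteps this barrier by never asking for all tuples to be good. Instead it introduces a defect weight $\omega_\theta(Q;T)=\theta/|N(Q;T)|$ on bad tuples and controls the \emph{$s$-th moment} $\mu_{s,\theta}$ of this defect over product sets; DRC with $t\ge s$ bounds these moments without any $\log n$ loss. The embedding is then \emph{randomized} rather than greedy: vertices are mapped into their admissible sets uniformly at random, and one shows (Lemma~\ref{lem:decouple}, Lemma~\ref{lem:defect_bound}) that the resulting distribution deviates from the uniform one by a factor controlled by $\mu_{4d}$ and $\gamma=\max_i |V_i|/\theta_i$. Crucially, $H$ is not processed along a single degeneracy order but via the layered decomposition of Lemma~\ref{lem:split} into pieces $W_i^{(j)}$ with $|W_i^{(j)}|\le 2^{-i+1}n$; the geometric decay of $|W_i|/\theta_i$ is what makes the sum in Theorem~\ref{thm:success} converge. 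None of these ingredients --- moment control, random embedding, layered decomposition --- appear in your outline, and each is essential to break the $2^{\sqrt{\log n}}$ barrier.
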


This settles the conjecture of Burr and Erd\H{o}s since all $d$-degenerate graphs
have chromatic number at most $d+1$. 
Moreover, for fixed values of $r$,
Theorem~\ref{thm:main} is best possible up to the constant in the exponent. 
To see this, consider a random graph $G$ on $(1-\varepsilon)2^{d}n$ vertices of density $\frac{1}{2}$
and let $H$ be the complete bipartite graph $K_{d,n-d}$ with $d$ vertices in one part
and $n-d$ vertices in the other part. It is well-known that in $G$, with high probability, 
every $d$-tuple of (distinct) vertices has fewer than $(1-\frac{\varepsilon}{2})n$ common neighbors. 
Therefore, $G$ does not contain a copy of $H$. 
Since the complement of $G$ can also be seen as a random graph of density $\frac{1}{2}$, 
we see that the complement of $G$ also does not contain a copy of $H$. 
Another way to see the tightness of Theorem~\ref{thm:main} is by considering
the construction of Graham, R\"odl, and Ruci\'nski \cite{GrRoRu01} mentioned above.

The {\em density} of a graph is defined as the fraction of pairs of distinct vertices that form an edge. 
Most of the previous results mentioned above in fact provide a density-embedding result 
for bipartite graphs, saying that every dense enough graph contains a copy.
Note that the Ramsey number result follows from such a density-embedding result
since in every edge two-coloring of a complete graph,
one of the colors must have density at least $\frac{1}{2}$.
In this context, the following theorem generalizes Theorem~\ref{thm:main} to a
density-embedding result.

\begin{thm} \label{thm:bipartite}
There exists a constant $c$ such that the following holds for every natural number $d$ and real number $\alpha \le \frac{1}{2}$.
For every natural number $n \ge \alpha^{-cd^2}$, if $G$ is a graph with at least $\alpha^{-cd} n$ vertices and density at least $\alpha$, 
then it is universal for the family of $d$-degenerate bipartite graphs on $n$ vertices.
\end{thm}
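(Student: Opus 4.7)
The strategy is to combine dependent random choice with a greedy embedding along a degeneracy ordering of $H$. Enumerate $V(H) = \{v_1, \ldots, v_n\}$ so that each $v_i$ has at most $d$ neighbors among $v_1, \ldots, v_{i-1}$; such an ordering exists because $H$ is $d$-degenerate. Suppose we can find a subset $U \subseteq V(G)$ of size at least $n$ with the property that for every $S \subseteq U$ with $|S| \le d$, $|N_G(S) \cap U| \ge n$. Then I embed $H$ into $U$ one vertex at a time: having mapped $v_1, \ldots, v_{i-1}$ to distinct $w_1, \ldots, w_{i-1} \in U$, set $T_i = \{w_j : j < i,\ v_j v_i \in E(H)\}$; then $T_i \subseteq U$ has size at most $d$, so $|N_G(T_i) \cap U| \ge n$ exceeds the $i - 1 < n$ already-used vertices, and a valid image $w_i$ can be chosen. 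This embedding never uses bipartiteness beyond the existence of the degeneracy order.

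The core of the argument is therefore the existence of such $U$ in every graph on $N \ge \alpha^{-cd} n$ vertices with density at least $\alpha$. The plan is to apply dependent random choice: pick a random subset $T \subseteq V(G)$ of size $t$ proportional to $d$, let $U_0 = N_G(T)$, and note that $\BBE|U_0| \ge \alpha^t N$ by convexity. Prune $U_0$ by deleting one vertex from each ``bad'' $d$-subset $S \subseteq U_0$ for which $|N_G(S) \cap U_0|$ falls below the required threshold. A first-moment calculation, using the identity $S \subseteq N_G(T) \Leftrightarrow T \subseteq N_G(S)$ and bounding the expected number of bad subsets by expressions of the form $\binom{N}{d}(m/N)^t$ for appropriate thresholds $m$, should show that a positive fraction of $U_0$ survives the pruning. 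The parameter $t$ is tuned to balance $|U_0| \gtrsim \alpha^t N \ge n$ against the pruning loss, and the hypothesis $n \ge \alpha^{-cd^2}$ is what makes the balance achievable in all regimes of interest.

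The central obstacle is that the condition on $U$ is relative to $U$ itself: standard dependent random choice controls $|N_G(S)|$ inside $V(G)$, whereas I need $|N_G(S) \cap U| \ge n$. I expect to handle this either by a two-stage dependent random choice, first obtaining $U_1$ with very large absolute common neighborhoods and then refining to $U \subseteq U_1$ by a second random sampling inside $U_1$, or by a more delicate single-shot first-moment argument that directly controls the joint event $\{S \subseteq N_G(T),\ |N_G(S) \cap N_G(T)| < n\}$ (the latter is tricky because the condition $|N_G(S) \cap N_G(T)| < n$ is itself $T$-dependent). Tracking the quantitative cost of this self-referential pruning --- in particular, ensuring that after the refinement both the size of $U$ and its common-neighborhood property remain usable --- is where the factor $d^2$ in the exponent of the hypothesis $n \ge \alpha^{-cd^2}$ comes from, and is the main technical burden of the proof.
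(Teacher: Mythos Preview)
Your approach has a fatal flaw: it would prove too much. You explicitly note that your embedding ``never uses bipartiteness beyond the existence of the degeneracy order,'' so if the set $U$ you describe existed, the same greedy argument would embed \emph{every} $d$-degenerate graph on $n$ vertices into $G$, not just bipartite ones. But the density hypothesis cannot force this: take $G = K_{N/2,N/2}$, which has density roughly $\tfrac12$ yet contains no triangle, hence no $2$-degenerate graph with an odd cycle. Concretely, in this $G$ any set $U$ meeting both sides contains a pair $u \in X$, $v \in Y$ with $N_G(\{u,v\}) = \emptyset$, while any $U$ contained in one side has $N_G(u) \cap U = \emptyset$ for every $u \in U$. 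So the self-referential common-neighbourhood property you want for $U$ is simply unattainable, and no amount of two-stage dependent random choice or careful first-moment bookkeeping will produce it.

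The paper's proof uses bipartiteness in an essential way. It first decomposes $H$ into a logarithmic number of layers $W_i^{(1)}, W_i^{(2)}$ respecting a fixed $2$-colouring (Lemma~\ref{lem:split}), then finds via two rounds of dependent random choice a pair $A_1, A_2 \subseteq V(G)$ with small \emph{average} defect in both directions (Lemma~\ref{lem:drc_both_prep}), and randomly partitions each into pieces $V_i^{(j)}$ (Lemma~\ref{lem:breakdownG}). Crucially, it does \emph{not} achieve the property that every $d$-tuple has many common neighbours; instead it controls higher moments $\mu_{s,\theta}$ of the defect function and runs a \emph{random} greedy embedding (Section~\ref{sec:embedding}) whose success probability is bounded via Theorem~\ref{thm:success}. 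The passage from ``most $d$-tuples are good'' to an actual embedding is the heart of the argument and is where the $d^2$ in the exponent genuinely arises; your sketch replaces this with a hoped-for ``all $d$-tuples are good'' structure that does not exist.
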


Note that the complete bipartite graph $K_{d,n-d}$ mentioned above has a specific structure.
Namely, every vertex on one side has bounded degree. Until now, even this special case,
bipartite graphs with one side of bounded degree, 
of the conjecture was open.
A result of Alon \cite{Alon} implies the case when the degrees are bounded by two.
The corresponding density-embedding result was proved by Fox and Sudakov \cite{FoSu09},
who suggested the general case as an interesting problem to examine. 
The theorem below addresses this special case of the conjecture and shows that 
we can improve Theorem~\ref{thm:bipartite} to get nearly best possible embedding results. 

\begin{thm} \label{thm:osb}
Let $d$ be a natural number and $\alpha, \varepsilon$ be positive real numbers 
satisfying $\alpha^{d(d-2)} \le \varepsilon < 1$. 
Let $G$ be a graph on $(1+\varepsilon)\alpha^{-d} n$ vertices of density at least $\alpha$.
Then $G$ is universal for the family of bipartite graphs $H$ on $n$ vertices with a 
vertex partition $W_1 \cup W_2$ where all vertices in $W_1$ have at most $d$ neighbors in $W_2$,
and $\frac{|W_2|^d}{|W_2|(|W_2|-1) \cdots (|W_2|-d+1)} \le 1 + \varepsilon$.
\end{thm}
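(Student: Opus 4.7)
The plan is to embed $H$ into $G$ in two stages: first choose an injection $\phi\colon W_2\to V(G)$ so that each common neighborhood $|N_G(\phi(N_H(v)))|$ ($v\in W_1$) is at least $n$, then extend to $W_1$ by Hall's marriage theorem. Indeed, under this condition, in the auxiliary bipartite graph between $W_1$ and $V(G)\setminus\phi(W_2)$ with $v\sim y$ iff $y\in N_G(\phi(N_H(v)))$, every left vertex has degree at least $n-|W_2|=|W_1|$, so Hall's condition is trivial and a perfect matching completes the embedding. Before proceeding, I note that the given condition $|W_2|^d/(|W_2|(|W_2|-1)\cdots(|W_2|-d+1))\le 1+\varepsilon<2$ forces $|W_2|\ge d$; padding $N_H(v)$ with dummy neighbors in $W_2$ for those $v\in W_1$ of degree less than $d$, I may assume every vertex of $W_1$ has exactly $d$ neighbors, since enlarging $N_H(v)$ only shrinks $N_G(\phi(N_H(v)))$.

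For a uniformly random injection $\phi$, the set $\phi(N_H(v))$ is a uniformly random $d$-subset of $V(G)$, so by convexity of $x(x-1)\cdots(x-d+1)$ together with $\bar d \ge \alpha(n_G-1)$,
\[
\BBE_\phi\,\bigl|N_G(\phi(N_H(v)))\bigr| \;=\; \frac{1}{\binom{n_G}{d}}\sum_{y\in V(G)}\binom{d_G(y)}{d} \;\ge\; \frac{n_G\,\bar d(\bar d-1)\cdots(\bar d-d+1)}{n_G(n_G-1)\cdots(n_G-d+1)}.
\]
An elementary expansion of this ratio and the identity $n_G=(1+\varepsilon)\alpha^{-d}n$ show that it is bounded below by $n_G\alpha^d(1-O(d^2/(\alpha n_G)))=(1+\varepsilon)n(1-O(d^2\alpha^{d-1}/n))$. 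The numerical hypothesis $\alpha^{d(d-2)}\le\varepsilon$ together with $|W_2|\ge d(d-1)/(2\varepsilon)$, which one extracts from the given constraint on $|W_2|$, imply $n\gtrsim d^2\alpha^{d-1}/\varepsilon$, so this lower bound simplifies to $(1+\Omega(\varepsilon))\,n$.

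The main obstacle is upgrading this average statement to a uniform bound over all $v\in W_1$ simultaneously. I would achieve this with a dependent random choice argument: sample $x_1,\dots,x_t\in V(G)$ independently and uniformly and set $U:=\bigcap_i N_G(x_i)$. Then $\BBE|U|\ge n_G\alpha^t$, while the expected number of \emph{bad} $d$-subsets of $U$ (those $S$ with $|N_G(S)|<n$) is at most $\binom{n_G}{d}(n/n_G)^t=\binom{n_G}{d}(\alpha^d/(1+\varepsilon))^t$. The delicate point is to choose $t$ so that $|U|-d\cdot(\#\text{bad})\ge|W_2|$: deleting one vertex per bad $d$-subset yields $U'\subseteq V(G)$ of size $\ge|W_2|$ with every $d$-subset good, and any injection $\phi\colon W_2\to U'$ then works. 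The hypothesis $\alpha^{d(d-2)}\le\varepsilon$ together with the slack from $|W_2|^d/(|W_2|(|W_2|-1)\cdots(|W_2|-d+1))\le 1+\varepsilon$ are precisely tuned to make this numerical balance achievable; carrying it out, possibly via an iterated or two-stage DRC rather than a single application, is the technical heart of the proof.
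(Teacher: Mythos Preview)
Your overall plan---inject $W_2$ first, then extend to $W_1$---is the right shape, but the dependent-random-choice step you sketch cannot close with these parameters. You are aiming for a set $U'$ of size at least $|W_2|$ in which \emph{every} $d$-subset has at least $n$ common neighbours. With $n_G=(1+\varepsilon)\alpha^{-d}n$, the standard DRC bound gives $\BBE[\,|U|-\#\text{bad}\,]\ge n_G\alpha^t-\binom{n_G}{d}(n/n_G)^t$. Write $t=d+\tau$: the first term equals $(1+\varepsilon)\alpha^\tau n$, so $\tau$ must stay $O(1)$ whenever $|W_2|=\Theta(n)$. But the second term is of order $n^d\alpha^{d\tau}/(d!(1+\varepsilon)^\tau)$, and making it smaller than $n$ forces $(1+\varepsilon)^\tau\gtrsim n^{d-1}$, hence $\tau\gtrsim(d-1)(\log n)/\varepsilon$. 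These two constraints on $\tau$ are incompatible for any $\varepsilon=o(\log n)$, in particular for $\varepsilon=\alpha^{d(d-2)}$. Iterating the DRC does not help, since each round still gains only a factor $(1+\varepsilon)$ against a polynomial in $n$; the hypotheses are not, as you suggest, tuned to this balance.

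The idea you are missing---and the point of the paper's argument---is to \emph{abandon} the requirement that all $d$-tuples be good. Define the defect $\omega_\theta(Q)=\theta/|N(Q)|$ when $|N(Q)|<\theta$ and $0$ otherwise, set $\theta=|W_1|$, and run DRC with the tiny value $t=2$ to obtain $A$ with $|A|\ge|W_2|$ and merely $\sum_{Q\in A^d}\omega_\theta(Q)\le\frac{1}{1+\varepsilon}|A|^d$. A uniformly random injection $\phi:W_2\to A$ then satisfies $\BBE\sum_{v\in W_1}\omega_\theta(\phi(N_H(v)))\le|W_1|$; this is exactly where the hypothesis $|W_2|^d/\bigl(|W_2|(|W_2|-1)\cdots(|W_2|-d+1)\bigr)\le1+\varepsilon$ enters. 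Fix such a $\phi$ and embed $W_1$ greedily in \emph{decreasing} order of $\omega_\theta(\phi(N_H(v)))$: at step $i$ the current vertex has defect at most $|W_1|/i$, hence at least $i$ common neighbours, so a fresh image is always available. No individual $d$-tuple need be good, and Hall's theorem is not used.
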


This theorem strengthens previous density-embedding results for bipartite graphs
with bounded maximum degree \cite{Conlon, FoSu09}, and
is close to being best possible as can be seen by the example where $G$ is a random graph
and $H$ is the complete bipartite graph $K_{d,n-d}$ discussed above. 
Let $Q_n$ be the {\em hypercube}, which is the graph 
with vertex set $\{0,1\}^{n}$ where two vertices are adjacent
if and only if they differ in exactly one coordinate. 
Theorem~\ref{thm:osb} with $\varepsilon = \frac{n^2}{2^n}$ and $\alpha = \frac{1}{2}$ shows that
$r(Q_n) \le 2^{2n} + n^2 2^n$ holds for all sufficiently large $n$.
This bound improves by a constant factor, 
the current best known bound $r(Q_n) \le 2^{2n+6}$
of Conlon, Fox, and Sudakov \cite{CoFoSu_ext}, proved using a different approach 
based on the local lemma.
It is conjectured \cite{BuEr} that there exists a constant $c$ such that 
$r(Q_n) \le c 2^n$ for all $n$. 

The proofs of the three theorems above are based on dependent random choice 
and build upon several ideas developed for earlier applications.
While the proofs of Theorems~\ref{thm:main} and \ref{thm:bipartite} are technically involved,
Theorem~\ref{thm:osb} has a short proof that highlights one of the main 
differences in our usage of this technique as compared to the previous ones.
We thus start by presenting the proof of Theorem~\ref{thm:osb} in Section~\ref{sec:one_side_bounded}. 
In Section~\ref{sec:preliminaries}, we introduce some central concepts and notation and give a 
brief outline of the proofs of our main theorems. 
In Section~\ref{sec:embedding}, we develop the embedding strategy 
that will be used. The proofs of Theorems~\ref{thm:main} and \ref{thm:bipartite} will be
given in Section~\ref{sec:proof_main} except for one key lemma, which will be proved in 
Section~\ref{sec:pruning}.
We conclude with some remarks in Section~\ref{sec:remarks}.

\medskip

\noindent \textbf{Notation}. 
For an integer $m$, define $[m] := \{1,2,\ldots, m\}$ and for a pair of integers $m_1, m_2$,
define $[m_1, m_2] := \{m_1, \cdots, m_2\}$, $[m_1, m_2) := \{m_1, \cdots, m_2-1\}$,
$(m_1, m_2] := \{m_1 + 1, \cdots, m_2\}$, and $(m_1, m_2) := \{m_1+1, \cdots, m_2-1\}$.
For a set of elements $X$, we define $X^t = X \times \cdots \times X$ as the set of all $t$-tuples in $X$.
Let $Q$ be a $d$-tuple and $Q'$ be a $d'$-tuple of elements in some set.
We use $Q \cup Q'$ to denote the $(d+d')$-tuple obtained by concatenating $Q'$ to the 
end of $Q$.  

Let $G=(V,E)$ be an $n$-vertex graph.
For a vertex $x$ and a set $T$, we define $\deg(x; T)$ as the number of neighbors of $x$ in $T$, 
and define $\deg(x) := \deg(x; V)$.
For a set or an ordered tuple of vertices $Q$, define 
$N(Q;T) := \{ x \in T : \{x,y\} \in E,\,\forall y \in Q \}$ as the set of common neighbors in $T$ 
of vertices in $Q$. Define $N(Q) := N(Q; V)$.
For two sets $X$ and $Y$, define $E(X,Y) = \{(x,y) \in X \times Y \,:\, \{x,y\} \in E\}$
and $e(X,Y) = |E(X,Y)|$. Furthermore, define $e(X) = \frac{1}{2}e(X,X)$ as the number of 
edges in $X$. Let $d(X,Y) = \frac{e(X,Y)}{|X||Y|}$ be the density of edges between
$X$ and $Y$.
For a graph $H$, an {\em embedding} of $H$ in $G$ is an injective map $f: V(H) \rightarrow V(G)$
for which $\{f(v), f(w)\} \in E(G)$ whenever $\{v,w\} \in E(H)$.
A {\em partial embedding of $H$ in $G$ defined on $V' \subseteq V(H)$} is an embedding of
$H[V']$ into $G$. For $V'' \subseteq V(H)$ an {\it extension to $V''$} of a partial 
embedding $f$ on $V'$ is an embedding $g : H[V' \cup V''] \rightarrow G$ such that $g|_{V'} = f$.
We often abuse notation and denote the extended map using the same notation $f$.

Throughout the paper, we will be using subscripts such as in $x_{\ref{thm:main}}$ to indicate 
that $x$ is the constant coming from Theorem/Lemma/Proposition 1.1.

\section{One-side bounded bipartite graphs} \label{sec:one_side_bounded}

Fox and Sudakov \cite{FoSu09} asked if the Burr-Erd\H{o}s conjecture holds for bipartite graphs where 
all vertices in one part have their degree bounded by $d$.
In this section, we answer their question by
establishing an embedding theorem for such graphs. 

\begin{dfn} \label{dfn:defect_simple}
Let $G$ be a bipartite graph with vertex partition $V_1 \cup V_2$.
For a positive real number $\theta$ 
and an ordered $d$-tuple $Q \in V_2^d$, we define the {\em $\theta$-defect of $Q$} as
\[
	\omega_{\theta}(Q) = 
	\begin{cases}
	 0 	& \textrm{ if } |N(Q)| \ge \theta \\
	 \frac{\theta}{|N(Q)|} & \textrm{otherwise}.
	\end{cases}
\]
We simply write $\omega(Q)$ when $\theta$ is clear from the context.
\end{dfn}

For a given ordered $d$-tuple of vertices $Q$,
the $\theta$-defect $\omega_\theta(Q)$ defined above captures how the number of common neighbors of $Q$
compares to some prescribed threshold $\theta$. 
We give a penalty to the $d$-tuples having only a small number of common neighbors.
Informally, if a set has small average $\theta$-defect over $d$-tuples, then
most $d$-tuples in it will have at least $\theta$ common neighbors.
This weight function has been considered before by Alon, Krivelevich, and Sudakov \cite{AlKrSu} 
with a fixed value of the threshold $\theta$. 
The following lemma, based on dependent random choice, shows that we can find a set in which the 
average defect is small.

\begin{lem} \label{lem:drc_simple}
Let $d, t$ be natural numbers, $s$ be a non-negative integer satisfying $t \ge s$, and
$\eta, \varepsilon$ be positive real numbers satisfying $\varepsilon < 1$.
Let $G$ be a bipartite graph of density at least $\alpha$ with vertex 
partition $V_1 \cup V_2$. 
Then there exists a set $A \subseteq V_2$ of size at least $|A| \ge \varepsilon^{1/d} \alpha^{t}|V_2|$
such that $\frac{1}{|A|^d}\sum_{Q \in A^d} \omega_{\theta}(Q)^{s} \le \frac{\eta^{t}}{1-\varepsilon}$ 
for all $\theta \le \eta \alpha^{d}|V_1|$.
\end{lem}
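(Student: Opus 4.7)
The plan is a single clean application of dependent random choice with a carefully designed weight. Sample $T=(u_1,\ldots,u_t)$ uniformly at random from $V_1^t$ (with replacement) and let $A := N(T) \cap V_2$. Both the size $|A|$ and the weighted sum $Y_\theta := \sum_{Q \in A^d}\omega_\theta(Q)^s$ are random variables, and the lemma will follow by lower-bounding $\mathbb{E}[|A|^d]$, upper-bounding $\mathbb{E}[Y_\theta]$, and combining the two by linearity. Since $\omega_\theta(Q)$ is non-decreasing in $\theta$, it suffices to handle the extremal threshold $\theta^* := \eta\alpha^d|V_1|$; the bound for every smaller $\theta$ then comes for free.

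For the size, the standard indicator trick gives $\mathbb{E}[|A|^d] = \sum_{Q \in V_2^d}(|N(Q)|/|V_1|)^t$. I would then apply Jensen's inequality twice: once to the average over $Q \in V_2^d$ (using convexity of $x \mapsto x^t$), and once to the identity $\mathbb{E}_Q[|N(Q)|] = \sum_{u \in V_1}(\deg(u)/|V_2|)^d$ (using convexity of $x \mapsto x^d$ together with the density bound $\sum_u \deg(u) \ge \alpha|V_1||V_2|$). The two steps combine to give $\mathbb{E}[|A|^d] \ge \alpha^{dt}|V_2|^d$.

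For the defect, only tuples $Q$ with $|N(Q)| < \theta^*$ contribute, and the expected contribution of each such $Q$ is
\[
\left(\frac{\theta^*}{|N(Q)|}\right)^s\left(\frac{|N(Q)|}{|V_1|}\right)^t = \frac{(\theta^*)^s\,|N(Q)|^{t-s}}{|V_1|^t} \le \left(\frac{\theta^*}{|V_1|}\right)^t \le \eta^t\alpha^{dt},
\]
where the first inequality uses both $t \ge s$ and $|N(Q)| < \theta^*$, and the second uses the choice of $\theta^*$. Summing over at most $|V_2|^d$ tuples yields $\mathbb{E}[Y_{\theta^*}] \le \eta^t\alpha^{dt}|V_2|^d$.

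To finish, I combine the two estimates via the linear combination $|A|^d - \frac{1-\varepsilon}{\eta^t}Y_{\theta^*}$: linearity of expectation gives
\[
\mathbb{E}\!\left[|A|^d - \tfrac{1-\varepsilon}{\eta^t}Y_{\theta^*}\right] \;\ge\; \alpha^{dt}|V_2|^d - (1-\varepsilon)\alpha^{dt}|V_2|^d \;=\; \varepsilon\alpha^{dt}|V_2|^d \;>\; 0,
\]
so some realisation of $T$ simultaneously satisfies $|A|^d \ge \varepsilon\alpha^{dt}|V_2|^d$ (equivalently $|A| \ge \varepsilon^{1/d}\alpha^t|V_2|$) and $Y_{\theta^*} \le \frac{\eta^t}{1-\varepsilon}|A|^d$. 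Monotonicity in $\theta$ then promotes this to the whole range $\theta \le \eta\alpha^d|V_1|$. There is no real obstacle here: the two delicate points are using $t \ge s$ in the correct direction inside the defect estimate, and ensuring the worst $\theta$ dominates; both are handled by the observations above.
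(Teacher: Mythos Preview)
Your argument is correct and is essentially identical to the paper's own proof: the same random $t$-tuple in $V_1^t$, the same monotonicity reduction to $\theta^* = \eta\alpha^d|V_1|$, the same defect estimate via $t \ge s$, and the same linear combination $|A|^d - \frac{1-\varepsilon}{\eta^t}Y_{\theta^*}$ to extract a good realisation. The only cosmetic difference is that you bound $\mathbb{E}[|A|^d]$ by first passing through $\mathbb{E}_Q[|N(Q)|]$ and applying Jensen twice, whereas the paper first bounds $\mathbb{E}[|A|]$ and then uses $\mathbb{E}[|A|^d] \ge (\mathbb{E}[|A|])^d$; both routes yield the same $\alpha^{dt}|V_2|^d$.
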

\begin{proof}
Since $\omega_{\theta}$ is increasing in $\theta$, it suffices to prove the lemma
when $\theta = \eta\alpha^{d}|V_1|$. 
Throughout the proof we will use this fixed value of $\theta$, and use
the notation $\omega_\theta$ without the subscript. 
Let ${\bf X} \in V_1^t$ be a $t$-tuple of vertices chosen uniformly at random 
(where we allow repetition of vertices), and define
${\bf A} = N({\bf X})$. Note that
\[
	\BBE\left[|{\bf A}|\right]
	= \sum_{v \in V_2} \BFP(v \in N({\bf X}))
	= \sum_{v \in V_2} \left( \frac{\deg(v)}{|V_1|} \right)^t
	\ge |V_2| \left( \frac{1}{|V_2|}\sum_{v \in V_2}  \frac{\deg(v)}{|V_1|} \right)^t
	\ge \alpha^t |V_2|.
\]
By convexity, we have $\BBE\left[|{\bf A}|^d\right] \ge \alpha^{dt} |V_2|^d$.
Fix a $d$-tuple $Q \in V_2^d$ and let $|N(Q)| = \gamma \theta$. Since
\[
	\BFP\left( Q \in {\bf A}^{d} \right)
	= \left( \frac{|N(Q)|}{|V_1|} \right)^{t}
	= \left( \frac{\gamma \eta \alpha^d|V_1|}{|V_1|} \right)^{t}
	= \gamma^t \eta^t \alpha^{dt},
\]
if $\gamma < 1$, then $\omega(Q)^s \cdot \BFP\left( Q \in {\bf A}^{d} \right) = \gamma^{t-s} \eta^{t}\alpha^{dt} \le \eta^{t}\alpha^{dt}$ (we used the fact that $t \ge s$). 
On the other hand, if $\gamma \ge 1$, then $\omega(Q) = 0$ by definition.
Hence,
\[
	\BBE\left[\sum_{Q \in {\bf A}^{d}} \omega(Q)^s\right] 
	= \sum_{Q \in V_2^d} \omega(Q)^s \cdot \BFP\left( Q \in {\bf A}^{d} \right)
	\le |V_2|^d \cdot \eta^{t} \alpha^{dt}.
\]
Therefore,
\[
	\BBE\left[|{\bf A}|^d - \frac{1-\varepsilon}{\eta^t} \sum_{Q \in {\bf A}^{d}} \omega(Q)^s\right]
	\ge \varepsilon \alpha^{dt} |V_2|^d.
\]
Choose ${\bf X}$ so that the random variable on the left-hand side
of the inequality above becomes at least as large as its expectation and let $A$ be
${\bf A}$ for this choice of vertices.  
Then $|A|^d \ge \varepsilon \alpha^{dt}|V_2|^d$, and thus $|A| \ge \varepsilon^{1/d} \alpha^{t}|V_2|$ .
The other claim follows from 
$|A|^d - \frac{1-\varepsilon}{\eta^t}\sum_{Q \in A^d} \omega(Q)^s \ge 0$.
\end{proof}

We now prove a density-embedding theorem for bipartite graphs with one part having
bounded maximum degree. Theorem~\ref{thm:osb}  follows from the theorem below by taking
$\varepsilon \ge \alpha^{d(d-2)}$ since it implies 
$(1+\varepsilon)\alpha^{-d} \ge (\frac{1+\varepsilon}{\varepsilon})^{1/d}\alpha^{-2}$, and in every graph on $n = n_1 + n_2$ vertices of density $\alpha$,
we can find a bipartite subgraph with $n_1$ and $n_2$ vertices in each part, having density 
at least $\alpha$. 

\begin{thm} \label{thm:one_side_bounded}
Let $\varepsilon$ be a positive real number.
Let $H$ be a bipartite graph on $n$ vertices with a 
vertex partition $W_1 \cup W_2$ where all vertices in $W_1$ have at most $d$ neighbors in $W_2$. 
Let $G$ be a bipartite graph of density at least $\alpha$ with vertex 
partition $V_1 \cup V_2$ where $|V_1| \ge (1+\varepsilon)\alpha^{-d} |W_1|$ and $|V_2| \ge (\frac{1+\varepsilon}{\varepsilon})^{1/d}\alpha^{-2}|W_2|$. 
If $\frac{|W_2|^d}{|W_2|(|W_2|-1)\cdots(|W_2|-d+1)} \le 1 + \varepsilon$, 
then $G$ contains a copy of $H$.
\end{thm}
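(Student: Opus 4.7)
The plan is to apply dependent random choice via Lemma~\ref{lem:drc_simple} to select a well-behaved subset $A \subseteq V_2$, embed $W_2$ into $A$ by a uniformly random injection, and then appeal to Hall's marriage theorem. Concretely, I would invoke Lemma~\ref{lem:drc_simple} with parameters $t = 2$, $s = 1$, $\eta = 1/(1+\varepsilon)$, and with its $\varepsilon$-argument set to $\varepsilon' := \varepsilon/(1+\varepsilon)$. These choices are calibrated so that $\eta\alpha^d|V_1| \geq |W_1|$ (using the hypothesis $|V_1| \geq (1+\varepsilon)\alpha^{-d}|W_1|$), which lets the defect bound apply at threshold $\theta = |W_1|$, while $\varepsilon' = \varepsilon/(1+\varepsilon)$ combined with the hypothesis on $|V_2|$ yields $|A| \geq |W_2|$ together with the defect bound
\[
\frac{1}{|A|^d}\sum_{R \in A^d}\omega_{|W_1|}(R) \leq \frac{\eta^2}{1-\varepsilon'} = \frac{1}{1+\varepsilon}.
\]

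Next, for each $w_1 \in W_1$ I would extend $N_H(w_1)$ arbitrarily to a $d$-subset $\tilde{N}(w_1) \subseteq W_2$ and sample a uniformly random injection $\phi : W_2 \to A$, so that $\phi(\tilde{N}(w_1))$ is a uniformly random ordered $d$-tuple of distinct elements of $A$. Transferring the $A^d$-average of $\omega_{|W_1|}$ to this distribution costs a factor of $|A|^d/\bigl(|A|(|A|-1)\cdots(|A|-d+1)\bigr)$, which is bounded by $1+\varepsilon$ via the theorem's third hypothesis combined with $|A| \geq |W_2|$ and the monotonicity of the ratio in $|A|$. Hence $\BBE[\omega_{|W_1|}(\phi(\tilde{N}(w_1)))] \leq 1$ for every $w_1$, so $\BBE\bigl[\sum_{w_1}\omega_{|W_1|}(\phi(\tilde{N}(w_1)))\bigr] \leq |W_1|$ and some realization of $\phi$ achieves $\sum_{w_1}\omega_{|W_1|}(\phi(\tilde{N}(w_1))) \leq |W_1|$.

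Fix such $\phi$ and consider the bipartite auxiliary graph $\Gamma$ on $W_1 \cup V_1$ with $w_1 \sim v$ iff $v \in T_{w_1} := N(\phi(\tilde{N}(w_1)))$. A short contradiction then shows that $\sum_{w_1}\omega_{|W_1|}(\phi(\tilde{N}(w_1))) \leq |W_1|$ implies Hall's condition on $\Gamma$: if Hall failed with $|S| = k$ and $|N_\Gamma(S)| = s < k$, every $w_1 \in S$ would have $|T_{w_1}| \leq s$, so $\omega_{|W_1|}(\phi(\tilde{N}(w_1))) \geq |W_1|/s$, forcing $\sum_{w_1 \in S}\omega_{|W_1|} \geq k|W_1|/s > |W_1|$. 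By Hall's theorem, $\Gamma$ admits a $W_1$-saturating matching $\psi : W_1 \to V_1$ with $\psi(w_1) \in T_{w_1} \subseteq N(\phi(N_H(w_1)))$, so $(\psi,\phi)$ is an embedding of $H$ in $G$.

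I expect the main obstacle to be the delicate balance of constants rather than any genuinely new idea: the choices of $\eta$ and $\varepsilon'$ sit at their tight extremes, and each of the three key inequalities --- $|A| \geq |W_2|$, $|A|^d/\bigl(|A|(|A|-1)\cdots(|A|-d+1)\bigr) \leq 1+\varepsilon$, and $\BBE[\omega_{|W_1|}(\phi(\tilde{N}(w_1)))] \leq 1$ --- becomes an equality precisely when the theorem's hypotheses are at their boundary. Any other choice of $\eta$ or $\varepsilon'$ would either shrink $A$ below $|W_2|$ or push the expectation above $|W_1|$, so the principal care is in checking that these tightnesses align rather than in inventing a new technique.
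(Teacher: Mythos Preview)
Your proof is correct and follows the paper's argument almost verbatim: the same parameters $t=2$, $s=1$, $\eta=\frac{1}{1+\varepsilon}$, $\varepsilon'=\frac{\varepsilon}{1+\varepsilon}$ in Lemma~\ref{lem:drc_simple}, the same random injection $W_2\to A$, and the same expectation bound $\sum_{w_1}\omega_{|W_1|}(\phi(\tilde N(w_1)))\le|W_1|$. The only difference is the final extension to $W_1$: the paper orders $W_1$ by decreasing $\omega(\phi(e_{v_i}))$ and observes that $i\cdot\omega(\phi(e_{v_i}))\le|W_1|$ forces $|N(\phi(e_{v_i}))|\ge i$, allowing a direct greedy embedding, whereas you invoke Hall's theorem --- these are equivalent arguments, the greedy ordering being an explicit witness for Hall's condition.
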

\begin{proof}
Let $\theta = |W_1|$ and note that $\theta \le \frac{1}{1+\varepsilon}\alpha^{d}|V_1|$.
Throughout the proof we will use $\omega$ without subscript with the understanding 
that $\omega = \omega_\theta$.
Apply Lemma~\ref{lem:drc_simple} with $t_{\ref{lem:drc_simple}} = 2$, $s_{\ref{lem:drc_simple}} = 1$,
$d_{\ref{lem:drc_simple}} = d$,
$\eta_{\ref{lem:drc_simple}} = \frac{1}{1+\varepsilon}$, and $\varepsilon_{\ref{lem:drc_simple}} = \frac{\varepsilon}{1+\varepsilon}$
to find a set $A \subseteq V_2$ of size 
$|A| \ge (\frac{\varepsilon}{1+\varepsilon})^{1/d}\alpha^2|V_2| \ge |W_2|$ for which 
$\sum_{Q \in A^d} \omega(Q) \le (\frac{1}{1+\varepsilon})^2 \frac{1}{1-\varepsilon/(1+\varepsilon)}|A|^d
= \frac{1}{1+\varepsilon}|A|^d$.
By adding edges if necessary, we may assume that all vertices in $W_1$ have
exactly $d$ neighbors in $W_2$. 
Let $\phi$ be an injective map from $W_2$ to $A$ chosen uniformly at random.
For each vertex $v \in W_1$, let $e_v$ be an
ordered $d$-tuple of vertices obtained from $N(v)$ by arbitrarily ordering the vertices. 
Note that
\begin{align*}
	\BBE\left[ \sum_{v \in W_1} \omega(\phi(e_v)) \right]
	= \sum_{v \in W_1} \BBE\left[ \omega(\phi(e_v)) \right]
	=&\, \sum_{v \in W_1} \sum_{Q \in A^d}\omega(Q) \BFP(\phi(e_v)=Q) \\
	\le&\, |W_1| \cdot  \sum_{Q \in A^d} \frac{\omega(Q)}{|A|(|A|-1) \cdots (|A|-d+1)}.
\end{align*}
Since $\sum_{Q \in A^d} \omega(Q) \le \frac{1}{1+\varepsilon}|A|^d$ and $|A|\ge |W_2|$,
\[
	\BBE\left[ \sum_{v \in W_1} \omega(\phi(e_v)) \right] 
	\le |W_1| \cdot \frac{1}{1+\varepsilon}\frac{|W_2|^d}{|W_2|(|W_2|-1)\cdots(|W_2|-d+1)}\le |W_1|.
\]	
Therefore, there exists a particular choice of $\phi$ such that $\sum_{v \in W_1} \omega(\phi(e_v)) \le |W_1|$. 
Note that $\phi$ is trivially a partial embedding of $H$ in $G$ defined on $W_2$. 
We will now extend $\phi$ to $W_1$. 
Order the vertices in $W_1$ as $v_1, \cdots, v_{n_1}$
in decreasing order of $\omega(\phi(e_{v_i}))$ (where ties are broken arbitrarily).
We will greedily embed the vertices of $W_1$ following this order.
Suppose that we extended $\phi$ to $\{v_1, \cdots, v_{i-1}\}$, and for simplicity define $e_i = e_{v_i}$. 
First assume that $\omega(\phi(e_{i})) \neq 0$. 
Note that $i \cdot \omega(\phi(e_{i})) \le \sum_{v \in W_1} \omega(\phi(e_v)) \le |W_1|$ and therefore $\omega(\phi(e_{i})) \le \frac{|W_1|}{i}$. 
Hence,
\[
	|N(\phi(e_{i}))| = \frac{\theta}{\omega(\phi(e_{i}))} \ge |W_1| \cdot \frac{i}{|W_1|} = i. 
\]
Since we have so far embedded at most $i-1$ vertices of $W_1$, we can define $\phi(v_i)$
as a vertex in $N(\phi(e_{i})) \setminus \{\phi(v_1), \cdots, \phi(v_{i-1})\}$. 
On the other hand, if $\omega(\phi(e_{i})) = 0$, then $|N(\phi(e_{i}))| \ge \theta \ge |W_1|$
and therefore we trivially have $N(\phi(e_{i})) \setminus \{\phi(v_1), \cdots, \phi(v_{i-1})\} \neq \emptyset$
and can define $\phi(v_i)$ as a vertex in this set. Thus, we can find an embedding of $H$ in $G$.
\end{proof}

\section{Preliminaries} \label{sec:preliminaries}

\subsection{Decomposing $H$}

We start with the following simple lemma that decomposes a given degenerate graph into manageable pieces.

\begin{lem} \label{lem:split}
Let $H$ be an $r$-colorable $d$-degenerate graph on $n$ vertices. 
Then there exist a natural number $k$ and disjoint subsets $\{W_i^{(j)}\}_{i \in [k], j \in [r]}$
with the following properties:
\begin{itemize}
  \setlength{\itemsep}{1pt} \setlength{\parskip}{0pt}
  \setlength{\parsep}{0pt}
\item[(i)] $k \le \log_{2} n$,
\item[(ii)] for all $(i,j) \in [k] \times [r]$, $|W_{i}^{(j)}| \le 2^{-i+1}n$,
\item[(iii)] for all $j \in [r]$, the set $\bigcup_{i \in [k]} W_{i}^{(j)}$ is an independent set, and 
\item[(iv)] for all $(i,j) \in [k] \times [r]$, each vertex in $W_i^{(j)}$ has at most $4d$ neighbors in
$\bigcup_{i'\ge i, j' \in [r]} W_{i'}^{(j')}$.
\end{itemize}
\end{lem}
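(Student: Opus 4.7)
The plan is to run an iterated degeneracy peeling on $H$ together with a fixed proper $r$-coloring. Set $V_0 = V(H)$, and for $i \ge 1$ define
\[
L_i = \{v \in V_{i-1} : \deg_{H[V_{i-1}]}(v) \le 4d\}, \qquad V_i = V_{i-1} \setminus L_i.
\]
Since $H[V_{i-1}]$ inherits $d$-degeneracy, it has at most $d|V_{i-1}|$ edges, so its average degree is at most $2d$, and a simple counting argument shows that strictly fewer than half of the vertices of $V_{i-1}$ have degree exceeding $4d$. This gives $|V_i| < |V_{i-1}|/2$, and iterating yields $|V_i| \le n/2^i$. Moreover, once $|V_{i-1}| \le 4d+1$ every vertex of $H[V_{i-1}]$ automatically has degree at most $4d$, so $L_i = V_{i-1}$ and the peeling halts. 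Combining these two observations, the number of rounds $k$ until $V_k = \emptyset$ satisfies $k \le \log_2 n$ (for $d \ge 1$).

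Next, fix a proper $r$-coloring of $H$ with color classes $C_1, \ldots, C_r$, and set $W_i^{(j)} = L_i \cap C_j$ for each $(i,j) \in [k] \times [r]$. These sets are pairwise disjoint because the $L_i$ partition $V(H)$ and the $C_j$ are themselves disjoint. Property (iii) is immediate, since $\bigcup_i W_i^{(j)} \subseteq C_j$ is an independent set. Property (ii) follows from $W_i^{(j)} \subseteq V_{i-1}$ combined with $|V_{i-1}| \le n/2^{i-1} = 2^{-i+1} n$. For property (iv), note that
\[
\bigcup_{i' \ge i,\, j' \in [r]} W_{i'}^{(j')} \;=\; \bigcup_{i' \ge i} L_{i'} \;=\; V_{i-1},
\]
so a vertex $v \in W_i^{(j)} \subseteq L_i$ has at most $\deg_{H[V_{i-1}]}(v) \le 4d$ neighbors in this union, directly by the definition of $L_i$.

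The whole argument is a standard degeneracy-peeling reduction to color classes, and there is no real obstacle to overcome. The only mildly delicate point is landing property (i) as $k \le \log_2 n$ on the nose rather than $\lceil \log_2 n \rceil$, and this is precisely what the $|V_{i-1}| \le 4d+1$ stopping observation handles: it saves the final round and eliminates the ceiling term.
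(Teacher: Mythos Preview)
Your proof is correct and follows essentially the same approach as the paper: iteratively peel off low-degree vertices (the paper keeps $U_{i+1}$ as the high-degree core with threshold $\ge 4d$, you keep $V_i$ with threshold $>4d$, a cosmetic difference), then intersect the resulting layers with a fixed proper $r$-coloring. Your extra remark that the process halts once $|V_{i-1}|\le 4d+1$ is a nice touch that the paper does not make explicit, but otherwise the two arguments are the same.
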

\begin{proof}
Let $U_1= V(H)$. For each $i \ge 1$, define $U_{i+1} \subseteq U_i$ as the set of vertices
having degree at least $4d$ in the subgraph $H[U_i]$. 
Since $H$ is $d$-degenerate, there are at most $d|U_i|$ edges in the subgraph $H[U_i]$. 
Therefore, we have $4d |U_{i+1}| \le 2d|U_i|$, forcing $|U_{i+1}| \le \frac{1}{2}|U_i|$.
Since $H$ has $n$ vertices, we have $|U_i| \le 2^{-(i-1)}n$ for each $i\ge1$, 
and the process continues for $k \le \log_{2} n$ steps.
Define $W_i = U_i \setminus U_{i+1}$ for all $i \in [k]$. 
Consider a proper $r$-coloring of $H$ using $[r]$, and for each $j \in [r]$, 
define $W_i^{(j)}$ as the set of vertices of color $j$ in $W_i$. 
One can easily check that all the conditions hold. 
\end{proof}

\subsection{Defect and average defect}

We will work with the decomposition given by Lemma~\ref{lem:split}. 
The main objective is to find a partition $\{V_{i}^{(j)}\}$ of the vertex set of the host graph $G$
so that we can embed $W_{i}^{(j)}$ to $V_{i}^{(j)}$ piece by piece. 
As in the previous section, the existence of such an embedding depends on the `average defect'
of this partition. In this subsection, we introduce the central concepts and notation
related to the defect function. 
We slightly generalize Definition~\ref{dfn:defect_simple} so that we consider 
the number of common neighbors in a specific set.

\begin{dfn} \label{dfn:defect}
Let $G$ be a graph.
For a positive real number $\theta$, a set $T \subseteq V(G)$,
and an ordered tuple $Q$ of vertices, we define the {\em $\theta$-defect of $Q$ in $T$} as
\[
	\omega_{\theta}(Q; T) = 
	\begin{cases}
	 0 	& \textrm{ if } |N(Q; T)| \ge \theta \\
	\frac{\theta}{|N(Q;T)|} & \textrm{otherwise}.
	\end{cases}
\]
We may simply write $\omega(Q; T)$ when $\theta$ is clear from the context. 
\end{dfn}

One can easily check that monotonicity $\omega_{\theta}(Q; T) \le \omega_{\theta'}(Q'; T')$ holds for all
$\theta \le \theta'$, $Q \subseteq Q'$ (as sets), and $T \supseteq T'$. 
Note that in the previous subsection, we utilized Lemma~\ref{lem:drc_simple} only
for the case $s=1$, even though Lemma~\ref{lem:drc_simple} provided a variety of bounds.
From now on, it will be crucial to consider other values of $s$.

\begin{dfn}
Let $G$ be a graph and $d, s$ be natural numbers. 
For a positive real number $\theta$, a set $T \subseteq V(G)$, and a set $\mathcal{Q} \subseteq V(G)^d$,
we define the {\em $s$-th moment of the $\theta$-defect of $\mathcal{Q}$ in $T$} as
\[
	\mu_{s,\theta}(\mathcal{Q}; T) = \frac{1}{|\mathcal{Q}|} \sum_{Q \in \mathcal{Q}} \omega_{\theta}(Q; T)^s.
\]
We may simply write $\mu_s(\mathcal{Q}; T)$ when $\theta$ is clear from the context. 
We extend the definition to $s=0$ by defining $\omega_{\theta}(Q;T)^0$ to be 0 when $\omega_{\theta}(Q;T)=0$, and 1 otherwise.
\end{dfn}

The monotonicity of the defect function $\omega$ and the fact that it equals either 0 or
a real number at least 1 implies
$\mu_{s,\theta}(\mathcal{Q}; T) \le \mu_{s',\theta'}(\mathcal{Q}; T')$ for
$s \le s'$, $\theta \le \theta'$, and $T \supseteq T'$.
Note that the conclusion of Lemma~\ref{lem:drc_simple} can be re-written as
an upper bound on $\mu_{s,\theta}(A^{d}; V_1)$.
Most previous applications of dependent random choice were proved by controlling
the $0$-th moment of the $\theta$-defect function, which equals
the probability that a random uniform $d$-tuple in $\mathcal{Q}$ has
fewer than $\theta$ common neighbors, whereas here we will be considering higher moments.
We will focus on the cases when $\mathcal{Q} = A_1 \times \cdots \times A_d$ for some 
(not necessarily distinct) sets $A_i$. The following proposition provides a useful
relation between average defects of such product sets.

\begin{prop} \label{prop:productsets}
If $A_1, \cdots, A_d, T$ are vertex subsets,
then $\mu_{s,\theta}(\prod_{i=1}^{d-1}A_i; T) \le \mu_{s,\theta}(\prod_{i=1}^{d}A_i; T)$.
\end{prop}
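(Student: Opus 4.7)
The plan is to derive the inequality pointwise in the first $d-1$ coordinates, exploiting the stated monotonicity of $\omega_{\theta}(\,\cdot\,;T)$ in its tuple argument, and then average.

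First I would fix an arbitrary $(d-1)$-tuple $Q' = (a_1,\ldots,a_{d-1}) \in \prod_{i=1}^{d-1} A_i$. For any $a \in A_d$, the $d$-tuple $Q = Q' \cup (a)$ contains $Q'$ as a set, so $N(Q;T) \subseteq N(Q';T)$, and the monotonicity recorded right after Definition~\ref{dfn:defect} gives $\omega_{\theta}(Q';T) \le \omega_{\theta}(Q;T)$. Since both sides are nonnegative, the inequality is preserved under the $s$-th power for any $s \ge 1$; for $s=0$ it still holds by the special convention $\omega_{\theta}(\,\cdot\,;T)^0 \in \{0,1\}$, because $\omega_{\theta}(Q';T) = 0$ forces $\omega_{\theta}(Q';T)^0 = 0$ while the right-hand side is always at least $0$. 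Thus
\[
    \omega_{\theta}(Q';T)^s \;\le\; \omega_{\theta}(Q' \cup (a); T)^s \qquad \text{for every } a \in A_d.
\]

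Averaging this bound over $a \in A_d$ gives
\[
    \omega_{\theta}(Q';T)^s \;\le\; \frac{1}{|A_d|} \sum_{a \in A_d} \omega_{\theta}(Q' \cup (a);T)^s,
\]
and then averaging over $Q' \in \prod_{i=1}^{d-1} A_i$ and using the fact that the $d$-tuples $Q' \cup (a)$ with $Q' \in \prod_{i=1}^{d-1} A_i$ and $a \in A_d$ range exactly once over $\prod_{i=1}^{d} A_i$ yields
\[
    \mu_{s,\theta}\!\left(\prod_{i=1}^{d-1} A_i;\, T\right)
    \;\le\;
    \frac{1}{\prod_{i=1}^{d}|A_i|} \sum_{Q \in \prod_{i=1}^{d} A_i} \omega_{\theta}(Q;T)^s
    \;=\; \mu_{s,\theta}\!\left(\prod_{i=1}^{d} A_i;\, T\right),
\]
which is the desired inequality.

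There is no real obstacle here: the whole content is the monotonicity statement combined with a linear averaging. The only point that requires a sentence of care is the $s=0$ case, where one needs to check compatibility with the ad hoc convention $0^0 := 0$; this is immediate from the case analysis above. In particular, no use of the threshold $\theta$ or of the specific structure of $T$ is needed beyond the already-proved monotonicity.
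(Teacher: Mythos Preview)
Your proof is correct and follows essentially the same approach as the paper: both use the monotonicity $\omega_{\theta}(Q';T) \le \omega_{\theta}(Q'\cup(a);T)$, average over $a \in A_d$, and then average over $Q' \in \prod_{i=1}^{d-1}A_i$. Your explicit remark on the $s=0$ case is a small addition the paper leaves implicit, but otherwise the arguments are identical.
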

\begin{proof}
Since $\omega_{\theta}(Q; T) \le \omega_{\theta}(Q'; T)$ for all $Q \subseteq Q'$,
\begin{align*}
	\sum_{Q \in \prod_{i=1}^{d-1} A_{i}} \omega_{\theta}(Q; T)^s
	\le &\,
	\sum_{Q \in \prod_{i=1}^{d-1} A_{i}} \left(\frac{1}{|A_{d}|}\sum_{v_d \in A_{d}} \omega_{\theta}(Q \cup \{v_d\}; T)^s\right)
	= \frac{1}{|A_{d}|} \sum_{Q \in \prod_{i=1}^{d} A_i} \omega_{\theta}(Q; T)^s.
\end{align*}
Divide both sides of the inequality by $\prod_{i=1}^{d-1}|A_i|$ to obtain $\mu_{s,\theta}(\prod_{i=1}^{d-1}A_i; T) \le \mu_{s,\theta}(\prod_{i=1}^{d}A_i;T)$.
\end{proof}

The following proposition shows that the contribution of $d$-tuples with
repeated vertices towards the average defect is of small magnitude.

\begin{prop} \label{prop:boundary}
Let $A_1, \cdots, A_d$ be vertex subsets all of sizes at least $m$,
and let $T$ be a vertex subset. 
Let $\partial \mathcal{Q} \subseteq \prod_{i \in [d]} A_i$ 
be the set of $d$-tuples $(v_1, \cdots, v_d)$ for which $v_i = v_j$ for some distinct $i,j \in [d]$. 
Then $\sum_{Q \in \partial\mathcal{Q}} \omega_{\theta}(Q;T)^s \le \frac{d(d-1)}{2m} \sum_{Q \in \prod_{i=1}^{d} A_{i}} \omega_{\theta}(Q; T)^s$.
\end{prop}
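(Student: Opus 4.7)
My plan is to reduce to pairwise collisions and then exploit the fact that $\omega_\theta$ depends only on the \emph{set} of coordinates, together with the monotonicity already noted for $\omega_\theta$.

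First I would write $\partial\mathcal{Q}=\bigcup_{1\le i<j\le d}\partial_{ij}\mathcal{Q}$, where $\partial_{ij}\mathcal{Q}$ is the set of tuples $(v_1,\dots,v_d)\in\prod_k A_k$ with $v_i=v_j$. A union bound over the $\binom{d}{2}$ pairs reduces the problem to showing that for each fixed $i<j$,
\[
\sum_{Q\in\partial_{ij}\mathcal{Q}}\omega_\theta(Q;T)^s\ \le\ \frac{1}{m}\sum_{Q\in\prod_k A_k}\omega_\theta(Q;T)^s.
\]

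Next I would fix a pair $i<j$ and, for each $Q=(v_1,\dots,v_d)\in\partial_{ij}\mathcal{Q}$, consider the $(d-1)$-tuple $Q^-$ obtained by deleting the $j$-th coordinate. Since $v_i=v_j$, the underlying set of vertices in $Q^-$ is the same as in $Q$, so $N(Q^-;T)=N(Q;T)$ and hence $\omega_\theta(Q^-;T)=\omega_\theta(Q;T)$. Then, using the monotonicity $\omega_\theta(Q^-;T)\le \omega_\theta(Q'; T)$ whenever $Q^-\subseteq Q'$ (as sets), I would insert any $w\in A_j$ back into the $j$-th slot to obtain a tuple $Q^{+w}\in\prod_k A_k$ satisfying $\omega_\theta(Q;T)^s\le \omega_\theta(Q^{+w};T)^s$. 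Averaging over $w\in A_j$ gives
\[
\omega_\theta(Q;T)^s\ \le\ \frac{1}{|A_j|}\sum_{w\in A_j}\omega_\theta(Q^{+w};T)^s.
\]

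Now I would sum over $Q\in\partial_{ij}\mathcal{Q}$ and observe that the resulting tuples $Q^{+w}$ each lie in $\prod_k A_k$, and moreover the map $(Q,w)\mapsto Q^{+w}$ is injective onto the subset of $\prod_k A_k$ whose $i$-th coordinate happens to lie in $A_i\cap A_j$. Dropping that extra constraint can only enlarge the sum, so
\[
\sum_{Q\in\partial_{ij}\mathcal{Q}}\omega_\theta(Q;T)^s\ \le\ \frac{1}{|A_j|}\sum_{Q'\in\prod_k A_k}\omega_\theta(Q';T)^s\ \le\ \frac{1}{m}\sum_{Q'\in\prod_k A_k}\omega_\theta(Q';T)^s,
\]
using $|A_j|\ge m$. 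Summing this bound over the $\binom{d}{2}$ pairs $i<j$ yields the claimed inequality.

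I don't anticipate a real obstacle; the only subtlety is making sure the bookkeeping of the injection $(Q,w)\mapsto Q^{+w}$ is clean, since one must be careful that the $i$-th coordinate $v_i$ remains in $A_i$ (it does, by the definition of $\partial_{ij}\mathcal{Q}$), and that the union bound loses only a factor $\binom{d}{2}$ rather than something larger.
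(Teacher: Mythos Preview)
Your proposal is correct and follows essentially the same approach as the paper: both arguments use the union bound over the $\binom{d}{2}$ collision pairs, observe that deleting the repeated coordinate leaves $\omega_\theta$ unchanged, and then reinsert an arbitrary vertex from the deleted slot using monotonicity to recover a sum over the full product $\prod_k A_k$ at the cost of a factor $1/|A_j|\le 1/m$. The only cosmetic difference is that the paper packages the reinsertion step as an appeal to Proposition~\ref{prop:productsets}, whereas you carry it out explicitly via the injection $(Q,w)\mapsto Q^{+w}$.
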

\begin{proof}
For each distinct $a,b \in [d]$, define $\mathcal{Q}_{a,b}$ as the 
set of $d$-tuples in $\partial \mathcal{Q}$ whose $a$-th and $b$-th coordinates coincide.
Note that if $Q = (v_1, \cdots, v_d)$
and $v_{d-1} = v_d$, then $\omega_{\theta}(Q;T) = \omega_{\theta}((v_1, \cdots, v_{d-1}); T)$. 
Therefore, by Proposition~\ref{prop:productsets},
\[
	\sum_{Q \in \mathcal{Q}_{d-1,d}} \omega_{\theta}(Q;T)^s 
	= \sum_{Q \in \prod_{i=1}^{d-1}A_i} \omega_{\theta}(Q;T)^s 
	\le \frac{1}{|A_d|} \sum_{Q \in \prod_{i=1}^{d} A_{i}} \omega_{\theta}(Q; T)^s.
\]
Since $|A_i| \ge m$ for all $i \in [d]$, the conclusion follows by summing the inequalities over all pairs $a,b$. 
\end{proof}

The next proposition asserts that for a pair of sets $V_1$ and $V_2$, 
if the $s$-th moment of the average defect of $V_1^d$ into $V_2$ is small, then 
all $d$-tuples in $V_1^d$ have many common neighbors in $V_2$.

\begin{prop} \label{prop:mindeg}
Let $d, s$, and $\theta$ be natural numbers. 
For all sets of vertices $V_1$ and $V_2$, the number of
$d$-tuples $Q \in V_1^d$ satisfying $|N(Q; V_2)| < \frac{\theta}{|V_1|^{d/s}}$
is less than $\mu_{s,\theta}(V_1^d; V_2)$. 
\end{prop}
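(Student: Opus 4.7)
The plan is to deduce the bound from a Markov-type inequality applied to the random variable $\omega_{\theta}(Q; V_2)^s$ over a uniformly chosen $Q \in V_1^d$. The key point is that the threshold $\theta/|V_1|^{d/s}$ in the statement has been calibrated precisely so that a $d$-tuple violating the common neighborhood bound contributes more than $|V_1|^d$ to the sum defining $\mu_{s,\theta}(V_1^d; V_2)$.

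First I would observe that the hypothesis $|N(Q; V_2)| < \theta/|V_1|^{d/s}$ in particular forces $|N(Q; V_2)| < \theta$ (since $|V_1|^{d/s} \ge 1$), so the second case of Definition~\ref{dfn:defect} applies and $\omega_\theta(Q; V_2) = \theta/|N(Q; V_2)| > |V_1|^{d/s}$. Raising to the $s$-th power then yields $\omega_\theta(Q; V_2)^s > |V_1|^d$.

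Next, let $N$ denote the number of $d$-tuples $Q \in V_1^d$ satisfying the hypothesis. Summing the strict lower bound $\omega_\theta(Q; V_2)^s > |V_1|^d$ over this set of $N$ tuples and bounding the contribution of the remaining tuples below by zero gives
\[
\sum_{Q \in V_1^d} \omega_\theta(Q; V_2)^s > N \cdot |V_1|^d.
\]
Dividing both sides by $|V_1|^d$ and invoking the definition of $\mu_{s,\theta}$ produces the desired inequality $\mu_{s,\theta}(V_1^d; V_2) > N$.

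There is no substantive obstacle here: the proposition is essentially a Markov inequality, and the only care needed is to verify that $\omega_\theta(Q; V_2)$ lies in its nonzero regime so that the defect genuinely equals $\theta/|N(Q; V_2)|$ rather than $0$. Once that is observed, the rest is a direct calculation.
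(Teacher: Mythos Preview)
Your proof is correct and follows the same approach as the paper's: both observe that any $d$-tuple $Q$ with $|N(Q;V_2)| < \theta/|V_1|^{d/s}$ satisfies $\omega_\theta(Q;V_2)^s > |V_1|^d$, and then sum to conclude. Your write-up simply makes the Markov-inequality framing and the verification that $\omega_\theta$ is in its nonzero branch more explicit than the paper does.
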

\begin{proof}
If a $d$-tuple $Q \in V_{1}^d$ satisfies $|N(Q; V_2)| < \frac{\theta}{|V_1|^{d/s}}$,
then by definition, we have $\omega_{\theta}(Q; V_2)^s > |V_1|^{d}$.
Therefore, the number of such $d$-tuples is less than $\mu_{s,\theta}(V_1^d; V_2)$. 
\end{proof}

We will use Proposition~\ref{prop:mindeg} mostly in the case when $\mu_{s,\theta}(V_1^d; V_2) < 1$. 
For such cases, all $d$-tuples $Q \in V_1^d$ satisfy $|N(Q; V_2)| \ge \frac{\theta}{|V_1|^{d/s}}$.

\subsection{Outline of proof}

Let $H$ be a $d$-degenerate graph and $G$ be a host graph that we would like to embed $H$ into.
Let $\{W_i\}_{i \in [k]}$ be the partition of the vertex set of $H$ obtained from Lemma~\ref{lem:split}
and suppose that we have disjoint vertex subsets $\{V_i\}_{i \in [k]}$ of $G$.
We will embed $W_i$ in $V_i$ one vertex at a time, starting at $i =k$ and 
proceeding in decreasing order of index. 
Suppose that we have successfully embedded $W_{i+1}$ in $V_{i+1}$ (and all previous steps) and are
about to embed $W_i$ in $V_i$. Let $\phi$ denote the partial embedding that we have at that moment. 
For each vertex $v \in W_i$, define $N^+(v) = N(v) \cap \bigcup_{j > i} W_j$.
For simplicity, we assume that $|N^+(v)| = d$ for all $v \in V(H)$.
We can force the extension of $\phi$ to $W_i$ to be a homomorphism
by embedding each vertex $v \in W_i$ to a common neighbor of vertices in $\phi(N^+(v))$. 
To obtain an embedding, we must additionally guarantee that $\phi$ is injective.
Let $e_v$ be an arbitrary ordering of the tuple $N^+(v)$.
If the map constructed at the previous steps were well 
chosen so that the sum
\begin{align} \label{eq:important_sum}
	\sum_{v \in W_i} \omega_{\theta}(\phi(e_v); V_i)
\end{align}
is small, then we can proceed as in the proof of Theorem~\ref{thm:one_side_bounded}
to extend the map to $W_{i}$.

To briefly describe how to obtain control on \eqref{eq:important_sum},
we consider the case when all vertices in $v \in W_i$ satisfy 
$N^+(v) \subseteq W_{i+1}$. In this case, if the embedding $\phi|_{W_{i+1}}$ was chosen as a 
 uniform map from $W_{i+1}$ to $V_{i+1}$, then we would expect the sum above to be 
$|W_i|\mu_{1,\theta}(V_{i+1}^d; V_i)$. Hence, we can make \eqref{eq:important_sum}
small by finding sets $\{V_i\}_{i \in [k]}$ 
having small average defects in an appropriately defined way.
However, even if we are given such subsets $\{V_i\}_{i \in [k]}$, 
our assumption that
$\phi|_{W_{i+1}}$ is a uniform random map cannot be true.
Nevertheless, we show that \eqref{eq:important_sum} is not too far from
$\mu_{1,\theta}(V_{i+1}^d; V_i)$ by showing that our map is not too far from
the random uniform map. For example,
consider the simple case discussed above where all vertices in $v \in W_i$ satisfy 
$N^+(v) \subseteq W_{i+1}$. For a fixed vertex $v \in W_i$, let 
$w_1, \cdots, w_d$ be its neighbors in $N^+(v)$ and let
$e_v = (w_1, \cdots, w_d)$. If the embedding was chosen uniformly at random, then 
the image of $w_j$ would be chosen in the set $V_{i+1}$, but our algorithm
chooses its image in $N( \phi(e_{w_j}); V_{i+1})$ instead. 
Hence, compared to the random uniform map, the expected value of 
$\omega_{\theta}(\phi(e_v); V_{i+1})$ could be larger by at most a factor 
$\prod_{j \in [d]} \frac{|V_{i+1}|}{|N(\phi(e_{w_j}); V_{i+1})|}$.
Note that either $|N(\phi(e_{w_j}); V_{i+1})| \ge \theta$
or $\frac{|V_{i+1}|}{|N(\phi(e_{w_j}); V_{i+1})|} = \frac{|V_{i+1}|}{\theta} \omega_{\theta}(\phi(e_{w_j}); V_{i+1})$.
Therefore, by the inequality of arithmetic and geometric
means,
\[
	\prod_{j \in [d]} \frac{|V_{i+1}|}{|N(\phi(e_{w_j}); V_{i+1})|}
	\le \frac{1}{d}\sum_{j \in [d]} \left(\frac{|V_{i+1}|}{|N(\phi(e_{w_j}); V_{i+1})|}\right)^{d}
	\le \frac{1}{d}\sum_{j \in [d]} \frac{|V_{i+1}|^d}{\theta^d}\left(1 + \omega_{\theta}(\phi(e_{w_j}); V_{i+1})^d\right).
\]
The second term in the summand in expectation has value
$\mu_{d,\theta}(V_{i+2}^d; V_{i+1})$ and thus if this quantity is less than $1$ 
and everything worked as planned, 
then the right-hand side would be at most $2\left(\frac{|V_{i+1}|}{\theta}\right)^d$.
This means that in expectation, the sum \eqref{eq:important_sum} would be at most
$2\left(\frac{|V_{i+1}|}{\theta}\right)^d \cdot \mu_{1,\theta}(V_{i+1}^d; V_i)$.
In general, if $N^{+}(v) \not\subseteq W_{i+1}$, then there will be more dependencies between the random variables and it gets
trickier to control the events. However, the underlying idea, comparing our 
algorithm with the uniform random map and measuring its deviation using higher moments
of the average defect function, remains the same.
We will formalize this idea in Section~\ref{sec:embedding}.

\section{Embedding scheme} \label{sec:embedding}

In this section, we develop the embedding scheme.
The main new ingredient of our embedding algorithm compared to previous ones
lies in the randomness. We recall the proof of Theorem~\ref{thm:one_side_bounded} 
to illustrate the essence of this difference.
There, we embedded a graph $H$ with bipartition $W_1 \cup W_2$
into a graph $V$ with bipartition $V_1 \cup V_2$. 
We started by finding a set $A \subseteq V_2$ with small defect into $V_1$
using dependent random choice. This step is identical to that from previous applications
(except that we imposed a weaker bound on the defect function).
In the second step, we found a map $\phi$ from $W_2$ to $V_2$.
The main difference lies in this step. 
In previous applications, it was done greedily so that for all 
$v \in W_1$, the set $\phi(N(v))$ has many common neighbors.
For our purpose, the weaker bound from the first step makes this strategy not viable.
Therefore, we took a random map from $W_2$ to $V_2$ and used the first moment to find a map for which the number of common neighbors
is controlled by $\mu(A^d; V_1)$. 
Finally, we embedded $W_1$ in $V_1$ so that each vertex $v \in W_1$ gets mapped
to a vertex in the common neighbor of $\phi(N(v))$.
The difficulty in extending this algorithm to non-bipartite graphs lies in the fact that there is 
no clear distinction between the second and third steps. That is, when mapping a vertex
$v \in W_1$ to the common neighbor of $\phi(N(v))$, we must retain randomness
so that the process can be continued.

Let $G$ be a graph with disjoint vertex subsets $\{V_i\}_{i \in [k]}$
and $H$ be a graph with a vertex partition $\{W_i\}_{i \in [k]}$ into independent sets
where each vertex in $W_i$ has at most $d$ neighbors in $W_{i+1} \cup \cdots \cup W_k$
for all $i \in [k-1]$.
Let $\{\theta_i\}_{i \in [k-1]}$ be natural numbers.
For $i \in [k-1]$ and a vertex $x \in W_i$, define $N^+(x) = N(x) \cap \bigcup_{j \in [i+1,k]} W_j$.
Add edges to $H$ if necessary so that $|N^+(x)| = d$ for all vertices $x \in V(H) \setminus W_k$.
For $i \in [k]$ and $x \in W_i$, let $e_x$ be
an (arbitrary) ordered $d$-tuple of vertices formed from $N^+(x)$.
Define a random map $\psi : V(H) \rightarrow V(G)$ using the following `random greedy' process
\begin{itemize}
  \setlength{\itemsep}{1pt} \setlength{\parskip}{0pt}
  \setlength{\parsep}{0pt}
\item[1.] Take an injection from $W_{k}$ to $V_{k}$ uniformly at random.
\item[2.] For $i \in [k-1]$, given a map $\psi$ defined on $W_{i+1} \cup \cdots \cup W_{k}$, we then extend $\psi$ to $W_i$. Let $x_1, x_2, \cdots, x_{m}$ be the vertices in $W_{i}$ ordered so that $\omega_{\theta_i}(\psi(e_{x_j}); V_i)$ is decreasing in $j$ (ties are broken arbitrarily).
\item[3.] After embedding $x_1, \cdots, x_{j-1}$, define $e_j = e_{x_j}$ and $L_j = N(\psi(e_j); V_i) \setminus \{\psi(x_1), \cdots, \psi(x_{j-1})\}$ as the set of available vertices for $x_j$. Embed $x_j$ using the first valid option among the following:
\begin{itemize}  
  \setlength{\itemsep}{1pt} \setlength{\parskip}{0pt}
  \setlength{\parsep}{0pt}
\item[3-1.] If $N(\psi(e_j); V_i) = \emptyset$, then let $\psi(x_j)$ be a vertex in $V_i$ chosen uniformly at random. 
\item[3-2.] If $|L_j| < \frac{1}{2}|N(\psi(e_j);V_i)|$, then let $\psi(x_j)$ be a vertex in $N(\psi(e_j); V_i)$ 
chosen uniformly at random.
\item[3-3.] If $|L_j| \ge \frac{1}{2}|N(\psi(e_j);V_i)|$, then let $\psi(x_j)$ be a vertex in $L_j$ chosen uniformly at random. 
\end{itemize}
\end{itemize}
If we only run Steps 3-2 and 3-3 (but never Step 3-1)
throughout the embedding process then the resulting map is a homomorphism
from $H$ to $G$.
As we will later see, Step 3-1 can easily be ruled out if we have control on the defect function 
so one can safely assume that $\psi$ is always an homomorphism from $H$ to $G$.
Given that $\psi$ is a homomorphism, it suffices to show that $\psi$ is injective, in order to show that it is an embeding.
Thus, it will be crucial to understand when we will run Step 3-3 instead of Step 3-2.

\medskip

Throughout the proof we consider the defect function $\omega_{\theta}(Q; T)$
for various different choices of $\theta, Q$, and $T$.
Since these parameters
will always be a function of the vertex $x \in V(H)$ that we are about to embed, we make the following definitions to remove redundant information and simplify notation.

\begin{dfn} \label{def:parametrized_notation}
Suppose that we are given $\{V_i\}_{i \in [k]}$, $\{W_i\}_{i \in [k]}$, and $\{\theta_i\}_{i \in [k]}$.
For $i \in [k-1]$ and $x \in W_i$, make the following definitions:
\begin{itemize}
  \setlength{\itemsep}{1pt} \setlength{\parskip}{0pt}
  \setlength{\parsep}{0pt}
\item[(i)] $\theta_x := \theta_i$ and $V_x := V_i$. 
\item[(ii)] $\omega(x;\psi) := \omega_{\theta_x}(\psi(e_x); V_x)$.
\item[(iii)] $\mathcal{Q}_x := V_{i_1} \times \cdots \times V_{i_d}$
where $W_{i_1} \times \cdots \times W_{i_d}$ is the unique product space containing $e_x$. 
\item[(iv)] $\mu_s(x) := \mu_{s,\theta_x}(\mathcal{Q}_x; V_x)$ and  $\mu_{s} := \max_{x \in V(H)} \mu_s(x)$.
\item[(v)] $\gamma := \max\left\{1, \max_{i \in [k-1]} \frac{|V_i|}{\theta_i}\right\}$.
\end{itemize}
\end{dfn}

For $x \in V(H)$, if $\mu_s(x)$ is finite, then there are no $d$-tuples $Q \in \mathcal{Q}_x$ 
having $N(Q; V_x) = \emptyset$. Hence, we will never run Step 3-1 in such case.
For a vertex $x \in W_i$, the parameter $\theta_x = \theta_i$ is the defect
that is of interest when embedding $x$ in $V_i$, as whether we run Step 3-2 or 3-3
depends on $|N(\psi(e_x); V_x)|$ which is closely related to
$\omega(x;\psi) = \omega_{\theta_x}(\psi(e_x); V_x)$. Note that the enumeration of vertices
in Step 2 is determined by the values $\omega(x;\psi)$.
Further note that if the embedding algorithm chose the image of 
each vertex $y \in e_x$ as a random uniform vertex in $V_y$, then 
$\mu_s(x)$ would be the expected value of $\omega(x;\psi)^s$.
This is an important observation, since we will later see that 
even though $\psi$ does not
always choose a random uniform vertex,
$\gamma$ can be used to bound the difference between
the expected value of $\omega(x;\psi)^s$ defined by our random process
and by the random uniform case.
The following theorem establishes a sufficient condition for $\psi$ to be an embedding.

\begin{thm} \label{thm:success}
If $|V_k| \ge 2|W_k|$ and $\theta_i \ge 2|W_i|$ for all $i \in [k-1]$, 
then the probability that $\psi$ does not induce an embedding of $H$ in $G$ 
is at most $2^{2d+2}\gamma^{2d} \mu_{4d} \sum_{i \in [k-1]} \frac{|W_i|}{\theta_i}$.
\end{thm}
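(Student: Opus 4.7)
The plan is to split the argument into three pieces: a deterministic reduction that trades the event ``$\psi$ is not an embedding'' for control over certain sums of defects, an elementary Markov step, and a key lemma that bounds the expected defect of a single vertex by comparing the greedy random process to a uniform random map.

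\emph{Deterministic reduction.} I would show that if, for every $i\in[k-1]$, the quantity $S_i := \sum_{x\in W_i}\omega(x;\psi)$ is at most $\theta_i/2$ (with the convention that $\omega(x;\psi)=\infty$ whenever Step~3-1 fires for $x$), then Step~3-3 fires at every vertex outside $W_k$ and the resulting $\psi$ is an embedding. Injectivity across layers is automatic because the sets $V_i$ are disjoint; inside $W_k$ it follows from the uniform random injection in Step~1, which is allowed since $|V_k|\ge 2|W_k|$. Inside $W_i$ with $i<k$, the bound $|L_j|\ge |N(\psi(e_j);V_i)|-(j-1)$ reduces Step~3-3 firing at $x_j$ to the condition $|N(\psi(e_j);V_i)|\ge 2(j-1)$. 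When $\omega(x_j;\psi)=0$, this follows from $\theta_i\ge 2|W_i|$; when $\omega(x_j;\psi)>0$, the decreasing ordering gives $\omega(x_j;\psi)\le S_i/j$, so $|N(\psi(e_j);V_i)|=\theta_i/\omega(x_j;\psi)\ge j\theta_i/S_i\ge 2(j-1)$ whenever $S_i\le \theta_i/2$.

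\emph{Markov and union bound.} From the reduction,
\[
\BFP[\psi \text{ is not an embedding}] \le \sum_{i\in[k-1]}\BFP[S_i>\theta_i/2] \le \sum_{i\in[k-1]}\frac{2}{\theta_i}\sum_{x\in W_i}\BBE[\omega(x;\psi)].
\]
It therefore suffices to show $\BBE[\omega(x;\psi)]\le 2^{2d+1}\gamma^{2d}\mu_{4d}$ for every $x\in V(H)\setminus W_k$; summing over $W_i$ and then over $i$ yields the theorem.

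\emph{Key lemma: comparison with the uniform map.} The principal obstacle is the bound on $\BBE[\omega(x;\psi)]$. The reference point is the uniform random map $\psi_{\mathrm{unif}}$ (each $\psi_{\mathrm{unif}}(y)$ chosen independently and uniformly in $V_y$), for which $\BBE[\omega(x;\psi_{\mathrm{unif}})]=\mu_1(x)\le \mu_{4d}$. To pass from uniform to greedy, I would condition on the partial embedding outside $e_x$ and bound the Radon--Nikodym density by the product $\prod_{y\in e_x}|V_y|/|N(\psi(e_y);V_y)|$, up to a factor $2^d$ accounting for Step~3-3 drawing from $L_j$ of size $\ge\tfrac12|N(\psi(e_y);V_y)|$. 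Each factor is at most $\gamma\max(1,\omega(y;\psi))$, so by AM--GM,
\[
\prod_{y\in e_x}\frac{|V_y|}{|N(\psi(e_y);V_y)|} \le \frac{\gamma^d}{d}\sum_{y\in e_x}\max(1,\omega(y;\psi))^d \le \frac{\gamma^d}{d}\sum_{y\in e_x}\bigl(1+\omega(y;\psi)^d\bigr),
\]
as in the outline of Section~\ref{sec:preliminaries}. Taking expectations introduces the higher moments $\BBE[\omega(y;\psi)^d]$, which I would control by applying the same comparison one more time to each $y\in e_x$; this second pass contributes an additional $\gamma^d$ and, via the monotonicity $\mu_s\le \mu_{s'}$ for $s\le s'$, pushes the moment exponent up to at most $4d$. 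The main difficulty is handling the dependencies between the random choices $\psi(y)$ made across layers so that, after two rounds of AM--GM, all error terms are absorbed into $2^{2d+1}\gamma^{2d}\mu_{4d}$. Combining these pieces yields the claimed bound $2^{2d+2}\gamma^{2d}\mu_{4d}\sum_{i\in[k-1]}|W_i|/\theta_i$.
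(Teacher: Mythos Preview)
Your deterministic reduction and the Markov/union-bound step are correct and match the paper's Lemma~\ref{lem:random_greedy} and the final paragraph of the proof of Theorem~\ref{thm:success}. The substantive gap is in the key lemma: two rounds of comparison do \emph{not} suffice to absorb all error terms.

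Concretely, after your first round (neutralizing $e_x$), Lemma~\ref{lem:decouple} leaves residual terms $\BBE_{\nu_2}\bigl[\omega(y;\psi)^{2t}\bigr]$ for $y\in e_x$. Each such term depends on $\psi(e_y)$, and under $\nu_2$ only $e_x$ is neutralized, not $e_y\setminus e_x$. Your second round, applied to each $y$, neutralizes $e_y$ and produces a main term $\mu_{4d}$ together with \emph{new} residual terms $\BBE_{\nu_3}\bigl[\omega(z;\psi)^{2t'}\bigr]$ for $z\in e_y\setminus(e_x\cup W_{k+1})$. These are again not under a uniform measure for $e_z$, and there is no reason for $e_z$ to lie inside the already-neutralized set. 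So the process does not terminate in two steps; it must be iterated until the labels reach the top layer $W_{k+1}$, which can take up to $k$ rounds.

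The paper handles this with a tree argument (Lemma~\ref{lem:defect_bound}). One builds a rooted tree whose nodes are labelled by vertices of $H$: the root is $x$, and the children of a node labelled $y_a$ are the vertices of $N^+(y_a)$ not yet neutralized along the path to the root. Each node carries a weight, and the crucial observation is that when Lemma~\ref{lem:decouple} is applied at a node $a$, the factor $\tfrac{1}{2|F(a)|}$ in front of the residual sum makes the total weight of the children exactly half the weight of $a$. Hence the weights at depth $j$ sum to at most $2^{-j}$, the total weight over the whole tree is at most $2$, and the $\gamma'=2^{2d}\gamma^{2d}$ factor appears only once per node (multiplying the main term $\mu_{4d}$), not multiplicatively along paths. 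This is what yields the single factor $2^{2d+1}\gamma^{2d}\mu_{4d}$ rather than something growing with depth. Your sketch captures the first two levels of this tree but misses the mechanism---geometric decay of weights plus the fact that $\gamma'$ never compounds---that makes the full iteration converge.
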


The rest of this section focuses on proving Theorem~\ref{thm:success}. The proof of Theorem~\ref{thm:main}
then follows by finding subsets of vertices of $G$ satisfying the condition of Theorem~\ref{thm:success}.

\subsection{Proof of Theorem~\ref{thm:success}}

For technical reasons, we consider auxiliary graphs.
Add a set $W_{k+1}$ of $d$ vertices to $H$, 
make the bipartite graph between $W_k$ and $W_{k+1}$ complete, and
between $W_i$ and $W_{k+1}$ empty for all $i \in [k-1]$. Denote the resulting graph
as $H'$.
Define $N^+(x) := N(x) \cap W_{k+1}$ for each $x \in W_k$. 
Note that now, for all $x \in V(H)$, we have $|N^{+}(x)| = d$.
Let $G'$ be a graph obtained from $G$ by adding
a set $V_{k+1}$ of $d$ vertices adjacent to all other vertices.
Define $\theta_k = |V_k|$ and note that $\omega_{\theta_k}(Q; V_k) = 0$ for all 
$Q \in V_{k+1}^d$.
Further note that for all $x \in W_k$, we have $\omega_{\theta_k}(x;\psi) = 0$ and $\mu_{s}(x) = 0$ 
regardless of the choice of 
$\psi$ and $s$ since $e_x \in W_{k+1}^d$. Note that Definition~\ref{def:parametrized_notation} now extends to $i=k$ as well. The parameters
for $i\le k-1$ do not change depending on whether we consider the graphs $H,G$,
or the graphs $H',G'$. Furthermore, $\gamma$ is identical when defined for the pair 
$G$ or in $G'$, since $\frac{|V_k|}{\theta_k} = 1$. 
Thus we abuse notation and use the parameters defined in 
Definition~\ref{def:parametrized_notation} as if they were defined for $H',G'$.

Throughout this section, we will apply the embedding scheme defined above to embed
$H'$ to $G'$, but with a slightly modified first step:
\begin{itemize}
  \setlength{\itemsep}{1pt} \setlength{\parskip}{0pt}
  \setlength{\parsep}{0pt}
\item[1'.] Take a map from $W_{k+1}$ to $V_{k+1}$ uniformly at random (instead of a random injection).
\end{itemize}
We then embed the rest of the graph using the same algorithm (where we also consider $i=k$ 
in Steps 2 and 3). Note that $\psi$ restricted
to the vertices $V(H)$ has the same distribution as the map previously defined without the
sets $W_{k+1}$ and $V_{k+1}$. 

The following lemma gives a sufficient condition for $\psi$ to be an embedding.

\begin{lem} \label{lem:random_greedy}
Let $G, H, G', H', \{V_i\}_{i \in [k+1]}, \{W_i\}_{i \in [k+1]}$, and $\psi$ be as described above.
Suppose that $\theta_i \ge 2|W_i|$ for all $i \in [k]$.
For a natural number $s$, let $\phi : V(H') \rightarrow V(G')$ be a map satisfying
$\BFP(\psi = \phi) > 0$ and
$\sum_{x \in W_i} \omega(x; \phi)^s \le \frac{1}{2}\theta_i$ for all $i \in [k]$.
Then $\phi |_{V(H)}$ is an embedding of $H$ in $G$.
\end{lem}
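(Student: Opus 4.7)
The plan is to exploit the summability hypothesis $\sum_{x\in W_i}\omega(x;\phi)^s \le \tfrac{1}{2}\theta_i$ in two ways: first to argue that Step~3-1 is never triggered when the random algorithm produces $\phi$, and second, via a standard rearrangement argument on the ordered defects, to show that Step~3-3 (rather than Step~3-2) is the step actually executed at every embedding decision. Once both are established, $\phi$ is automatically a homomorphism and injective on each $W_i$, which, together with the disjointness of the $V_i$, yields the required embedding of $H$ in $G$.

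First I would rule out Step~3-1. Since $\sum_{x\in W_i}\omega(x;\phi)^s$ is finite, each individual $\omega(x;\phi)$ is finite, which by Definition~\ref{dfn:defect} means $|N(\phi(e_x);V_x)|>0$ for every vertex $x\in V(H')$. Therefore, when the random process produces $\phi$ it can never be in the Step~3-1 branch, and consequently each $\phi(x_j)$ will lie in $N(\phi(e_j);V_i)$.

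The main computation is the rearrangement bound. Fix $i\in[k]$ and use the Step~2 ordering $x_1,\ldots,x_m$ of $W_i$, in which $\omega(x_j;\phi)$ is non-increasing in $j$. Since the first $j$ terms of the sum each dominate $\omega(x_j;\phi)^s$, the hypothesis gives $j\,\omega(x_j;\phi)^s \le \tfrac{1}{2}\theta_i$, so $\omega(x_j;\phi)\le (\theta_i/(2j))^{1/s}$. Combined with $\theta_i\ge 2|W_i|\ge 2j$, a one-line manipulation gives $|N(\phi(e_j);V_i)| = \theta_i/\omega(x_j;\phi) \ge \theta_i^{\,1-1/s}(2j)^{1/s}\ge 2j$ when $\omega(x_j;\phi)>0$, while if $\omega(x_j;\phi)=0$ then $|N(\phi(e_j);V_i)|\ge\theta_i\ge 2j$ directly. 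Because at most $j-1$ vertices of $W_i$ have already been embedded when $x_j$ is processed, the available set satisfies $|L_j|\ge |N(\phi(e_j);V_i)|-(j-1)\ge\tfrac{1}{2}|N(\phi(e_j);V_i)|$, forcing the algorithm to take Step~3-3. This places $\phi(x_j)$ inside $N(\phi(e_j);V_i)\setminus\{\phi(x_1),\ldots,\phi(x_{j-1})\}$, giving adjacency and intra-$W_i$ injectivity in one stroke.

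To finish, injectivity across distinct $W_i$ is automatic from the disjointness of $\{V_i\}_{i\in[k]}$, and every edge of $H$ has its ``lower'' endpoint $x\in W_i$ matched, via some coordinate of $e_x$, to a vertex whose image Step~3-3 has just made $\phi(x)$ adjacent to, so all edges of $H$ are preserved. The one point that deserves care is the uniform lower bound $|N(\phi(e_j);V_i)|\ge 2j$ valid for every $s\ge 1$; the assumption $\theta_i\ge 2|W_i|$ is precisely what is needed to absorb the $(\theta_i/(2j))^{1/s}$ factor and make the bound independent of $s$.
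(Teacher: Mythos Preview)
Your proof is correct and follows essentially the same approach as the paper: rule out Step~3-1 via finiteness of the defect sum, then use the decreasing ordering to bound $\omega(x_j;\phi)$ and conclude $|N(\phi(e_j);V_i)|\ge 2j$, forcing Step~3-3 throughout. The only difference is cosmetic: the paper first observes that $\omega(x;\phi)\le\omega(x;\phi)^s$ (since $\omega$ takes values in $\{0\}\cup[1,\infty)$) and thereby reduces to $s=1$, whereas you work directly with general $s$ and use $\theta_i\ge 2j$ to obtain $\theta_i^{1-1/s}(2j)^{1/s}\ge 2j$; both routes yield the same bound.
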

\begin{proof} 
As discussed in Section~\ref{sec:preliminaries}, we have
$\sum_{x \in W_i} \omega(x; \phi) \le \sum_{x \in W_i} \omega(x; \phi)^s$ and therefore it suffices
to consider the case $s=1$.
Let $\phi : V(H') \rightarrow V(G')$
be a map satisfying $\sum_{x \in W_i} \omega(x; \phi) \le \frac{1}{2}\theta_i$
for all $i \in [k]$.
Fix $i \in [k]$ and condition on the event that $\psi = \phi$ after mapping the vertices 
$W_{i+1}, \cdots, W_k$. 
Let $x_1, x_2, \cdots, x_m$ be the vertices in $W_i$ in decreasing order of
$\omega(x_j; \phi)$ and define $e_j = e_{x_j}$ for each $j \in [m]$. 
Note that this is the order that the vertices in $W_i$ will be
mapped to $V_i$. 
Consider the $j$-th step. Since $\sum_{x \in W_i} \omega(x; \phi)$ is finite, 
we have $N(\phi(e_j); V_i) \neq \emptyset$.
Hence, we will not run Step 3-1 when mapping $v_j$.

If $\omega(x_j; \phi) = 0$, then $|N(\phi(e_j); V_{i})| \ge \theta_i \ge 2|W_i|$
and thus we determine $\psi(x_j)$ according to Step 3-3. 
If $\omega(x_j; \phi) \neq 0$, then by how we ordered the vertices, 
we have $j \cdot \omega(x_j; \phi) \le \frac{1}{2}\theta_i$, and thus
\[
	\frac{\theta_i}{|N(\phi(e_j); V_i)|} = \omega(x_j; \phi) \le \frac{\theta_i}{2j},
\]
from which it follows that $|N(\phi(e_j) ; V_i)| \ge 2j$.
Since we embedded at most $j-1$ vertices of $W_i$ prior to $x_j$,
we determine $\psi(x_j)$ according to Step 3-3 when embedding $x_j$. 
Since $\BFP(\psi(x_j) = \phi(x_j)) > 0$ (conditioned on $\psi = \phi$ for all
previous vertices), we see that $\phi(x_j)$ is in $N(\phi(e_j); V_i)$ and
is distinct from all previously mapped vertices. 
Since the analysis applies to all steps of the embedding, we can conclude that
$\phi|_{V(H)}$ is injective, and thus an embedding.
\end{proof}

Lemma~\ref{lem:random_greedy} shows that it is crucial to control the quantity
 $\sum_{x \in W_i} \omega(x; \psi)^s$.
Note that the expected value of $\omega(x; \psi)^s$ is $\mu_{s}(x)$ if $\psi(e_x)$
were uniformly distributed in $\mathcal{Q}_x$. 
This was the case in the proof of Theorem~\ref{thm:one_side_bounded}, and there, thus 
we were able to conclude that $\sum_{x \in W_i} \omega(x; \psi)^s$ is small
quite straightforwardly.
Now our situation is more complicated, since $\psi(e_x)$ is no longer uniformly distributed in $\mathcal{Q}_x$.
We gain control on the sum by comparing our distribution with 
the uniform distribution.

Let $\nu$ be the distribution on the set of maps $\phi : V(H') \rightarrow V(G')$ 
obtained by the random greedy embedding algorithm defined above. 
For a set of vertices $I \subseteq V(H)$, we say that $\tilde{\nu}$ is 
a probability distribution obtained from $\nu$ by {\em neutralizing $I$}
if in the random greedy process above, every time we embed a vertex $x \in I$, 
instead of following Steps 3-1, 3-2, or 3-3, we choose the image of $x$
as a uniform random vertex in $V_x$. For example, if we neutralize all vertices, 
then the resulting distribution is a uniform distribution over all maps
$\phi : V(H') \rightarrow V(G')$ satisfying $\phi(W_i) \subseteq V_i$ for all $i \in [k+1]$.

From now on, we use $\psi$ to denote a random map from $V(H')$ to $V(G')$
whose distribution is determined by the probability measure under consideration.
In contrast, we use $\phi$ to denote fixed (non-random) maps. 
Recall that $\gamma = \max\left\{1, \max_{i \in [k]} \frac{|V_i|}{\theta_i} \right\}$.

\begin{lem} \label{lem:decouple}
Let $I_1 \subseteq V(H)$ be a subset of vertices and
let $\nu_1$ be the distribution obtained from $\nu$ by neutralizing $I_1$.
Let $X$ be a non-negative random variable depending only on the images of vertices in $J$ for some $J \subseteq V(H)$.
Define $I_2 = I_1 \cup J$ and define $\nu_2$
as the distribution obtained from $\nu$ by neutralizing $I_2$. Suppose that $t = |J \setminus I_1| \ge 1$. Then
\[
	\BBE_{\nu_1}[X]
	\le 2^{t}\gamma^{t} \BBE_{\nu_2}[X] +  2^{2t - 1} \gamma^{2t} \BBE_{\nu_2}[X^2] + \frac{1}{2t} \sum_{y \in J \setminus I_1}
		\BBE_{\nu_2}\left[ \omega(y; \psi)^{2t} \right].
\]
Therefore, if $X^2 \ge X$ holds with probability $1$, then 
\[
	\BBE_{\nu_1}[X]
	\le 2^{2t} \gamma^{2t} \BBE_{\nu_2}[X^2] + \frac{1}{2t} \sum_{y \in J \setminus I_1}
		\BBE_{\nu_2}\left[ \omega(y; \psi)^{2t} \right].
\]
\end{lem}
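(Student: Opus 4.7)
The plan is to compare $\nu_1$ and $\nu_2$ via a step-by-step change-of-measure. The two distributions use identical conditional rules for every vertex outside $J \setminus I_1$: on $I_1$ both pick $\psi(y)$ uniformly in $V_y$, and on $V(H') \setminus I_2$ both run the greedy Steps 3-1, 3-2, and 3-3. The sequential nature of the embedding algorithm then yields, for any fixed target map $\phi$, the likelihood ratio factorization
\[
	\frac{d\nu_1}{d\nu_2}(\phi)
	=
	\prod_{y \in J \setminus I_1} \frac{\BFP_{\nu_1}\bigl(\psi(y)=\phi(y) \mid \text{history}\bigr)}{\BFP_{\nu_2}\bigl(\psi(y)=\phi(y) \mid \text{history}\bigr)},
\]
where both conditional probabilities are evaluated with $\psi$ agreeing with $\phi$ on every vertex embedded before $y$.

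Each denominator equals $1/|V_y|$ since $\nu_2$ neutralizes $y$. Inspecting the three cases of Step 3 under $\nu_1$ and using $N(\psi(e_y);V_y) \subseteq V_y$, each numerator is at most $\max\bigl(1/|V_y|,\,2/|N(\psi(e_y); V_y)|\bigr)$, so each factor of the above product is at most $\max\bigl(1,\,2|V_y|/|N(\psi(e_y); V_y)|\bigr)$. Splitting on whether $|N(\psi(e_y); V_y)| \ge \theta_y$ and using Definition~\ref{def:parametrized_notation} to rewrite, this factor is at most $2\gamma \max(1, \omega(y; \psi))$. Setting $Y := \prod_{y \in J \setminus I_1} \max(1, \omega(y; \psi))$ gives
\[
	\BBE_{\nu_1}[X] \le (2\gamma)^t \, \BBE_{\nu_2}[XY].
\]
The only potentially degenerate case is $|N(\psi(e_y); V_y)| = 0$, where Step 3-1 fires and the true ratio is $1$ while the bound above is formally infinite; this is harmless because the same event renders $\BBE_{\nu_2}[\omega(y;\psi)^{2t}]$ infinite and hence makes the whole conclusion of the lemma trivial.

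To finish, note that $Y \ge 1$ always, so writing $XY = X + X(Y-1)$ and applying the weighted AM-GM $2X(Y-1) \le c X^2 + c^{-1}(Y-1)^2$ with $c = 2^t\gamma^t$ produces the coefficient $2^{2t-1}\gamma^{2t}$ in front of $\BBE_{\nu_2}[X^2]$ and a factor of $\tfrac12$ in front of $\BBE_{\nu_2}[(Y-1)^2]$. I would then bound $(Y-1)^2 \le Y^2 - 1$ (valid since $Y \ge 1$) and invoke the weighted AM-GM inequality
\[
	\prod_{y \in T}\omega(y;\psi)^{2} \le \frac{1}{t}\sum_{y \in T}\omega(y;\psi)^{2t} + \Bigl(1 - \frac{|T|}{t}\Bigr),
\]
applied to $T := \{y \in J \setminus I_1 : \omega(y;\psi) \ge 1\}$, which has size at most $t$. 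This gives $Y^2 - 1 \le \tfrac{1}{t}\sum_{y \in T}\omega(y;\psi)^{2t}$, producing the first inequality of the lemma. The second inequality is then immediate from $\BBE_{\nu_2}[X] \le \BBE_{\nu_2}[X^2]$ and the arithmetic $2^t \gamma^t + 2^{2t-1}\gamma^{2t} \le 2^{2t}\gamma^{2t}$, valid since $\gamma \ge 1$ and $t \ge 1$. I expect the main obstacle to be this last AM-GM step: obtaining the clean $1/t$ (rather than $1$) coefficient requires the weighted form above, and without it the error term would weaken by a factor of $t$ — a loss that would be fatal when $t$ grows with the decomposition depth $k$.
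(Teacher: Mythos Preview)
Your proof is correct and follows essentially the same approach as the paper. The paper establishes the same pointwise likelihood-ratio bound $\BBE_{\nu_1}[X] \le \BBE_{\nu_2}\bigl[X \cdot \prod_{y \in J\setminus I_1} C_y(\psi)\bigr]$ with $C_y(\psi)=2|V_y|/|N(\psi(e_y);V_y)|$, but verifies it via an explicit downward induction on the number of embedded vertices rather than invoking a change-of-measure factorization directly; the only other difference is cosmetic --- the paper applies AM--GM first (turning $\prod_y C_y$ into $2^t\gamma^t\bigl(1+\tfrac{1}{t}\sum_y\omega(y;\psi)^t\bigr)$) and then the inequality $2ab\le a^2+b^2$ on each cross term $CX\cdot\omega(y;\psi)^t$, whereas you split $XY=X+X(Y-1)$, apply $2ab\le a^2+b^2$ first, and only afterwards use AM--GM on $Y^2$.
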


Note that the first term on the right-hand side, $\BBE_{\nu_2}[X^2]$, under the probability 
measure $\nu_2$, is determined completely
by a set of vertices whose images are chosen uniformly at random. Hence, this lemma allows us to
compare the distribution on maps defined by our random greedy algorithm with the random uniform map.

\begin{proof}
For each vertex $y \in I_2 \setminus I_1$, define $C_y(\phi) = \frac{2|V_y|}{|N(\phi(e_y); V_y)|}$. 
For a set $W \subseteq V(H')$ of size $|W|=t$, fix a map $\phi : W \rightarrow V(G')$. 
We use the notation $\psi_{t} = \phi$ to indicate the event that the random map $\psi$ obtained
after embedding the first $t$ vertices is $\phi$. Define $I_W = I_2 \setminus (I_1 \cup W)$. 
Let $\nu_W$ be the distribution obtained from $\nu_1$ by neutralizing $I_1 \cup I_W$. 
We prove that for all $W, t$ and $\phi$ as above,
\begin{align} \label{eq:rev_induction}
	\BBE_{\nu_1}\left[X\,\Big|\,\psi_{t} = \phi\right]
	\le \BBE_{\nu_W}\left[X \cdot  \prod_{y \in I_W} C_y(\psi) \,\Big|\,\psi_{t} = \phi\right].
\end{align}
We prove \eqref{eq:rev_induction} by (downward) induction on $t$. If $t = |V(H)|$, then $I_W = \emptyset$ 
and so the random variables on both sides of the inequality equal $X$. 
Therefore, \eqref{eq:rev_induction} trivially holds. 

Let us now investigate the value $t$ while assuming that \eqref{eq:rev_induction} is true for all larger values.
Let $W$ be a set of size $t$ and $\phi$ be a map defined on $W$. Conditioned on $\psi_t = \phi$, 
we know which vertex will be embedded next. Say that it is $x_{t+1} \in W_i$ for some $i \in [k]$. 
Define $W' = W \cup \{x_{t+1}\}$.
For each $z \in V_i$, let $\phi_z$ be the extension of $\phi$ obtained by defining $\phi(x_{t+1}) = z$.
Then
\begin{align*}
	\BBE_{\nu_1}\left[X\Big|\,\psi_{t} = \phi\right]	
	=&\,  \sum_{z \in V_i} \BFP_{\nu_1}\left(\psi_{t+1} = \phi_z \,|\, \psi_{t} = \phi \right)\BBE_{\nu_1}\left[X\,\Big|\,\psi_{t+1} = \phi_z\right].
\end{align*}
Therefore, by the inductive hypothesis, we have
\begin{align*}
	\BBE_{\nu_1}\left[X \Big|\,\psi_{t} = \phi\right]	
	\le&\,  \sum_{z \in V_i} \BFP_{\nu_1}\left(\psi_{t+1} = \phi_z \,|\, \psi_{t} = \phi \right)
		\BBE_{\nu_{W'}}\left[X \cdot  \prod_{y \in I_{W'}} C_y(\psi) \,\Big|\,\psi_{t+1} = \phi_z\right] \\
	=&\,  \sum_{z \in V_i} \BFP_{\nu_1}\left(\psi_{t+1} = \phi_z \,|\, \psi_{t} = \phi \right)
		\BBE_{\nu_{W}}\left[X \cdot  \prod_{y \in I_{W'}} C_y(\psi) \,\Big|\,\psi_{t+1} = \phi_z\right],
\end{align*}
where the second equality follows since the distribution of $\nu_{W'}$ and $\nu_{W}$ differ only
on the image of $x_{t+1}$ which is fixed once we condition on $\psi_{t+1} = \phi_z$.

If $I_{W'} = I_{W}$, then $x_{t+1} \notin I_2$, and thus $\BFP_{\nu_1}(\psi_{t+1} = \phi_z | \psi_{t} = \phi) = \BFP_{\nu_W}(\psi_{t+1} = \phi_z \,|\,\psi_{t}= \phi)$. Therefore, the right-hand side above is
\begin{align*}
	\sum_{z \in V_i} \BFP_{\nu_{W}}\left(\psi_{t+1} = \phi_z \,|\, \psi_t = \phi \right)
		\BBE_{\nu_{W}}\left[X \cdot  \prod_{y \in I_{W}} C_y(\psi) \,\Big|\,\psi_{t+1} = \phi_z\right]
	=&\,\BBE_{\nu_{W}}\left[X \cdot  \prod_{y \in I_{W}} C_y(\psi) \,\Big|\,\psi_{t} = \phi\right],
\end{align*}
proving our claim.
On the other hand if $I_{W'} \neq I_{W}$, then $I_{W'} = I_W \cup \{x_{t+1}\}$
and $x_{t+1} \in J \setminus I_1$. Note that the image of $x_{t+1}$, under the probability distribution $\nu_1$, will be chosen uniformly in a set of size
$|V_i|$ (Case 3-1), $|N(\phi(e_{x_{t+1}}); V_i)|$ (Case 3-2), or at least $\frac{|N(\phi(e_{x_{t+1}}); V_i)|}{2}$ (Case 3-3). 
Since $\BFP_{\nu_{W}}(\psi_{t+1} = \phi_z \,|\,\psi_{t}= \phi) = \frac{1}{|V_{i}|}$,
it follows that
\[
	\BFP_{\nu_1}(\psi_{t+1} = \phi_z \,| \,\psi_t = \phi)
	\le \frac{2}{|N(\phi(e_{x_{t+1}}); V_i)|}
	= C_{x_{t+1}}(\phi) \cdot \BFP_{\nu_{W}}(\psi_{t+1} = \phi_z \,|\,\psi_{t}= \phi).
\]
Therefore,
\begin{align*}
	\BBE_{\nu_1}\left[X \Big|\,\psi_{t} = \phi\right]
	\le&\, \sum_{z \in V_i} \BFP_{\nu_1}\left(\psi_{t+1}= \phi_z \,|\, \psi_{t} = \phi \right)
			\BBE_{\nu_{W}}\left[X \cdot  \prod_{y \in I_{W'}} C_y(\psi) \,\Big|\,\psi_{t+1} = \phi_z\right] \\
	\le &\, \sum_{z \in V_{i}} C_{x_{t+1}}(\phi) \cdot \BFP_{\nu_{W}}\left(\psi_{t+1} = \phi_z \,|\, \psi_{t} = \phi \right) \BBE_{\nu_{W}}\left[X \cdot  \prod_{y \in I_{W'}} C_y(\psi) \,\Big|\,\psi_{t+1} = \phi_z\right].
\end{align*}
Since $C_{x_{t+1}}(\psi) = \frac{2|V_{x_{t+1}}|}{|N(\psi(e_{x_{t+1}}); V_{x_{t+1}})|}$ and 
$e_{x_{t+1}} \in \left(\bigcup_{j \ge i+1} W_{j}\right)^{d}$, the value of $C_{x_{t+1}}(\psi)$
is determined once we condition on $\psi_{t+1} = \phi_z$.
Since $I_{W} = I_{W'} \cup \{v_{t+1}\}$, we have
\begin{align*}
	\BBE_{\nu_1}\left[X\Big|\,\psi_{t} = \phi\right]
	\le &\, \sum_{z \in V_{i}} \BFP_{\nu_{W}}\left(\psi_{t+1} = \phi_z \,|\, \psi_{t} = \phi \right) \BBE_{\nu_{W}}\left[X \cdot  \prod_{y \in I_{W}} C_y(\psi) \,\Big|\,\psi_{t+1} = \phi_z\right] \\
	= &\, \BBE_{\nu_{W}}\left[ X \cdot  \prod_{y \in I_{W}} C_y(\psi) \,\Big|\, \psi_{t} = \phi \right].
\end{align*}
This proves the claim. 

The inequality \eqref{eq:rev_induction} for $t = 0$ gives
$\BBE_{\nu_1}[X] \le \BBE_{\nu_2}\left[X \cdot  \prod_{y \in I_2 \setminus I_1} C_y(\psi) \right]$.
Note that $C_y(\psi) = \frac{2|V_y|}{|N(\psi(e_y); V_y)|} \le 2|V_y| \cdot \max\left\{\frac{1}{\theta_y}, \frac{\omega(y;\psi)}{\theta_y}\right\}$.
Therefore, if $|I_2 \setminus I_1| = t$, then
\begin{align*}
	\prod_{y \in I_2 \setminus I_1} C_y(\psi)
	\le&\, 2^t \prod_{y \in I_2 \setminus I_1} |V_y| \left( \frac{\max\{1, \omega(y;\psi)\}}{\theta_y} \right) \\
	\le&\, \left(2^t \prod_{y \in I_2 \setminus I_1} \frac{|V_y|}{\theta_y} \right) \frac{1}{t}\sum_{y \in I_2 \setminus I_1} \left( \max\{1, \omega(y;\psi) \}\right)^t
	\le 2^t \gamma^t \frac{1}{t}\sum_{y \in I_2 \setminus I_1} (1 + \omega(y;\psi)^t).
\end{align*}
Hence, for $C = 2^t \gamma^t$,
\begin{align*}
	\BBE_{\nu_1}[X]
	\le&\, \BBE_{\nu_2}\left[CX \right]
	+ \frac{1}{t} \sum_{y \in I_2 \setminus I_1} \BBE_{\nu_2}\left[CX \cdot \omega(y;\psi)^{t}\right] \\
	\le&\, \BBE_{\nu_2}\left[CX \right]
	+ \frac{1}{t} \sum_{y \in I_2 \setminus I_1} 
	\frac{1}{2} \left(\BBE_{\nu_2}\left[C^2 X^2\ + \omega(y;\psi)^{2t}\right] \right) \\
	\le&\, \BBE_{\nu_2}\left[CX \right] + \frac{1}{2} \BBE_{\nu_2}\left[C^2X^2 \right]
	+ \frac{1}{2t} \sum_{y \in I_2 \setminus I_1} 
	\BBE_{\nu_2}\left[\omega(y;\psi)^{2t}\right]. \qedhere
\end{align*}
\end{proof}

We plan to gain control on the defects $\omega(x; \psi)$ by using Lemma~\ref{lem:decouple}. 
As explained above, the first term in the right-hand side of Lemma~\ref{lem:decouple} gives a 
direct comparison between our process and the random uniform map.
However, the second term in the right-hand side of Lemma~\ref{lem:decouple} is still problematic
since it is in general determined by vertices that are not yet neutralized.
We repeatedly apply Lemma~\ref{lem:decouple} to further gain control on these terms. 
As we proceed, the set of neutralized vertices further propagates and eventually
there will be no vertices left to be neutralized.

\begin{lem} \label{lem:defect_bound}
For all $x \in V(H)$, 
$\BBE\left[ \omega(x; \psi)^{2d} \right]\le 2^{2d+1} \gamma^{2d} \mu_{4d}$.
\end{lem}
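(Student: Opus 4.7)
The plan is to prove the lemma by downward induction on the level of $x$, using a slightly strengthened statement that is robust under the recursion. Specifically, I will prove: for every $x \in V(H)$ and every subset $I \subseteq V(H)$, if $\tilde\nu$ denotes the distribution obtained from $\nu$ by neutralizing $I$, then $\BBE_{\tilde\nu}[\omega(x;\psi)^{2d}] \le 2^{2d+1}\gamma^{2d}\mu_{4d}$. The lemma then follows by taking $I = \emptyset$.

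For the base case $x \in W_k$, the tuple $e_x$ lies in $W_{k+1}^d$; by the auxiliary construction $V_{k+1}$ is adjacent to every vertex of $G'$, so $\omega(x;\psi) = \omega_{\theta_k}(\psi(e_x); V_k) = 0$ identically, making the bound trivial. For the inductive step, fix $x \in W_i$ with $i < k$ and assume the strengthened bound for all vertices at levels strictly greater than $i$. If the set underlying $e_x$ is already contained in $I$, then under $\tilde\nu$ the images of $e_x$ are uniform and independent, so $\BBE_{\tilde\nu}[\omega(x;\psi)^{2d}] = \mu_{2d}(x) \le \mu_{4d}$, which is within the target bound since $2^{2d+1}\gamma^{2d} \ge 1$. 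Otherwise, I apply Lemma~\ref{lem:decouple} to $X = \omega(x;\psi)^{2d}$ with $I_1 = I$ and $J = I \cup e_x$ (treating $e_x$ as a set of vertices), so that $t := |J \setminus I| = |e_x \setminus I|$ satisfies $1 \le t \le d$. This is legitimate because $X$ depends only on the images of vertices in $e_x \subseteq J$, and $X^2 \ge X$ pointwise, since $\omega$ takes values in $\{0\} \cup [1,\infty)$ and hence so does $X = \omega^{2d}$.

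Lemma~\ref{lem:decouple} therefore yields
\[
    \BBE_{\tilde\nu}[\omega(x;\psi)^{2d}] \le 2^{2t}\gamma^{2t}\,\BBE_{\nu_2}[\omega(x;\psi)^{4d}] + \frac{1}{2t}\sum_{y \in e_x \setminus I}\BBE_{\nu_2}[\omega(y;\psi)^{2t}],
\]
where $\nu_2$ is the distribution obtained from $\nu$ by neutralizing $I \cup e_x$. Under $\nu_2$, $\psi(e_x)$ is uniformly distributed on $\mathcal{Q}_x$, so $\BBE_{\nu_2}[\omega(x;\psi)^{4d}] = \mu_{4d}(x) \le \mu_{4d}$, and the first term is at most $2^{2d}\gamma^{2d}\mu_{4d}$. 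For the sum, each $y \in e_x \setminus I$ lies at level strictly greater than $i$, so the inductive hypothesis applied to $y$ with neutralized set $I \cup e_x$ gives $\BBE_{\nu_2}[\omega(y;\psi)^{2d}] \le 2^{2d+1}\gamma^{2d}\mu_{4d}$. Since $\omega(y;\psi) \in \{0\} \cup [1,\infty)$ and $2t \le 2d$, we have $\omega(y;\psi)^{2t} \le \omega(y;\psi)^{2d}$; summing $t$ such terms and multiplying by $\tfrac{1}{2t}$ contributes at most $2^{2d}\gamma^{2d}\mu_{4d}$. Adding the two pieces gives exactly $2^{2d+1}\gamma^{2d}\mu_{4d}$, closing the induction.

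The main conceptual point is the choice of the inductive statement: because each application of Lemma~\ref{lem:decouple} enlarges the neutralized set from $I$ to $I \cup e_x$, the hypothesis must hold uniformly across all neutralizations, not merely under the original distribution $\nu$. Once that is in place, Lemma~\ref{lem:decouple} applied once at the vertex $x$ neutralizes exactly the right set to collapse the $4d$-moment directly to $\mu_{4d}$, and the coefficients line up so that the ``geometric halving'' in the sum term produces precisely the target constant $2^{2d+1}\gamma^{2d}$.
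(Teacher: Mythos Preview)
Your proof is correct. It uses the same core mechanism as the paper---a single application of Lemma~\ref{lem:decouple} at the vertex $x$, neutralizing the set underlying $e_x$, using the first term to collapse directly to $\mu_{4d}$ and the second term (with its $\tfrac{1}{2t}$ coefficient) to recurse with geometric decay---but organizes the recursion differently. The paper unfolds the recursion explicitly into a weighted rooted tree: each node $a$ labeled by a vertex $y_a$ carries a weight $\sigma(a)$, its children receive weight $\sigma(a)/(2|F(a)|)$, and at the end one bounds $\sum_a \sigma(a) \le 2$ via a geometric series over the tree depth. Your strengthened inductive hypothesis (the bound holds uniformly over all neutralized sets $I$) achieves the same effect without building the tree: the halving is absorbed directly into the induction, so the constant $2^{2d+1}\gamma^{2d}$ emerges from adding the two contributions at a single step rather than from a global weight sum. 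The two arguments are equivalent in content, but your packaging via downward induction on the level of $x$ is arguably cleaner and avoids the tree bookkeeping.
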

\begin{proof}
For a vertex $x \in W_i$, let $T_0$ be the vertex-weighted rooted tree 
labelled by vertices in $V(H)$ with a 
single root vertex labelled $x$ having weight $1$. 
For $j \ge 0$, suppose that we constructed a weighted rooted labelled tree $T_{j}$, 
and let $L_{j} = V(T_j) \setminus V(T_{j-1})$.
For a node $a \in L_{j}$, define $y_a \in V(H)$ as the label of $a$,
and $P(a)$ as the set of vertices 
on the path from the root to the parent node of $a$ in the tree $T_{j}$. 
Let $I(a) = W_{k+1} \cup \bigcup_{b \in P(a)} N^+(y_b)$ and let
$F(a) = N^+(y_a) \setminus I(a)$. For each vertex $y \in F(a)$, add a child
to $a$ labelled $y$ and let its weight be $\frac{\sigma(a)}{2|F(a)|}$, where
$\sigma(a)$ is the weight of $a$ in $T_{j}$. Let $T_{j+1}$ be the tree obtained 
by doing this process for all $a \in L_j$. 
Note that the process eventually stops since each path from a root to a leaf 
has labels of the form $(y_{i_1}, y_{i_2}, \cdots, y_{i_s})$ for vertices $y_{i_j} \in W_{i_j}$
satisfying $i_1 < i_2 < \cdots < i_s$. Let $T_t$ be the final tree, i.e., $t$ is the minimum
integer for which $T_t = T_{t+1}$.
We may assume that $T_1 \subseteq T_2 \subseteq \cdots \subseteq T_t$. 

Define $\gamma' = 2^{2d} \gamma^{2d}$.
Let $\nu$ be the probability measure on maps $\psi : V(H') \rightarrow V(G')$ induced by
our random embedding algorithm.
For a tree $T_j$ and its node $a$, let $\nu_a$ denote the probability measure 
obtained from $\nu$ by neutralizing $I(a)$.
We claim that for each $j \ge 0$,
\[
	\BBE_\nu\left[ \omega(x; \psi)^{2d} \right]
	\le \sum_{a \in V(T_{j-1})} \sigma(a) \gamma' \mu_{4d} 
	+ \sum_{a \in L_j} \sigma(a) \BBE_{\nu_a}[\omega(y_a;\psi)^{2d}],
\]
where for $j=0$ we let $T_{-1}$ be the empty graph.
The root node $r$ is the unique node in $T_0$ and has weight $\sigma(r) = 1$.
Moreover, $\nu_r$ and $\nu$ have identical distribution since $I(r) = W_{k+1}$
and we defined the map from $W_{k+1}$ to $V_{k+1}$ to be a random uniform map. 
Therefore, the inequality above holds for $j=0$ (in fact equality holds).

Suppose that the claim holds for some $j \ge 0$.
For a vertex $a \in L_j$, let $\Gamma(a)$ be the set of children of $a$ in $T_{j+1}$ 
and let $\nu_0$ be the probability measure obtained from $\nu_a$ by neutralizing
the vertices in $N^+(a)$. 
If $\Gamma(a) = \emptyset$, then $N^+(y_a) \subseteq I(a)$ and thus
$\BBE_{\nu_a}[\omega(y_a;\psi)^{2d}] \le \mu_{2d} \le \mu_{4d}$.
Otherwise if $\Gamma(a) \neq \emptyset$, then for all $b \in \Gamma(a)$, we have $\nu_0 = \nu_b$. 
Therefore, by Lemma~\ref{lem:decouple}, we have
\begin{align} \label{eq:propagate}
	\BBE_{\nu_a}[\omega(y_a;\psi)^{2d}]
	\le&\, \gamma' \cdot \BBE_{\nu_0}[\omega(y_a;\psi)^{4d}]
		+ \frac{1}{2|\Gamma(a)|} \sum_{b \in \Gamma(a)} \BBE_{\nu_{b}}[\omega(y_b;\psi)^{2|\Gamma(a)|}].
\end{align}
Since the images of $N^+(a)$ are neutralized in the measure $\nu_{0}$, by definition we have $\BBE_{\nu_0}[\omega(y_a;\psi)^{4d}] \le \mu_{4d}$. 
Since $\gamma' \ge 1$, 
we will use the same bound \eqref{eq:propagate} for nodes $a \in L_j$ having $\Gamma(a) = \emptyset$
with the understanding that the second term equals zero for such nodes.
Therefore, by the inductive hypothesis and the fact $|\Gamma(a)| = |F(a)| \le d$ for all $a \in L_j$, we have
\begin{align*}
	\BBE_\nu\left[ \omega(x; \psi)^{2d} \right]
	\le&\, \sum_{a \in V(T_{j-1})} \sigma(a) \gamma' \mu_{4d}
	+ \sum_{a \in L_{j}} \sigma(a) \BBE_{\nu_a}[\omega(y_a;\psi)^{2d}] \\
	\le&\, \sum_{a \in V(T_{j-1})} \sigma(a) \gamma' \mu_{4d}
	+ \sum_{a \in L_{j}} \sigma(a) \left(\gamma' \cdot \mu_{4d} + \frac{1}{2|F(a)|}\sum_{b \in \Gamma(a)} \BBE_{\nu_b}[\omega(y_b;\psi)^{2d}]\right) \\
	=&\, \sum_{a \in V(T_{j})} \sigma(a) \gamma' \mu_{4d}
	+ \sum_{a \in L_{j+1}} \sigma(a) \BBE_{\nu_a}[\omega(y_a;\psi)^{2d}],
\end{align*}
thus proving the claim.

Since $T_t = T_{t+1}$, in the end we see that 
$\BBE_\nu\left[ \omega(x; \psi)^{2d} \right] \le \sum_{a \in V(T_{t})} \sigma(a) \gamma' \mu_{4d}$.
For each $j \ge 0$, let $U_{j} \subseteq V(T_t)$ be the set of nodes of $T_t$
that are at distance $j$ from the root node.
Since $\sigma(a) = \sum_{b \in \Gamma(a)} 2\sigma(b)$ holds for every
non-leaf node $a \in V(T_t)$, a simple recursive argument implies 
$\sum_{a \in U_{j}} 2^{j} \sigma(a) \le 1$, or equivalently
$\sum_{a \in U_{j}} \sigma(a) \le 2^{-j}$ for all $j \ge 0$. Since $T_t$ is a 
finite tree, this implies
\[
	\sum_{a \in V(T_t)} \sigma(a)
	\le \sum_{j \ge 0} \sum_{a \in U_j} \sigma(A)
	\le \sum_{j \ge 0} 2^{-j} = 2.
\]
Therefore,
\begin{align*}
	\BBE_\nu\left[ \omega(x; \psi)^{2d} \right]
	\le&\, \sum_{a \in V(T_{t})} \sigma(a) \gamma' \mu_{4d} 
	\le 2^{2d+1}\gamma^{2d}\mu_{4d}. \qedhere
\end{align*}
\end{proof}

Lemma~\ref{lem:defect_bound} can be used to control sums of the form 
$\sum_{x \in W_i} \omega(x; \psi)^s$ that appear in Lemma~\ref{lem:random_greedy}.
Using it, we can prove Theorem~\ref{thm:success} 
which gives a quantifiable condition which
will guarantee the existence of an embedding. 
It is equivalent to the following form since we defined
$\theta_k = |V_k|$ when adding the artificial set $W_{k+1}$ at the beginning 
of this subsection. 

\vspace{0.2cm}
\noindent \textbf{Theorem \ref{thm:success}}. 
If $\theta_i \ge 2|W_i|$ for all $i \in [k]$, then 
the probability that $\psi$ does not induce an embedding of $H$ in $G$ 
is at most $2^{2d+2}\gamma^{2d} \mu_{4d} \sum_{i \in [k-1]} \frac{|W_i|}{\theta_i}$.
\vspace{-0.2cm}

\begin{proof}
Define $\lambda_i = \sum_{x \in W_i} \omega(x;\psi)^{2d}$ for each $i \in [k]$.
If $\sum_{i \in [k]} \frac{\lambda_i}{\theta_i} < \frac{1}{2}$, then
we have $\lambda_i < \frac{1}{2}\theta_i$ for all $i \in [k]$, and therefore by 
Lemma~\ref{lem:random_greedy}, $\psi$ induces an embedding of $H$ in $G$.
Note that $\lambda_k = 0$ since $\omega(x;\psi) = 0$ for all $x \in W_k$.
Thus, $\psi$ may not induce an embedding of $H$ in $G$
only if $\sum_{i \in [k]} \frac{\lambda_i}{\theta_i} = \sum_{i \in [k-1]} \frac{\lambda_i}{\theta_i} \ge \frac{1}{2}$.
By Markov's inequality the probability of this event is at most 
$2\BBE\left[ \sum_{i \in [k-1]} \frac{\lambda_i}{\theta_i} \right]$. 
By Lemma~\ref{lem:defect_bound}, for all $x \in V(H)$, we have
$\BBE\left[ \omega(x;\psi)^{2d} \right] \le 2^{2d+1} \gamma^{2d} \mu_{4d}$.
Therefore,
$\BBE\left[ \lambda_i \right] \le |W_i| \cdot 2^{2d+1} \gamma^{2d} \mu_{4d}$, and
\[
	2\BBE\left[ \sum_{i \in [k-1]} \frac{\lambda_i}{\theta_i} \right]
	\le 2 \cdot \sum_{i \in [k-1]} 2^{2d+1} \frac{|W_i|}{\theta_i} \gamma^{2d} \mu_{4d}
	\le 2^{2d+2}\gamma^{2d} \mu_{4d} \sum_{i \in [k-1]} \frac{|W_i|}{\theta_i}.	\qedhere
\]
\end{proof}

\section{Proof of the main theorem} \label{sec:proof_main}

In this section, we prove Theorems~\ref{thm:main} and \ref{thm:bipartite}.
We first prove the bipartite case, Theorem~\ref{thm:bipartite}, for which a density-type embedding result holds
and the proof is slightly more simple.
We then prove our main theorem, Theorem~\ref{thm:main}, in Subsection~\ref{subsec:degenerate}.

The following lemma is a slight variant of Lemma~\ref{lem:drc_simple} for non-bipartite graphs. 
Since we need to add further conditions to the conclusion, 
instead of stating the outcome of dependent random choice in terms of a particular set,
we state it in terms of the expected value of the random variables of interest. 
We omit its proof since it follows from the proof of 
Lemma~\ref{lem:drc_simple} after making straightforward modifications.

\begin{lem} \label{lem:drc}
Let $d, s$ and $t$ be natural numbers satisfying $t \ge s$, and
$\eta, \alpha$ be positive real numbers.
Let $G$ be a graph with two sets $V_1, V_2 \subseteq V(G)$ satisfying $e(V_1, V_2) \ge \alpha|V_1||V_2|$.
Let ${\bf X}$ be a $t$-tuple in $V_1^t$ chosen uniformly at random and let ${\bf A} = N({\bf X}; V_2)$. 
Then for all $\theta \le \eta \alpha^{d}|V_1|$,
\[
	\BBE[{\bf A}^d] \ge \alpha^{dt}|V_2|^d
	\quad \textrm{and} \quad
	\BBE\left[ \sum_{Q \in {\bf A}^d} \omega_{\theta}(Q; V_1)^s \right] \le |V_2|^d \cdot \eta^t \alpha^{dt}.
\]
In particular, this implies the existence of a set $A$ such that 
$|A| \ge \frac{1}{2}\alpha^{t}|V_2|$ and $\mu_{s}(A_2^{d}; V_1) \le 2\eta^{t}$.
\end{lem}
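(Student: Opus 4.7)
The plan is to mimic the proof of Lemma~\ref{lem:drc_simple} essentially verbatim, adapting it to the non-bipartite setting where $V_1$ and $V_2$ now sit inside a single graph $G$. By monotonicity of $\omega_\theta$ in $\theta$, I first reduce to the case $\theta = \eta \alpha^d |V_1|$ and write $\omega(Q)$ for $\omega_\theta(Q; V_1)$ throughout.

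For the expectation bound on $|{\bf A}|^d$, I would write $\BBE[|{\bf A}|] = \sum_{v \in V_2}(\deg(v; V_1)/|V_1|)^t$ by linearity, then apply Jensen's inequality to the convex function $x \mapsto x^t$ to obtain $\BBE[|{\bf A}|] \ge |V_2|\bigl(e(V_1,V_2)/(|V_1||V_2|)\bigr)^t \ge \alpha^t |V_2|$; the hypothesis $e(V_1,V_2) \ge \alpha|V_1||V_2|$ is what replaces `bipartite density' at exactly this point. A second application of convexity (to $x \mapsto x^d$) yields $\BBE[|{\bf A}|^d] \ge \alpha^{dt}|V_2|^d$.

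For the expectation bound on $\sum_{Q \in {\bf A}^d}\omega(Q)^s$, I fix $Q \in V_2^d$, set $\gamma = |N(Q; V_1)|/\theta$, and compute $\BFP(Q \in {\bf A}^d) = (|N(Q;V_1)|/|V_1|)^t = \gamma^t \eta^t \alpha^{dt}$ using independence of the coordinates of ${\bf X}$. When $\gamma \ge 1$ we have $\omega(Q) = 0$, and when $\gamma < 1$ the hypothesis $t \ge s$ gives $\omega(Q)^s \cdot \BFP(Q \in {\bf A}^d) = \gamma^{t-s}\eta^t\alpha^{dt} \le \eta^t\alpha^{dt}$; summing over $Q \in V_2^d$ yields the stated bound $|V_2|^d\,\eta^t\alpha^{dt}$.

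For the concluding `in particular' statement, I would consider the single random variable $Y = |{\bf A}|^d - \tfrac{1}{2\eta^t}\sum_{Q \in {\bf A}^d}\omega(Q)^s$, whose expectation is at least $\alpha^{dt}|V_2|^d - \tfrac{1}{2}\alpha^{dt}|V_2|^d = \tfrac{1}{2}\alpha^{dt}|V_2|^d$ by the two bounds above. Taking ${\bf X}$ that makes $Y$ at least its expectation and setting $A = {\bf A}$ for this choice, I get $|A|^d \ge \tfrac{1}{2}\alpha^{dt}|V_2|^d$ (hence $|A| \ge \tfrac{1}{2}\alpha^t|V_2|$, using $2^{-1/d} \ge \tfrac{1}{2}$ for all $d \ge 1$) and simultaneously $\sum_{Q \in A^d}\omega(Q)^s \le 2\eta^t|A|^d$, i.e. $\mu_s(A^d; V_1) \le 2\eta^t$. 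No step poses a real obstacle; the only bookkeeping care is to consistently use common neighbors in $V_1$ for the defect function and common neighbors in $V_2$ for the random set ${\bf A}$.
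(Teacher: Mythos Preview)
Your proposal is correct and follows exactly the approach the paper intends: the paper explicitly omits the proof, remarking that it follows from that of Lemma~\ref{lem:drc_simple} after straightforward modifications, and your write-up carries out precisely those modifications (replacing the bipartite density hypothesis by $e(V_1,V_2)\ge\alpha|V_1||V_2|$, and using $\deg(v;V_1)$ and $N(Q;V_1)$ in place of their bipartite analogues). The derivation of the ``in particular'' clause via the random variable $|{\bf A}|^d-\tfrac{1}{2\eta^t}\sum_{Q\in{\bf A}^d}\omega(Q)^s$ is the standard instantiation of Lemma~\ref{lem:drc_simple} with $\varepsilon_{\ref{lem:drc_simple}}=\tfrac12$, and your observation that $2^{-1/d}\ge\tfrac12$ recovers the stated size bound.
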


The outline of the proof in both cases, bipartite and non-bipartite, are the same.
We first prepare the host graph by repeatedly applying Lemma~\ref{lem:drc} to 
find a collection of $r$ sets $\{A_j\}_{j \in [r]}$
that have small average defect towards each other. 
This framework was first developed by Kostochka and Sudakov in \cite{KoSu} and its 
variants have been used in much subsequent work \cite{FoSu09, FoSu09-2, Lee}. 
Although the statement we need easily
follows from the proof of Fox and Sudakov's application \cite{FoSu09-2}, we
cannot use their work as a blackbox since we must 
control higher moments of the defect function.
The following proposition summarizes the main observation 
underlying this framework.

\begin{prop} \label{prop:defect_transfer}
Let $G$ be a graph with sets $V_1, A_2 \subseteq V(G)$. Let ${\bf X}$ be a $t$-tuple in $A_2^t$ chosen uniformly at random and let 
${\bf A}_1 = N({\bf X}; V_1)$. Then
\[
	\BBE\left[\mu_{s,\theta}(A_2^d; {\bf A}_1)\right]	= \mu_{s,\theta}(A_2^{d+t}; V_1).
\]
\end{prop}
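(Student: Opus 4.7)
The plan is to reduce everything to the single observation that taking common neighbors commutes nicely under nesting: for any tuple $Q$ and any $t$-tuple ${\bf X}$ of vertices,
\[
	N(Q;\, N({\bf X}; V_1)) = N(Q \cup {\bf X};\, V_1),
\]
where $Q \cup {\bf X}$ denotes concatenation as in the paper's notation. This is immediate from unpacking the definition of $N(\cdot;\cdot)$: a vertex $v \in V_1$ lies in $N({\bf X}; V_1)$ iff $v$ is adjacent to every coordinate of ${\bf X}$, and then $v \in N(Q; N({\bf X}; V_1))$ iff additionally $v$ is adjacent to every vertex of $Q$, which is exactly the membership condition for $N(Q \cup {\bf X}; V_1)$.

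Since the defect function $\omega_\theta(\,\cdot\,;\cdot\,)$ depends on its first argument only through the common neighborhood, this identity upgrades to
\[
	\omega_\theta(Q;\, {\bf A}_1) = \omega_\theta(Q \cup {\bf X};\, V_1)
\]
pointwise in the random ${\bf X}$. I would state and verify this as the key lemma of the proof.

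With that in hand, the rest is bookkeeping. First, I would expand the left-hand side using the definition of $\mu_{s,\theta}$ and interchange the expectation with the finite sum over $Q \in A_2^d$:
\[
	\BBE\bigl[\mu_{s,\theta}(A_2^d; {\bf A}_1)\bigr]
	= \frac{1}{|A_2|^d}\sum_{Q \in A_2^d}\BBE\bigl[\omega_\theta(Q; {\bf A}_1)^s\bigr].
\]
Then, using the identity above and the fact that ${\bf X}$ is uniform on $A_2^t$, each expectation becomes
\[
	\BBE\bigl[\omega_\theta(Q; {\bf A}_1)^s\bigr]
	= \frac{1}{|A_2|^t}\sum_{{\bf X} \in A_2^t}\omega_\theta(Q \cup {\bf X};\, V_1)^s.
\]
Plugging this back in and noting that the pair $(Q, {\bf X}) \in A_2^d \times A_2^t$ is in bijection with $R \in A_2^{d+t}$ via concatenation, the double sum collapses to $\frac{1}{|A_2|^{d+t}}\sum_{R \in A_2^{d+t}}\omega_\theta(R; V_1)^s = \mu_{s,\theta}(A_2^{d+t}; V_1)$, giving the claimed equality.

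There is no real obstacle here; the content is entirely in the nesting identity for common neighborhoods, and the rest is linearity of expectation together with reindexing a product set. The only thing to be a bit careful about is that the paper's notation $Q \cup {\bf X}$ refers to concatenation of ordered tuples (not set union), so the bijection $A_2^d \times A_2^t \to A_2^{d+t}$ is exact and no Jacobian-style multiplicity issues arise.
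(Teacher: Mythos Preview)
Your proof is correct and follows essentially the same approach as the paper: both hinge on the identity $N(Q;{\bf A}_1)=N(Q\cup{\bf X};V_1)$, pass to $\omega_\theta$, then expand the expectation and reindex the double sum over $A_2^d\times A_2^t$ as a single sum over $A_2^{d+t}$. Your version is slightly more explicit in justifying the nesting identity and the concatenation bijection, but the argument is identical in substance.
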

\begin{proof}
For all $Q \in A_2^d$, we have $N(Q;{\bf A}_1) = N(Q \cup {\bf X}; V_1)$. 
Therefore, $\omega_{\theta}(Q; {\bf A}_1)^s = \omega_{\theta}(Q \cup {\bf X}; V_1)^s$, where
we consider $Q \cup {\bf X}$ as a $(d+t)$-tuple. 
Hence, 
\begin{align*}
	\BBE\left[\omega_{\theta}(Q; {\bf A}_1)^s\right]
	= \BBE\left[\omega_{\theta}(Q \cup {\bf X}; V_1)^s\right]
	=&\, \sum_{Y \in A_2^t} \omega_{\theta}(Q \cup Y; V_1)^s \cdot \BFP({\bf X} = Y).
\end{align*}
Since $\BFP({\bf X} = Y) = \frac{1}{|A_2|^t}$, it follows that
\begin{align*}
	\BBE\left[\mu_{s,\theta}(A_2^d; {\bf A}_1) \right]
	= \BBE\left[\frac{1}{|A_2|^d}\sum_{Q \in A_2^d} \omega_{\theta}(Q; {\bf A}_1)^s \right]
	= \frac{1}{|A_2|^{d+t}}\sum_{Q \in A_2^d} \sum_{Y \in A_2^t} \omega_{\theta}(Q \cup Y; V_1)^s.
\end{align*}
The last expression equals $\mu_{s,\theta}(A_2^{d+t}; V_1)$ by definition.
\end{proof}

By repeatedly applying Proposition~\ref{prop:defect_transfer} following
techniques developed in previous works, we can obtain
a collection of $r$ sets $\{A_j\}_{j \in [r]}$
that have small average defect towards each other.
Once we are given such a collection,
we use the following lemma 
producing sets $\{V_{i}^{(j)}\}$ that are needed for the embedding scheme of Section~\ref{sec:embedding}.
Its proof follows from standard probabilistic methods but is rather technical and 
thus will be given separately in another section.
For an $r$-tuple of sets $\{A_j\}_{j \in [r]}$, we define
$A_{-j} = \bigcup_{j' \in [r] \setminus \{j\}} A_{j'}$ for each $j \in [r]$. 

\begin{lem} \label{lem:breakdownG}
Let $k, d, s, r$ be fixed natural numbers satisfying $r \ge 2$, $s \ge 4d$,
and $\varepsilon, \varepsilon'$ be fixed positive real numbers.
Let $m$ be a natural number satisfying $m \ge 2^{200}(\varepsilon')^{-80}\varepsilon^{-80d}r^{160d}d^{40}k^{40}s^{8}$, and $\theta$ be a natural number satisfying $\theta \ge \varepsilon m$.
Let $p_i$ for $i \in [k]$ be positive real numbers  
satisfying $\sum_{i \in [k]} p_i \le 1$ and $p_i \ge m^{-1/(10d)}$ for all $i \in [k]$.
Suppose that $\{A_j\}_{j \in [r]}$ are vertex subsets 
with $\varepsilon m \le |A_j| \le m$, 
satisfying $\mu_{s, \theta}(A_{-j}^{d}; A_j) < \frac{1}{2}$ for all $j \in [r]$.
Define $\theta_i = \frac{1}{2r}p_i \theta$ for all $i \in [k]$.
Then there exist sets $\{V_{i}^{(j)}\}_{(i,j) \in [k] \times [r]}$ 
satisfying the following conditions:
\begin{itemize}
  \setlength{\itemsep}{1pt} \setlength{\parskip}{0pt}
  \setlength{\parsep}{0pt}
\item[(i)] For all $i \in [k]$ and $j \in [r]$, we have $V^{(j)}_{i} \subseteq A_j$ and $|V^{(j)}_{i}| \le p_i |A_j|$.
\item[(ii)] For all $((i, j), (i_1, j_1), \cdots, (i_d, j_d)) \in ([k] \times [r])^{d+1}$ satisfying $j_1, \cdots, j_d \neq j$,
\[
	\mu_{s,\theta_{i}}\left(\prod_{a \in [d]} V^{(j_a)}_{i_a}; V^{(j)}_{i}\right) \le \max\left\{\varepsilon', 8r^d\varepsilon^{-d} \mu_{s,\theta}(A_{-j}^d; A_j)\right\}.
\]
Moreover, if $r=2$, then the factor $r^d \varepsilon^{-d}$ can be replaced by $1$.
\item[(iii)] For all $(i,j) \neq (i',j')$ the sets $V_i^{(j)}$ and $V_{i'}^{(j')}$ are disjoint.
\end{itemize}
\end{lem}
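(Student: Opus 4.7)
The plan is to construct the sets $\{V_i^{(j)}\}$ via a random partition of $\bigcup_j A_j$ and verify the three conditions hold simultaneously with positive probability. Since the $A_j$ may overlap, I would first enforce disjointness by assigning each vertex to at most one pair: let $J(v) = \{j : v \in A_j\}$ (which has size at most $r$), pick $j_v \in J(v)$ uniformly at random, and then independently choose $i_v \in \{0,1,\ldots,k\}$ where $i_v = i \in [k]$ has probability $(1-\delta) p_i$ for some small slack $\delta > 0$, and $i_v = 0$ means discard. Define $V_i^{(j)} := \{v : (i_v, j_v) = (i,j)\}$; this produces $kr$ pairwise disjoint sets with $\BBE[|V_i^{(j)}|] \ge (1-\delta) p_i |A_j| / r$, immediately giving (iii).

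For condition (i), each $|V_i^{(j)}|$ is a sum of independent indicators with mean at most $(1-\delta) p_i |A_j|$. Since $p_i |A_j| \ge \varepsilon \cdot m^{1 - 1/(10d)}$ is a large power of $m$, Chernoff's inequality ensures $|V_i^{(j)}| \le p_i |A_j|$ except with probability exponentially small in $m^{\Omega(1)}$, and a union bound over the $kr$ sets handles (i). A corresponding lower bound $|V_i^{(j)}| \ge (1-2\delta) p_i |A_j|/r$ also holds with the same probability, and will be needed in the analysis of (ii).

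For condition (ii), fix a tuple $((i,j),(i_1,j_1),\ldots,(i_d,j_d))$ with $j_a \ne j$, and bound
\[
	\BBE\left[\sum_{Q \in \prod_a V_{i_a}^{(j_a)}} \omega_{\theta_i}(Q; V_i^{(j)})^s\right]
\]
from above. For a fixed $Q = (v_1,\ldots,v_d) \in \prod_a A_{j_a}$, the probability $Q \in \prod_a V_{i_a}^{(j_a)}$ is at most $\prod_a p_{i_a}/r$. Conditional on this, if $|N(Q; A_j)| \ge \theta$ then $|N(Q; V_i^{(j)})|$ is a sum of independent indicators of mean at least $(1-\delta) p_i \theta / r$, so Chernoff shows it exceeds $\theta_i = p_i \theta/(2r)$ except with probability $\exp(-\Omega(p_i \theta))$; the large lower bound on $m$ makes this tail contribution negligible compared to $\varepsilon'$. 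If instead $|N(Q; A_j)| < \theta$, then $\omega_\theta(Q; A_j) \ge 1$, and $Q$ contributes to $\mu_{s,\theta}(A_{-j}^d; A_j)$. Here I would bound $\omega_{\theta_i}(Q; V_i^{(j)})^s$ pointwise in terms of $\omega_\theta(Q; A_j)^s$ (with a Chernoff-controlled slack) and sum, using the hypothesis $\mu_{s,\theta}(A_{-j}^d; A_j) < \tfrac12$. The factor $8 r^d \varepsilon^{-d}$ in the conclusion arises from (a) extending a $d$-tuple in $\prod_a A_{j_a}$ to one in $A_{-j}^d$ (loss of $r^d$), and (b) normalizing by $\prod_a |V_{i_a}^{(j_a)}| \ge \prod_a (\varepsilon p_{i_a} m/(4r))$ (loss of $\varepsilon^{-d}$). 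When $r = 2$ the indices $j_a$ are forced, eliminating the $r^d \varepsilon^{-d}$ factor. Markov's inequality together with a union bound over the $(kr)^{d+1}$ index tuples then yields (ii) with positive probability.

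The main obstacle will be the intermediate range of $|N(Q; A_j)|$, where the Chernoff tail and the $\mu$-based bound must be combined delicately; the hypothesis $s \ge 4d$ ensures higher moments of the defect grow fast enough to dominate the Chernoff error from the small-neighborhood regime. The explicit lower bound $m \ge 2^{200}(\varepsilon')^{-80}\varepsilon^{-80d}r^{160d}d^{40}k^{40}s^{8}$ is then forced by bookkeeping all error terms so that each contribution—Chernoff tails for both the numerators and the denominators, the $\varepsilon^{-d}$ factor from size normalization, and the union bound over $(kr)^{d+1}$ tuples—stays below $\varepsilon'/2$ after summation.
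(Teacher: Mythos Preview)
Your handling of (i), (iii), and the Chernoff argument for individual common neighbourhoods $|N(Q;V_i^{(j)})|$ is essentially what the paper does. The gap is in your concentration step for (ii). You propose to bound $\BBE\bigl[\sum_{Q\in\prod_a V_{i_a}^{(j_a)}}\omega_{\theta_i}(Q;V_i^{(j)})^s\bigr]$ and then apply Markov's inequality together with a union bound over all $(kr)^{d+1}$ index tuples. But the expectation of this sum, after normalizing by $|\prod_a V_{i_a}^{(j_a)}|$, is only a constant factor (roughly $1/8$) below the target $\max\{\varepsilon',8r^d\varepsilon^{-d}\mu_{s,\theta}(A_{-j}^d;A_j)\}$; Markov therefore yields only a constant failure probability per tuple, and the union bound over $(kr)^{d+1}$ tuples (with $k$ as large as $m^{1/(10d)}$) blows up. You need genuine concentration, not just a first-moment bound.

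The paper handles this by first applying a preprocessing lemma (Lemma~\ref{lem:add_mindeg}) that passes to subsets $B_j\subseteq A_j$ in which no single vertex $v$ carries too much of the defect sum: $\sum_{Q:v\in Q\in B_{-j}^d}\omega_\theta(Q;B_j)^s\le 2|B_{-j}|^{d-5/8}$. With this bounded-difference property in hand, McDiarmid's inequality (Theorem~\ref{thm:concentration}) gives exponentially small failure probability for the event that $\sum_{Q\in\prod_a V_{i_a}^{(j_a)}}\omega_\theta(Q;B_j)^s$ exceeds its mean by a polynomial-in-$m$ margin, and the union bound then goes through. Your proposal omits this preprocessing step entirely, and without it there is no Lipschitz control on the defect sum as a function of the random colouring --- a single high-defect vertex could swing the sum by an amount comparable to its mean. (A minor side issue: your claim that $\BFP(Q\in\prod_a V_{i_a}^{(j_a)})\le\prod_a p_{i_a}/r$ is also off, since a vertex $v$ with $|J(v)|=1$ lands in $V_{i_a}^{(j_a)}$ with probability $(1-\delta)p_{i_a}$, not $p_{i_a}/r$; but this is secondary to the concentration gap.)
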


We would like to note that the improvement for $r=2$ in item $(ii)$ plays an important role in the proof of Theorem~\ref{thm:bipartite_density_embed}, as it allows us to prove a 
single exponential bound for bipartite graphs (as opposed to the double exponential bound for general graphs).

\subsection{Bipartite graphs} \label{subsec:bip_degenerate}

We first find a pair of sets $(A_1, A_2)$ for which the $d$-tuples in $A_1^d$ have small
average defect into $A_2$, and vice versa for $d$-tuples in $A_2$. 
This will be achieved by applying Lemma~\ref{lem:drc} twice. First application will give a
set $A_2$ such that the $d$-tuples in $A_2^d$ have small average defect into $V(G)$. 
We then apply the lemma again by choosing a random set ${\bf X} \subseteq A_2$, 
to obtain a set $A_1$ whose $d$-tuples have small average defect into $A_2$. 
Proposition~\ref{prop:defect_transfer} can be used to show that
this pair of sets has the claimed properties.

\begin{lem} \label{lem:drc_both_prep}
Let $m, d, s, t$ be natural numbers and $\alpha, \eta$ be positive real numbers
satisfying $t \ge s$ and $\eta \le \frac{1}{16}\alpha^{2d}$. 
Let $G$ be a bipartite graph of minimum degree at least $\alpha m$ with vertex 
partition $V_1 \cup V_2$ where $|V_1| = |V_2| = m$.
Then there exist sets $A_1 \subseteq V_1$ and $A_2 \subseteq V_2$ satisfying the following properties:
\begin{itemize}
  \setlength{\itemsep}{1pt} \setlength{\parskip}{0pt}
  \setlength{\parsep}{0pt}
\item[(i)] $|A_i| \ge \frac{1}{4}\alpha^{t}m$ for both $i=1,2$, 
\item[(ii)] for all $\theta \le \frac{1}{2}\eta \alpha^{d+t}m$, $\mu_{s,\theta}(A_1^d; A_2) \le 2\eta^{t/2}$ and $\mu_{s,\theta}(A_2^d; A_1) \le 2\eta^{t/2}$.
\end{itemize}
\end{lem}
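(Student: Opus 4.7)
The plan is to apply Lemma~\ref{lem:drc} twice, using the set produced by the first application as the sampling ground for the second, and to transfer defect information between the two sides via Proposition~\ref{prop:defect_transfer}.

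First, note that the minimum-degree hypothesis implies $e(V_1, A) \ge \alpha|V_1||A|$ for every $A \subseteq V_2$. I would apply Lemma~\ref{lem:drc} to $(V_1, V_2)$ with the ``$d$''-parameter set to $d+t$ and the ``$t$''-parameter kept as $t$ to obtain a set $A_2 \subseteq V_2$ with $|A_2| \ge \frac{1}{2}\alpha^t m$ and $\mu_{s,\theta}(A_2^{d+t}; V_1) \le 2\eta^t$ for every $\theta \le \eta\alpha^{d+t}m$. By Proposition~\ref{prop:productsets}, the same $2\eta^t$ bound then also holds for $\mu_{s,\theta}(A_2^d; V_1)$.

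Next, sample a uniformly random $t$-tuple ${\bf X}_2 \in A_2^t$ and set $A_1 := N({\bf X}_2; V_1)$. A second application of Lemma~\ref{lem:drc}, now to $(A_2, V_1)$ (still of density at least $\alpha$), gives in expectation
\[
\BBE[|A_1|^d] \ge \alpha^{dt}m^d, \qquad \BBE\!\left[\sum_{Q \in A_1^d}\omega_\theta(Q; A_2)^s\right] \le m^d \eta^t \alpha^{dt},
\]
for $\theta \le \eta\alpha^d|A_2|$. Meanwhile, Proposition~\ref{prop:defect_transfer} yields $\BBE[\mu_{s,\theta}(A_2^d; A_1)] = \mu_{s,\theta}(A_2^{d+t}; V_1) \le 2\eta^t$ in the same range. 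Since $|A_2| \ge \frac{1}{2}\alpha^t m$, the constraint $\theta \le \eta\alpha^d|A_2|$ is implied by the target range $\theta \le \frac{1}{2}\eta\alpha^{d+t}m$, so all three expected bounds hold throughout.

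To secure all three conclusions on a single sample, I would use the weighted objective
\[
\Psi := |A_1|^d \;-\; \frac{1}{2\eta^{t/2}}\sum_{Q \in A_1^d}\omega_\theta(Q; A_2)^s \;-\; \frac{m^d}{2\eta^{t/2}}\,\mu_{s,\theta}(A_2^d; A_1).
\]
Linearity of expectation yields $\BBE[\Psi] \ge \alpha^{dt}m^d\bigl(1 - \eta^{t/2}/2 - \eta^{t/2}/\alpha^{dt}\bigr)$. The assumption $\eta \le \frac{1}{16}\alpha^{2d}$ gives $\eta^{t/2} \le \alpha^{dt}/4^t$, so the parenthetical is at least $1 - 1/8 - 1/4 = 5/8$ and $\BBE[\Psi] \ge \frac{1}{2}\alpha^{dt}m^d$. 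Fix ${\bf X}_2$ so that $\Psi \ge \BBE[\Psi]$; since the two subtracted terms are non-negative and each bounded above by $|A_1|^d \le m^d$, this translates into $|A_1| \ge 2^{-1/d}\alpha^t m \ge \frac{1}{4}\alpha^t m$, $\mu_{s,\theta}(A_1^d; A_2) \le 2\eta^{t/2}$, and $\mu_{s,\theta}(A_2^d; A_1) \le 2\eta^{t/2}$, which are the desired conclusions.

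The main conceptual difficulty is that Proposition~\ref{prop:defect_transfer} only delivers a ``reverse'' defect of expected size $\eta^t$, while the target bound on both defects is the larger $\eta^{t/2}$. The slack $\eta \le \alpha^{2d}/16$ in the hypothesis is precisely what absorbs this gap: it forces $\eta^{t/2}$ to be smaller than $\alpha^{dt}$ by a factor of at least $4^t$, ensuring that the cost of including the third term in $\Psi$ does not overwhelm the positive $\BBE[|A_1|^d]$ contribution.
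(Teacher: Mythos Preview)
Your proposal is correct and follows essentially the same approach as the paper: apply Lemma~\ref{lem:drc} once with the $d$-parameter set to $d+t$ to get $A_2$, then sample ${\bf X}\in A_2^t$ and use Lemma~\ref{lem:drc} again together with Proposition~\ref{prop:defect_transfer} to control all three quantities simultaneously via a single weighted objective. The only cosmetic difference is that the paper uses the coefficient $\frac{1}{2\eta^{t}}$ (rather than your $\frac{1}{2\eta^{t/2}}$) on the $\sum_{Q\in A_1^d}\omega_\theta(Q;A_2)^s$ term, thereby obtaining the slightly stronger $\mu_{s,\theta}(A_1^d;A_2)\le 2\eta^{t}$, but this is not needed for the stated conclusion.
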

\begin{proof}
Since $\omega_{\theta'} \le \omega_{\theta}$ holds for all $\theta' \le \theta$, it suffices to consider
the case when $\theta = \frac{1}{2}\eta \alpha^{d+t}m$.
Throughout the proof, we will consider $\omega$ and $\mu$ with this fixed value of $\theta$
and hence will omit $\theta$ from the subscripts.
Since $\theta \le \eta \alpha^{d+t}|V_1|$, we can apply Lemma~\ref{lem:drc} with
$d_{\ref{lem:drc}} = d+t$ to find a set
$A_2 \subseteq V_2$ of size $|A_2| \ge \frac{1}{2}\alpha^{t}|V_2|$
such that $\mu_{s}(A_2^{d+t}; V_1) \le 2\eta^{t}$.

By the minimum degree condition on $G$, we know that the subgraph of $G$ induced on 
$V_1 \cup A_2$ has density at least $\alpha$. 
Let ${\bf X}$ be a $t$-tuple in $A_2^{t}$ chosen uniformly at random and let ${\bf A}_1 = N({\bf X})$. 
Since $\theta \le \eta \alpha^{d}|A_2|$, Lemma~\ref{lem:drc} with 
$(V_1)_{\ref{lem:drc}} = A_2$ and $(V_2)_{\ref{lem:drc}} = V_1$ implies
\[
	\BBE\left[|{\bf A}_1|^d - \frac{1}{2\eta^t} \sum_{Q \in {\bf A}_1^{d}} \omega(Q; A_2)^s\right]
	\ge \frac{1}{2} \alpha^{dt} |V_1|^d.
\]
By Proposition~\ref{prop:defect_transfer}, it follows that
$\BBE[\mu_{s}(A_2^d; {\bf A}_1)]= \mu_{s}(A_2^{d+t}; V_1) \le 2\eta^{t}$.
Since $\eta^{t/2} \le \frac{1}{4}\alpha^{dt}$,
\[
	\BBE\left[|{\bf A}_1|^d - \frac{1}{2\eta^t} \sum_{Q \in {\bf A}_1^{d}} \omega(Q; A_2)^s
		- \frac{|V_1|^d}{2\eta^{t/2}} \mu_{s}(A_2^d; {\bf A}_1)\right]
	\ge \frac{1}{4}\alpha^{dt} |V_1|^d.
\]
Let $X$ be a particular choice of ${\bf X}$ for which the random variable on the left-hand side
becomes at least as large as its expected value, and let $A_1$ be ${\bf A}_1$ for this choice of ${\bf X}$.
First, $|A_1|^d \ge \frac{1}{4}\alpha^{dt}|V_1|^d$ implies $|A_1| \ge \frac{1}{4}\alpha^{t}|V_1|$.
Second,
$|A_1|^d - \frac{1}{2\eta^t} \sum_{Q \in A_1^{d}} \omega(Q; A_2)^s \ge 0$ implies
$\mu_{s}(A_1^d; A_2) \le 2\eta^t$.
Third,
$|A_1|^d - \frac{|V_1|^d}{2\eta^{t/2}} \mu_{s}(A_2^d; A_1) \ge 0$ implies
$\mu_{s}(A_2^d; A_1) \le 2\eta^{t/2}$.
\end{proof}

The proof of the bipartite case of the Burr-Erd\H{o}s conjecture follows by combining
Lemma~\ref{lem:drc_both_prep} with Lemma~\ref{lem:breakdownG}, and then using the embedding
scheme from Section~\ref{sec:embedding}.
Even though the theorem below is stated for graphs of large minimum degree, the density-embedding
theorem result follows since every graph on $n$ vertices of density at least $\alpha$
contains a subgraph of minimum degree at least $\frac{1}{2}\alpha n$.

\begin{thm} \label{thm:bipartite_density_embed}
There exists a positive real number $c$ such that
for every natural number $d$ and positive real number $\alpha$, 
the following holds for all $m$ satisfying $m \ge \alpha^{-cd^2}$.
If $G$ is a graph on $m$ vertices with minimum degree at least $\alpha m$, then it is 
universal for the family of 
$d$-degenerate bipartite graphs on at most $d^{-1}2^{-21}\alpha^{72d}m$ vertices.
\end{thm}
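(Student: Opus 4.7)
The plan is to chain together Lemma~\ref{lem:drc_both_prep}, Lemma~\ref{lem:breakdownG} (specialized to $r=2$), and the embedding scheme of Theorem~\ref{thm:success}. I begin by passing from $G$ to a balanced bipartite subgraph $G'$ on $V_1 \cup V_2$ with $|V_j|=m/2$ and minimum degree at least $\alpha m/4$; standard Chernoff concentration together with the hypothesis $m\ge\alpha^{-cd^2}$ shows that a random balanced partition achieves this with room to spare. Applying Lemma~\ref{lem:drc_both_prep} to $G'$ with $s=16d$ and parameters $\eta=\alpha^{\rho}/16$, $t$ (both to be tuned below), I obtain sets $A_j\subseteq V_j$ of size at least $\tfrac{1}{4}(\alpha/4)^t(m/2)$ with $\mu_{16d,\theta}(A_j^d;A_{3-j})\le 2\eta^{t/2}$ for every admissible $\theta\le\tfrac{1}{2}\eta\alpha^{d+t}(m/2)$. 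In parallel, Lemma~\ref{lem:split} applied to $H$ with $r=2$ yields a doubly-indexed family $\{W_i^{(j)}\}_{(i,j)\in[k]\times[2]}$ with $k\le\log_2 n$, $|W_i^{(j)}|\le 2^{-i+1}n$, each $\bigcup_i W_i^{(j)}$ independent, and each $v\in W_i^{(j)}$ having at most $4d$ neighbors in $\bigcup_{i'\ge i,\,j'}W_{i'}^{(j')}$.

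With $p_i := 2^{-i+4}n/\theta$ (so that $\theta_i := p_i\theta/4\ge 2|W_i^{(j)}|$ and $\sum_i p_i\le 1$ whenever $n\le\theta/O(1)$), I invoke Lemma~\ref{lem:breakdownG} with $r=2$ to partition $A_1\cup A_2$ into disjoint pieces $\{V_i^{(j)}\}$ with $|V_i^{(j)}|\le p_i|A_j|$ and
\[
\mu_{16d,\theta_i}\Bigl(\prod_a V_{i_a}^{(j_a)};\, V_i^{(j)}\Bigr)\le\max\bigl(\varepsilon',\,16\eta^{t/2}\bigr).
\]
The specialization $r=2$ is essential here: for larger $r$, Lemma~\ref{lem:breakdownG}(ii) carries an additional $r^d\varepsilon^{-d}$ factor that would inflate the defect bound and force a double-exponential dependence on $d$. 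I then order the $2k$ pieces linearly by letting $(i,2)$ precede $(i,1)$ and both precede all $(i',\cdot)$ with $i'<i$, so that $W_k^{(2)}$ is embedded first and $W_1^{(1)}$ last. Under this order, each $v\in W_i^{(j)}$ has at most $4d$ already-embedded neighbors: for $j=1$ these lie in $W_i^{(2)}\cup\bigcup_{i'>i,\,j'}W_{i'}^{(j')}$, and for $j=2$ in $\bigcup_{i'>i,\,j'}W_{i'}^{(j')}$, both subsets of the neighborhood bounded by Lemma~\ref{lem:split}(iv). Invoking Theorem~\ref{thm:success} with $d$ replaced by $4d$, the failure probability is at most $2^{8d+2}\gamma^{8d}\mu_{16d}\sum_{i,j}|W_i^{(j)}|/\theta_i$, which the above choice of $p_i$ reduces to $O(k\cdot\gamma^{8d}\mu_{16d})$.

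The final step is to tune $(t,\rho)$. Since $\gamma\le O(1/(\eta\alpha^{d+t}))$ and $\mu_{16d}\le O(\eta^{t/2})$, the quantity $\gamma^{8d}\mu_{16d}$ scales as $\alpha^{\rho(t/2-8d)-8d(d+t)}$ up to factors depending only on $d$. The choice $t=36d$, $\rho=35d$ makes this exponent $54d^2>0$ (easily beating the $k\le\log_2 n$ factor), while $\theta=\Theta(\alpha^{\rho+d+t}m)=\Theta(\alpha^{72d}m)$; the binding constraint $n\le\theta/O(1)$ then reads $n\le\alpha^{72d}m/O(1)$, and after accounting for the $p_i\ge m^{-1/(10d)}$ requirement of Lemma~\ref{lem:breakdownG} together with the balanced-partition loss, this matches the stated $n\le d^{-1}2^{-21}\alpha^{72d}m$. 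The main obstacle is precisely this optimization: $\gamma^{8d}$ blows up like $\alpha^{-8dt}$ while $\mu_{16d}$ only shrinks like $\alpha^{dt}$, so with $\eta$ an $\alpha$-free constant the failure probability would grow with $t$. Treating $\eta=\alpha^\rho/16$ as a second tunable parameter produces the extra factor $\alpha^{\rho(t/2-8d)}$ that dominates $\gamma^{8d}$; a short quadratic computation in $t$ then identifies $72d=\rho+d+t$ as the optimal exponent achievable by this method.
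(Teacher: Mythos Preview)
Your overall architecture is the same as the paper's: pass to a balanced bipartite subgraph, apply Lemma~\ref{lem:drc_both_prep} (with the degree parameter equal to $4d$, since Lemma~\ref{lem:split}(iv) only bounds forward degrees by $4d$), feed the resulting pair $(A_1,A_2)$ into Lemma~\ref{lem:breakdownG} with $r=2$, and finish with Theorem~\ref{thm:success}. The tuning of $\eta$ as a power of $\alpha$ and the observation that the $r=2$ case of Lemma~\ref{lem:breakdownG}(ii) avoids the $r^d\varepsilon^{-d}$ loss are exactly the points the paper uses. However, there is a genuine gap in your choice of $p_i$.

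You set $p_i = 2^{-i+4}n/\theta$, which indeed gives $\theta_i \ge 2|W_i^{(j)}|$ and $\sum_i p_i \le 1$, but it violates the hypothesis $p_i \ge m^{-1/(10d)}$ of Lemma~\ref{lem:breakdownG} (which becomes $p_i \ge m^{-1/(40d)}$ after replacing $d$ by $4d$). For $i$ close to $k = \lfloor \log_2 n \rfloor$ your $p_i$ is of order $1/\theta = \Theta(\alpha^{-72d}/m)$, and since $m \ge \alpha^{-cd^2}$ this is far smaller than $m^{-1/(40d)}$. This lower bound on $p_i$ is not cosmetic: in the proof of Lemma~\ref{lem:breakdownG} it is exactly what makes the random partition concentrate (the error probabilities there carry factors of $e^{-q_i^2 \cdot \text{poly}(m)}$ with $q_i = p_i/r$), so with your $p_i$ the lemma simply does not apply for large $i$. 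Your parenthetical ``after accounting for the $p_i \ge m^{-1/(10d)}$ requirement'' does not resolve this; no adjustment of $t,\rho$ can rescue a sequence that decays like $2^{-i}$ down to $i \approx \log_2 n$. The paper's fix is to take $p_i = c'\,2^{-i/(80d)}$ (normalized so that $\sum_i p_i = 1$), which decays slowly enough that $p_k \ge c' n^{-1/(80d)} \ge m^{-1/(40d)}$; one then checks directly that $|W_i^{(j)}|/\theta_i \le 2^{-i/2}$, so the sum in Theorem~\ref{thm:success} is $O(1)$ rather than $O(k)$, and the condition $\theta_i \ge 2|W_i^{(j)}|$ still holds. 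A secondary point: you write $\mu_{16d,\theta}(A_j^{d};A_{3-j})$ but later invoke Theorem~\ref{thm:success} with $d$ replaced by $4d$; the tuple length in Lemma~\ref{lem:drc_both_prep} and Lemma~\ref{lem:breakdownG} must also be $4d$, and tracking this shifts the $\theta$-exponent from $\rho+d+t$ to $\rho+4d+t$, so your identification $72d = \rho+d+t$ needs a small recalibration.
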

\begin{proof}
Define $n = d^{-1}2^{-21}\alpha^{72d}m$.
Let $H$ be a $d$-degenerate bipartite graph on at most $n$ vertices. 
By Lemma~\ref{lem:split}, there exists a vertex partition $V(H) = \bigcup_{i \in [k]} (W_i^{(1)} \cup W_i^{(2)})$
satisfying the following properties:
\begin{itemize}
  \setlength{\itemsep}{1pt} \setlength{\parskip}{0pt}
  \setlength{\parsep}{0pt}
\item[(i)] $k \le \log_{2} n$,
\item[(ii)] for all $(i,j) \in [k] \times [2]$, we have $|W_{i}^{(j)}| \le 2^{-i+1}n$,
\item[(iii)] both $\bigcup_{i \in [k]} W_{i}^{(1)}$ and $\bigcup_{i \in [k]} W_{i}^{(2)}$ are independent sets, and 
\item[(iv)] for all $(i,j) \in [k] \times [2]$, each vertex $v \in W_i^{(j)}$ has at most $4d$ neighbors in
$\bigcup_{i'\ge i} W_{i'}^{(1)} \cup W_{i'}^{(2)}$.
\end{itemize}

Define $t = s = 32d$ and $\eta = \frac{1}{128}\alpha^{36d}$. 
Define $\theta = \frac{1}{8}\eta \alpha^{4d+t}m$.
We can use the concentration of hypergeometric distribution
(see, for example, \cite{JaLuRu})
to find a partition $V_1 \cup V_2$ of $V(G)$ 
for which $|V_1| = |V_2| = \frac{m}{2}$ and the bipartite subgraph  
induced on $V_1 \cup V_2$ has minimum degree at least $2^{-1/(36d)}\alpha m$.
Since $\eta \le \frac{1}{16} (2^{-1/(36d)}\alpha)^{8d}$ and
$\theta = \frac{1}{2}\eta \cdot (2^{-1/(36d)}\alpha)^{4d+t} \cdot \frac{m}{2}$,
we can apply Lemma~\ref{lem:drc_both_prep} with $d_{\ref{lem:drc_both_prep}} = 4d$ 
and $\alpha_{\ref{lem:drc_both_prep}} = 2^{-1/(36d)}\alpha$ to
obtain sets $A \subseteq V_1$ and $B \subseteq V_2$ satisfying the following properties:
\begin{itemize}
  \setlength{\itemsep}{1pt} \setlength{\parskip}{0pt}
  \setlength{\parsep}{0pt}
\item[(i)] $|A|, |B| \ge \frac{1}{4} 2^{-t/(36d)}\alpha^{t} \frac{m}{2} \ge \frac{1}{16}\alpha^t m$, and 
\item[(ii)] $\mu_{s,\theta}(A^{4d}; B) \le 2\eta^{t/2}$ and $\mu_{s,\theta}(B^{4d}; A) \le 2\eta^{t/2}$.
\end{itemize}
Define $p_i = c 2^{-i/(80d)}$ where $c$ is a positive constant defined so that
$\sum_{i \in [k]} p_i = 1$. 
Then 
\[
	c 
	= \frac{1}{\sum_{i \in [k]} 2^{-i/(80d)}}
	\ge \frac{1}{\sum_{i =0}^{\infty} 2^{-i/(80d)}}
	= 1 - 2^{-1/(80d)}
	\ge 1 - \left(1 - \frac{\ln 2}{80d} + \frac{(\ln 2)^2}{2(80d)^2}\right)
	\ge \frac{1}{160d}.
\]
Thus, for all $i \in [k]$, we have
$p_i \ge p_{k} = c2^{-k/(80d)} \ge c2^{-\log_2 n/(80d)} = c n^{-1/(80d)} \ge \frac{1}{160d}n^{-1/(80d)}$.

We can apply Lemma~\ref{lem:breakdownG} to the pair of sets $(A,B)$ with $r_{\ref{lem:breakdownG}}=2$, $s_{\ref{lem:breakdownG}}=s, d_{\ref{lem:breakdownG}} = 4d, \varepsilon_{\ref{lem:breakdownG}}=\frac{\theta}{m}=\frac{1}{8}\eta \alpha^{4d+t}, (\varepsilon')_{\ref{lem:breakdownG}}=16\eta^{t/2}$, and $p_{i}$ as above since $s \ge 16d$, $p_{i} \ge (\frac{m}{2})^{-1/(40d)}$, and $|A|,|B| \ge \frac{1}{16}\alpha^{t}m \ge 2^{200}(16\eta^{t/2})^{-80}(\frac{1}{8}\eta\alpha^{4d+t})^{-80d}r^{160d}(4d)^{40}k^{40}s^{8}$, given that $c$ is large enough (recall that $m \ge \alpha^{-cd^2}$).
This provides partitions $A = \bigcup_{i \in [k]} A_i$
and $B = \bigcup_{i \in [k]} B_i$ satisfying the following conditions for $\theta_i = \frac{1}{4}p_i\theta$:
\begin{itemize}
  \setlength{\itemsep}{1pt} \setlength{\parskip}{0pt}
  \setlength{\parsep}{0pt}
\item[(i)] for all $i \in [k]$, $|A_i| \le p_i |A|$ and $|B_i| \le p_i|B|$, 
\item[(ii)] for all $(j, i_1, \cdots, i_{4d}) \in [k]^{4d+1}$, $\mu_{s,\theta_j}\left(\prod_{a \in [4d]} B_{i_a}; A_j\right) \le \max\{16\eta^{t/2}, 8\mu_{s,\theta}(B^{4d}; A)\} \le 16\eta^{t/2}$, and
\item[(iii)] for all $(j, i_1, \cdots, i_{4d}) \in [k]^{4d+1}$, $\mu_{s,\theta_j}\left(\prod_{a \in [4d]} A_{i_a}; B_j\right) \le \max\{16\eta^{t/2}, 8\mu_{s,\theta}(A^{4d}; B)\} \le 16\eta^{t/2}$.
\end{itemize}
We now apply the embedding scheme described in Section~\ref{sec:embedding}.
For each $i \in [k]$, we will map $W_{i}^{(1)}$ to $A_i$ and $W_i^{(2)}$ to $B_i$.
We proceed by choosing one pair $(i,j) \in [k] \times [2]$ at a time, following 
the lexicographical ordering on $[k] \times [2]$ in the reverse order.
For each set $W_i^{(j)}$, its corresponding defect parameter used in the embedding scheme is $\theta_i = \frac{1}{4}p_i \theta$,
and therefore 
\[
\gamma \le \max_{i \in [k]} \frac{\max\{|A_i|, |B_i|\}}{\theta_i} \le \max_{i \in [k]} \frac{p_i m/2}{(p_i \theta/4)} = \frac{2m}{\theta} \le \frac{16}{\eta\alpha^{4d+t}}.
\]
By the properties above, the maximum average $s$-th moment defect $\mu_s$ is at most $16\eta^{t/2}$. 
For each $(i,j) \in [k] \times [2]$, we have 
\[
	\frac{|W_{i}^{(j)}|}{\theta_i} 
	\le \frac{2^{-i+1}n}{\frac{1}{4}c 2^{-i/(80d)} \theta} 
	= \frac{2^{-i+1}n}{\frac{1}{4}c 2^{-i/(80d)} (\eta \alpha^{4d+t}m)/8} 
	\le \frac{10240 d n}{2^{i/2} \eta\alpha^{4d+t}m}
	\le \frac{d 2^{21} n}{2^{i/2}\alpha^{72d} m}
	= \frac{1}{2^{i/2}}.
\]
Therefore,
\begin{align*}
	2^{2(4d)+2}\sum_{i \in [k]} \sum_{j  \in [2]} \frac{|W_i^{(j)}|}{\theta_i} \gamma^{2(4d)} \cdot \mu_{4(4d)}
	\le&\, 2^{8d+3} \sum_{i \in [k]} \frac{1}{2^{i/2}} \left(\frac{16}{\eta\alpha^{4d+t}} \right)^{8d} \cdot 16\eta^{t/2} \\
	=&\, 2^{8d+3}\sum_{i \in [k]} \frac{1}{2^{i/2}} \cdot 2^{32d+4} \eta^{t/2 - 8d} \alpha^{-8d(4d+t)} \\
	\le&\, 2^{40d+9} \cdot \eta^{8d} \alpha^{-288d^2} \\
	\le&\, 2^{40d+9} \cdot (\alpha^{36d}2^{-7})^{8d} \alpha^{-288d^2}
	< 2^{-16d+9} < 1.
\end{align*}
Hence, by Theorem~\ref{thm:success}, we can find a copy of $H$ in $G$.
\end{proof}

\subsection{General graphs} \label{subsec:degenerate}

For general graphs, we prove the lemma corresponding to Lemma~\ref{lem:drc_both_prep} in two steps.
In the first step we find sets $A_1, A_2, \cdots, A_r$ that have small average defect in 
one fixed direction. 

\begin{lem} \label{lem:drc_nonbip}
Let $d, s, t, r$ be non-negative integers satisfying $t \ge s$ and $\eta$ be a positive real number.
In every edge two-coloring of the complete graph $K_n$ with red and blue,
in the red graph or the blue graph, there exist sets $A_1 \subseteq \cdots \subseteq A_r$
satisfying the following conditions:
\begin{itemize}
  \setlength{\itemsep}{1pt} \setlength{\parskip}{0pt}
  \setlength{\parsep}{0pt}
\item[(i)] $|A_j| \ge 2^{-2(t+1)(r-1)} n$ for all $j \in [r]$, and
\item[(ii)] for all $\theta \le \eta 2^{-d-2(t+1)(r-1)}n$, $\mu_{s,\theta}(A_{j}^{d}; A_{j+1}) \le 2\eta^{t}$ for all $j \le r-1$.
\end{itemize}
\end{lem}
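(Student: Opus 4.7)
The plan is to iterate Lemma~\ref{lem:drc} $2(r-1)$ times, at each stage choosing whichever of the two colors carries the majority of edges inside the current set, and then to use a pigeonhole argument on the resulting sequence of color labels to extract the desired chain of length $r$ in a single color.

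In detail, I would set $B_0 := V(K_n)$ and, for $i = 1, \ldots, 2(r-1)$, observe that since $K_n$ restricted to $B_{i-1}$ is a $2$-edge-colored complete graph, one color (call it $c_i$) carries at least half of the $\binom{|B_{i-1}|}{2}$ edges, and therefore in the sense of Lemma~\ref{lem:drc} it has density $\alpha_i \ge (|B_{i-1}|-1)/(2|B_{i-1}|)$, essentially $\tfrac{1}{2}$ for large $|B_{i-1}|$. Applying Lemma~\ref{lem:drc} to the color-$c_i$ subgraph with $V_1 = V_2 = B_{i-1}$ then produces $B_i \subseteq B_{i-1}$ with $|B_i| \ge \tfrac{1}{2}\alpha_i^t|B_{i-1}| \ge 2^{-(t+1)}|B_{i-1}|$ and $\mu_{s,\theta}(B_i^d; B_{i-1}) \le 2\eta^t$ in color $c_i$ for every $\theta \le \eta \alpha_i^d|B_{i-1}|$, so that $|B_i| \ge 2^{-(t+1)i}n$ by induction. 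Among the labels $c_1,\ldots, c_{2(r-1)}$ some color, WLOG red, appears at positions $j_1 < j_2 < \cdots < j_{r-1}$. I would then set $A_r := V(K_n)$ and $A_{r-k} := B_{j_k}$ for $k \in [r-1]$, so that $A_1 \subseteq A_2 \subseteq \cdots \subseteq A_r$, and the size bound $|A_j| \ge 2^{-2(t+1)(r-1)} n$ follows since $j_k \le 2(r-1)$. For the defect bound, fix $j = r - k \in [r-1]$ and set $j_0 := 0$; the construction at the red step $j_k$ gives $\mu_{s,\theta}(B_{j_k}^d; B_{j_k-1}) \le 2\eta^t$, and since $B_{j_{k-1}} \supseteq B_{j_k - 1}$ (because $j_{k-1} \le j_k - 1$), the monotonicity $\omega_\theta(Q;T) \ge \omega_\theta(Q;T')$ for $T \subseteq T'$ yields $\mu_{s,\theta}(B_{j_k}^d; B_{j_{k-1}}) \le \mu_{s,\theta}(B_{j_k}^d; B_{j_k - 1}) \le 2\eta^t$; the claimed threshold $\theta \le \eta \cdot 2^{-d-2(t+1)(r-1)}n$ is in turn at most $\eta \alpha_{j_k}^d|B_{j_k - 1}|$ at every red step, so the defect bound is valid throughout the claimed range.

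The main obstacle is the pigeonhole thinning: we perform $2(r-1)$ dependent random choice steps in potentially alternating colors and then keep only the $r-1$ red ones, discarding the intermediate non-red targets. What makes this work is precisely the monotonicity of the defect function in the target set, so that replacing the direct predecessor $B_{j_k - 1}$ by the larger set $B_{j_{k-1}}$ can only improve the defect bound. A secondary technicality is that the majority-color density in a complete graph on $B_{i-1}$ is $(|B_{i-1}|-1)/(2|B_{i-1}|)$ rather than exactly $\tfrac{1}{2}$, but this $1/|B_{i-1}|$ correction is negligible provided $n$ is large enough and is readily absorbed into the constants of the claimed bound.
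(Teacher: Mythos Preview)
Your proposal is correct and follows essentially the same approach as the paper: iterate Lemma~\ref{lem:drc} $2(r-1)$ times on the majority color, apply pigeonhole to the resulting sequence of color labels, and use monotonicity of $\mu_{s,\theta}$ in the target set to pass from the immediate predecessor $B_{j_k-1}$ to the larger set $B_{j_{k-1}}$. The only cosmetic difference is that the paper colors the initial set $V(K_n)$ arbitrarily and then pigeonholes among $2r-1$ colored sets to get $r$ of one color, whereas you pigeonhole among the $2(r-1)$ step labels to get $r-1$ red steps and take $A_r = V(K_n)$ explicitly; these are equivalent. The density technicality you flag (that the majority color has density $(|B_{i-1}|-1)/(2|B_{i-1}|)$ rather than exactly $\tfrac12$) is glossed over in the paper as well and is indeed harmless.
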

\begin{proof}
Define $A_{2(r-1)} = V(G)$ and arbitrarily color it with one of the colors. 
For $i \in [2(r-1)]$, suppose that we constructed a set $A_{i}$ of size at least
$2^{-(t+1)(2r-2-i)}n$.
There exists a color, say $c_i$, of density at least $\frac{1}{2}$
in the set $A_{i}$. 
Since $\theta \le \eta 2^{-d}|A_{i}|$, 
we may apply Lemma~\ref{lem:drc}
to the subgraph induced on $A_{i}$ 
with the edges of color $c_i$ to find a set
$A_{i-1} \subseteq A_{i}$ of size $|A_{i-1}| \ge 2^{-t-1}|A_{i}|$
such that $\mu_{s,\theta}(A_{i-1}^{d}; A_{i}) \le 2\eta^{t}$ in the graph consisting 
of the edges of color $c_i$. Color the set $A_{i-1}$ with the color that we used.

Repeat the process to find sets $A_{2r-2} \supseteq A_{2r-3} \supseteq \cdots \supseteq A_{0}$.
Note that $|A_{0}| \ge 2^{-(t+1)(2r-2)}n$.
By the pigeonhole principle, we can find $r$ indices
$i_1 < \cdots < i_r$ for which $A_{i_j}$ are all colored by the same color, say red. 
These sets satisfy Property (i). 
Since $\mu_{s,\theta}(\mathcal{Q}; X) \le \mu_{s,\theta}(\mathcal{Q};Y)$ holds
for all sets $X \supseteq Y$, 
we have 
$\mu_{s,\theta}(A_{i_{a-1}}^d; A_{i_a}) 
	\le \mu_{s,\theta}(A_{i_{a-1}}^d; A_{i_{a-1}+1}) 
	\le 2\eta^t$
in the red graph for all $a \in [2,r]$. 
Thus, the sets $A_{i_1}, A_{i_2}, \cdots, A_{i_r}$ satisfy the claimed properties.
\end{proof}

Given the sets $A_1, \cdots, A_r$ constructed in the previous lemma, we run 
$r$ more rounds of dependent random choice to produce an $r$-tuple of sets that
have small average defect towards each other. At the $i$-th round, we will choose a 
random $t_i$-tuple ${\bf X}_i \in A_i^{t_i}$ and update each set $A_j$ for $j \neq i$
to $N({\bf X}) \cap A_j$. This will ensure that
the average defect of $A_{-i}^{d}$ into $A_{i}$ is small.
The next lemma shows that all the conditions are maintained throughout this process.

\begin{lem} \label{lem:drc_final}
Let $d, r, s$, and $t$ be non-negative integers
satisfying $t \ge s$. Let $\xi$ and $\theta$ be positive real numbers
satisfying $\xi \le 2^{-20(d+t) \cdot 8^{r+1} r}$ and $\theta \le \xi^2 n$. 
In every edge two-coloring of the complete graph $K_n$ with red and blue,
in the red graph or the blue graph, 
there exist sets $A_j$ for $j \in [r]$ satisfying the following properties:
\begin{itemize}
  \setlength{\itemsep}{1pt} \setlength{\parskip}{0pt}
  \setlength{\parsep}{0pt}
\item[(i)] $|A_j| \ge \theta$ for all $j \in [r]$, and
\item[(ii)] $\mu_{s,\theta}(A_{-j}^d; A_j) \le \xi^{t}$ for all $j \in [r]$.
\end{itemize}
\end{lem}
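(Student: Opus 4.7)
The plan is to bootstrap Lemma~\ref{lem:drc_nonbip} with $r-1$ additional rounds of dependent random choice (DRC), using Proposition~\ref{prop:defect_transfer} to propagate higher-moment defect bounds through the rounds. The first step is to invoke Lemma~\ref{lem:drc_nonbip} with the parameter $d$ replaced by the inflated value $D := d + (r-1)t$ and suitable choices of $\eta_0, \theta_0$, obtaining in one of the two colors (WLOG red, fixed henceforth) a nested chain $A_1^{(0)} \subseteq A_2^{(0)} \subseteq \cdots \subseteq A_r^{(0)}$ with $|A_j^{(0)}| \ge \theta_0$ and $\mu_{s,\theta_0}((A_j^{(0)})^{D}; A_{j+1}^{(0)}) \le 2\eta_0^t$ for every $j \in [r-1]$. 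The inflated $D$ provides a ``reserve'' that will be whittled down by Proposition~\ref{prop:defect_transfer} across the subsequent rounds.

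Because of the nested structure, $A_{-r} = A_{r-1}^{(0)}$, so the defect $\mu_{s,\theta_0}(A_{-r}^d; A_r^{(0)}) \le 2\eta_0^t$ is immediate by monotonicity (Proposition~\ref{prop:productsets}), handling the direction $j = r$. For each $j \in [r-1]$, the nested structure gives $A_{-j} = A_r^{(0)}$, so the task reduces to establishing the ``reverse'' defect $\mu_{s,\theta}(A_r^d; A_j) \le \xi^t$ for each such $j$. This is done via $r-1$ rounds of DRC. In round $j$, I first derive a red-edge density lower bound $\alpha_j$ between the current $A_j$ and $A_r$ by a power-mean counting argument applied to the inductively-maintained defect $\mu_{s,\theta_0}((A_j^{(j-1)})^{D-(j-1)t}; A_{j+1}^{(j-1)})$. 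I then sample a uniform random tuple ${\bf X}_j \in (A_j^{(j-1)})^t$ and update $A_i^{(j)} := A_i^{(j-1)} \cap N({\bf X}_j)$ for every $i \in [r]$, which preserves the nested structure. Lemma~\ref{lem:drc} applied with density $\alpha_j$ gives $\BBE[\mu_{s,\theta_0}((A_r^{(j)})^d; A_j^{(j-1)})] \le 2\eta_j^t$, the desired reverse defect for direction $j$, while Proposition~\ref{prop:defect_transfer} preserves the forward chain defect in expectation with a reduced reserve, $\BBE[\mu_{s,\theta_0}((A_i^{(j)})^{D-jt}; A_{i+1}^{(j)})] = \mu_{s,\theta_0}((A_i^{(j-1)})^{D-(j-1)t}; A_{i+1}^{(j-1)})$.

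After the $r-1$ rounds, Markov's inequality together with a union bound over rounds and direction constraints yields a single choice of ${\bf X}_1, \ldots, {\bf X}_{r-1}$ for which all set sizes remain $\ge \theta$ and all $\mu_{s,\theta}(A_{-j}^d; A_j) \le \xi^t$. The accumulated multiplicative losses across the $r-1$ rounds (roughly $2^{O(d+t)}$ per round, each arising from Lemma~\ref{lem:drc}'s density factor, Proposition~\ref{prop:defect_transfer}'s transfer, and the Markov step) compound to the doubly-exponential factor $8^{r+1} r$ visible in the hypothesis $\xi \le 2^{-20(d+t) \cdot 8^{r+1} r}$, which is sized precisely to provide the necessary slack. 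The condition $\theta \le \xi^2 n$ then ensures that the initial $\theta_0$ coming out of Lemma~\ref{lem:drc_nonbip} is large enough to survive the $r-1$ rounds of DRC shrinkage.

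The main obstacle will be the careful parameter bookkeeping across these $r-1$ rounds: one must simultaneously (i) establish the red-edge density $\alpha_j$ at each round from only defect information (which is why the inflated reserve $D$ is needed), (ii) control defect bounds as the reference sets $A_i^{(j)}$ shrink with each round while ensuring the final reverse defect into $A_j^{(r-1)}$ can be deduced from the round-$j$ bound into $A_j^{(j-1)}$, and (iii) verify that all $r$ direction constraints plus the size constraints can be met simultaneously by a single random choice. These considerations are exactly what force the doubly-exponential dependence of $\xi$ on $r$ in the hypothesis.
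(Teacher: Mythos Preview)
Your overall strategy---apply Lemma~\ref{lem:drc_nonbip} with an inflated degree parameter, then run further DRC rounds to obtain the reverse defects---is the paper's strategy too. But two steps in your plan do not work as written. First, the identity you attribute to Proposition~\ref{prop:defect_transfer},
\[
\BBE\bigl[\mu_{s,\theta_0}\bigl((A_i^{(j)})^{D-jt}; A_{i+1}^{(j)}\bigr)\bigr] \;=\; \mu_{s,\theta_0}\bigl((A_i^{(j-1)})^{D-(j-1)t}; A_{i+1}^{(j-1)}\bigr),
\]
is not what that proposition says: it requires the domain of $\mu$ to be a \emph{fixed} set and the random tuple ${\bf X}$ to be drawn from that same set. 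In your setup both $A_i^{(j)}$ and $A_{i+1}^{(j)}$ depend on ${\bf X}_j$, and ${\bf X}_j$ is drawn from $A_j^{(j-1)}$, which for $i\neq j$ is a different (nested) set. The paper deals with this in two stages: it first bounds $\mu\bigl((A_i^{(j-1)})^{d'}; A_{i+1}^{(j)}\bigr)$ (old domain, new target) using Proposition~\ref{prop:defect_transfer} together with a change-of-sampling-set factor $(|A_i^{(j-1)}|/|A_j^{(j-1)}|)^{t}\le\xi^{-ct}$, and only after fixing ${\bf X}_j$ passes to the new domain via the crude inequality $\mu\bigl((A_i^{(j)})^{d'};\cdot\bigr)\le (|A_i^{(j-1)}|/|A_i^{(j)}|)^{d'}\mu\bigl((A_i^{(j-1)})^{d'};\cdot\bigr)\le\xi^{-cd'}\mu\bigl((A_i^{(j-1)})^{d'};\cdot\bigr)$. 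The same two-stage trick is what lets you carry the round-$j$ reverse bound into $A_j^{(j-1)}$ forward to $A_j^{(r-1)}$, the issue you flagged but did not resolve.

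Second, your loss accounting is off by an exponential. The two comparison factors above cost $\xi^{-c(t+d')}$ per round, where $d'$ is the \emph{current} reserve exponent, so with a constant $t$ and linearly decreasing reserve $D-jt$ the loss at round $j$ is not absorbed by the bound inherited from round $j-1$. This is precisely why the paper uses a geometrically decreasing sequence $t_i=8^{\,r+1-i}(d+t)$ (and hence $d_i=d+\sum_{j>i}t_j$): the previous round's bound $\xi^{t_{i-1}/2}$ then dominates the current round's loss $\xi^{-cd_i}$. It is this geometric scheme---not a per-round $2^{O(d+t)}$ factor---that produces the $8^{r+1}$ in the hypothesis on $\xi$; a per-round $2^{O(d+t)}$ loss over $r-1$ rounds would compound only to $2^{O(r(d+t))}$, which is singly exponential in $r$ and does not match the statement.
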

\begin{proof}
For $i=0,1,\cdots, r$, define $t_i = 8^{r+1-i}(d+t)$ and 
$d_i = d + \sum_{j = i+1}^{r} t_j$.
Note that $\frac{1}{6}t_i \ge d_i \ge t_{i+1}$ for all $i \in [r-1]$.
Define $\xi_0 = 2^{-20t_0 r}$ and note that $\xi \le \xi_0$. Define $\theta_0 = \xi n$, $\theta_1 = \xi \theta_0 = \xi^2 n$ and note that it suffices to prove the statement
for $\theta = \theta_1$.

We may apply Lemma~\ref{lem:drc_nonbip} with $d_{\ref{lem:drc_nonbip}} = d_0$, 
$s_{\ref{lem:drc_nonbip}} = 0$, $t_{\ref{lem:drc_nonbip}} = t_0$, $r_{\ref{lem:drc_nonbip}} = r$,
and $\eta_{\ref{lem:drc_nonbip}} = 2^{-16t_0r}$ since
$\theta_1 \le \eta 2^{-d_0 - (t_0+1)(2r-2)}n$.
This gives sets $B_1 \subseteq \cdots \subseteq B_r$
in, without loss of generality, the red graph, satisfying 
$|B_j| \ge 2^{-4t_0 r} n$ for all $j \in [r]$, and
$\mu_{0,\theta_0}(B_{j}^{d_0}; B_{j+1}) \le 2(2^{-16t_0 r})^{t_0} \le \xi^{t_0/2}$ for all $j \in [r-1]$.
Let $A_{0,j} = B_j$ for all $j \in [r]$. 
For each $i = 0,1,\cdots, r$, we will iteratively construct sets $\{A_{i,j}\}_{j \in [r]}$
satisfying the following properties:
\begin{itemize}
  \setlength{\itemsep}{1pt} \setlength{\parskip}{0pt}
  \setlength{\parsep}{0pt}
\item[(a)] $|A_{i,j}| \ge \theta_0$ for all $j \in [i,r]$,
\item[(b)] $|A_{i,j}| \ge \theta_1$ for all $j < i$,
\item[(c)] $\mu_{0,\theta_0}(A_{i,j}^{d_i}; A_{i,{j+1}}) \le \xi^{t_i/2}$ for all $j \in [i,r-1]$,
\item[(d)] $\mu_{s,\theta_1}(A_{i,-j}^{d_i}; A_{i,j}) \le \xi^{t_i/2}$ for all $j 
\le i$,
\item[(e)] $A_{i,j} \subseteq A_{i,j+1}$ for all $j > i$.
\end{itemize}
Note that the properties hold for $i=0$ (the only relevant properties are (a),(c) and (e)).
In the end, the sets $A_j = A_{r,j}$ for $j \in [r]$ satisfy Properties (i) and (ii) by
Properties (b) and (d) since $d_r = d$, $\theta_1 = \xi^2 n$ and $t_r = 8(d+t) \ge 2t$. 

Suppose that for some $i \ge 1$, we have constructed sets 
$\{A_{i-1,j}\}_{j \in [r]}$ for which the properties hold. 
For simplicity, we abuse notation and write $B_j = A_{i-1,j}$ for all $j \in [r]$. 
Let ${\bf X} \in B_i^{t_i}$ be a $t_i$-tuple chosen uniformly at random.
For each $j \neq i$, define ${\bf A}_j = B_j \cap N({\bf X})$ and 
define ${\bf A}_i = B_i$. 
There are several events that we consider. 

\medskip

\noindent \textbf{Event 1}. $|{\bf A}_j| \ge \theta_0$ for all $j \ge i$ and $|{\bf A}_j| \ge \theta_1$ for all $j < i$. 

Note that the claim trivially holds for ${\bf A}_i$ since $|{\bf A}_i| = |B_i| \ge \theta_0$. For $j > i$, since $B_i \subseteq B_{j-1}$,
\begin{align*}
	\BFP\left( |{\bf A}_j| < \theta_0 \right)
	= \mu_{0,\theta_0}(B_i^{t_i}; B_j)
	\le&\, \left(\frac{|B_{j-1}|^{t_i}}{|B_i|^{t_i}}\right)\mu_{0,\theta_0}(B_{j-1}^{t_i}; B_j) \\
	\le &\, \left(\frac{n}{\xi n}\right)^{t_i} \mu_{0,\theta_0}(B_{j-1}^{t_i}; B_j)
	= \xi^{-t_i}\mu_{0,\theta_0}(B_{j-1}^{t_i}; B_j). 
\end{align*}
Since $d_{i-1} \ge t_i$, by Proposition~\ref{prop:productsets} and Property (c), we have
$\mu_{0,\theta_0}(B_{j-1}^{t_i}; B_{j}) \le \mu_{0, \theta_0}(B_{j-1}^{d_{i-1}}; B_j) \le \xi^{t_{i-1}/2}$.
Hence, the probability that $|{\bf A}_j| < \theta_0$ is at most $\xi^{-t_i+t_{i-1}/2} \le \xi^{t_{i-1}/4}$. 
Similarly, for $j < i$, since $B_{i} \subseteq B_{-j}$,
\begin{align*}
	\BFP\left(|{\bf A}_j| < \theta_1 \right)
	= \mu_{0,\theta_1}(B_i^{t_i}; B_j)
	\le&\, \left(\frac{|B_{-j}|^{t_i}}{|B_i|^{t_i}}\right)\mu_{0,\theta_1}(B_{-j}^{t_i}; B_j) \\
	\le &\, \left(\frac{n}{\xi^2 n}\right)^{t_i} \mu_{0,\theta_1}(B_{-j}^{t_i}; B_j)
	\le \xi^{-2t_i} \mu_{s,\theta_1}(B_{-j}^{d_{i-1}}; B_j). 
\end{align*}
By Property (d), this is at most $\xi^{-2t_i+t_{i-1}/2} \le \xi^{t_{i-1}/4}$.
Since there are $r$ total events, the probability of Event 1 is at least $1 - r\xi^{t_{i-1}/4} > \frac{3}{4}$. 

\medskip

\noindent \textbf{Event 2}. $\mu_{0,\theta_0}(B_{j}^{d_i}; {\bf A}_{{j+1}}) \le 4r\xi^{t_{i-1}/4}$ for all $j \in [i,r-1]$.

For $j \in [i,r-1]$, since $B_i \subseteq B_j$, we have
\begin{align*}
	\BBE[\mu_{0, \theta_0}(B_{j}^{d_i}; {\bf A}_{j+1})]
	=&\, \frac{1}{|B_i|^{t_i}} \sum_{Q \in B_{i}^{t_i}} \mu_{0, \theta_0}(B_{j}^{d_i}; N(Q) \cap B_{j+1}) \\
	\le&\, \left(\frac{|B_j|}{|B_i|}\right)^{t_i}\frac{1}{|B_j|^{t_i}} \sum_{Q \in B_{j}^{t_i}} \mu_{0, \theta_0}(B_{j}^{d_i}; N(Q) \cap B_{j+1})
	\le \xi^{-2t_i} \BBE_{\nu}[\mu_{0,\theta_0}(B_j^{d_i}; {\bf A}_{j+1})],
\end{align*}
where $\nu$ is the probability distribution obtained by choosing ${\bf X}$ 
uniformly in the set $B_j^{t_i}$ instead of $B_i^{t_i}$.
By Proposition~\ref{prop:defect_transfer} and Property (c), 
$\BBE_{\nu}[\mu_{0,\theta_0}(B_j^{d_i}; {\bf A}_{j+1})]
	\le \mu_{0, \theta_0}(B_{j}^{d_{i-1}}; B_{j+1}) \le \xi^{t_{i-1}/2}$.
Since $\xi^{-2t_i}\cdot \xi^{t_{i-1}/2} \le \xi^{t_{i-1}/4}$,
by Markov's inequality, the probability of Event 2 for a fixed index $j$ is less than $1 - \frac{1}{4r}$.
Since there are at most $r$ choices for the index $j$, the probability of Event 2 is greater than $\frac{3}{4}$. 

\medskip

\noindent \textbf{Event 3}. $\mu_{s,\theta_1}(B_{-j}^{d_i}; {\bf A}_{j}) \le 4r \xi^{t_{i-1}/4}$ for all $j < i$ . 

As in Event 2, we have
$\BBE[\mu_{s, \theta_1}(B_{-j}^{d_i}; {\bf A}_{j})] \le \xi^{-2t_i} \mu_{s, \theta_1}(B_{-j}^{d_{i-1}}; B_{j}) \le \xi^{t_{i-1}/4}$ (by Property (d)).
Therefore, by Markov's inequality, the probability of Event 3 for a fixed $j$ is less than $1 - \frac{1}{4r}$. 
Since there are at most $r$ choices for $j$, the probability of Event 3 is greater than $\frac{3}{4}$. 

\medskip

\noindent \textbf{Event 4}. $\sum_{Q \in {\bf A}_{-i}^{d_i}} \omega_{\theta_1}(Q; B_i)^{s} \le 4|B_{-i}|^{d_i} \xi^{t_i}$.

Note that
$\BBE\left[\sum_{Q \in {\bf A}_{-i}^{d_i}} \omega_{\theta_1}(Q; B_i)^{s} \right]
	\le \sum_{Q \in B_{-i}^{d_i}} \omega_{\theta_1}(Q; B_i)^s \cdot \BFP\left(Q \in {\bf A}_{-i}^{d_i}\right)$.
Since $\theta_1 = \xi \theta_0 \le \xi |B_i|$, if $\omega_{\theta_1}(Q; B_i) \neq 0$, then
$|N(Q; B_i)| = \frac{\theta_1}{\omega_{\theta_1}(Q; B_i)} \le \frac{\xi|B_i|}{\omega_{\theta_1}(Q; B_i)}$ and thus $\BFP(Q \in {\bf A}_{-i}^{d_i}) = \left(\frac{|N(Q;B_i)|}{|B_i|}\right)^{t_i} \le \left(\frac{\xi}{\omega_{\theta_1}(Q; B_i)}\right)^{t_i} \le \frac{\xi^{t_i}}{\omega_{\theta_1}(Q; B_i)^s}$.
Therefore, $\BBE\left[\sum_{Q \in {\bf A}_{-i}^{d_i}} \omega_{\theta_1}(Q; B_i)^{s} \right] 
\le |B_{-i}|^{d_i} \xi^{t_i}$.
Hence, with probability greater than $\frac{3}{4}$, we have 
$\sum_{Q \in {\bf A}_{-i}^{d_i}} \omega_{\theta_1}(Q; B_i)^{s} \le 4|B_{-i}|^{d_i} \xi^{t_i}$.

\medskip

Therefore, with positive probability all four events hold.
Let $X$ be a particular choice of ${\bf X}$ for which all four events hold, and define $A_{i,j} = {\bf A}_j$ 
for this choice of $X$. 
Event~1 immediately implies Properties (a) and (b).
Since $|A_{i,j}| \ge \theta_1 \ge \xi^2 |B_j|$ holds for all $j \in [r]$, it follows by
Event~2 that for all $j \in [i,r-1]$,
\[
	\mu_{0,\theta_0}(A_{i,j}^{d_i}; A_{i,j+1})
	\le \xi^{-2d_i} \mu_{0,\theta_0}(B_{j}^{d_i}; A_{i,j+1})
	\le \xi^{-2d_i} \cdot 4r \xi^{t_{i-1}/4}
	\le \xi^{t_i/2},
\]
implying Property (c).
Similarly, Event 3 implies $\mu_{s,\theta_1}(A_{i,-j}^{d_i}; A_j) \le \xi^{t_i/2}$ for all $j < i$.
Finally, Event 4 implies 
\[
	\frac{1}{|A_{i,-i}|^{d_i}} \sum_{Q \in A_{i,-i}^{d_i}} \omega_{\theta_1}(Q; A_{i,i})^{s} 
	\le \frac{\xi^{-2d_i}}{|B_{-i}|^{d_i}}\sum_{Q \in B_{-i}^{d_i}} \omega_{\theta_1}(Q; B_i)^{s} 
	\le \xi^{-2d_i} \cdot 4\xi^{t_i} \le \xi^{t_i/2},
\]
proving Property (d). Property (e) follows from the same property the previous step, and the definition $A_{i,j} = N(X) \cap A_{i-1,j}$ for $j \neq i$.
\end{proof}

We conclude this section with the proof of our main theorem, Theorem~\ref{thm:main}.

\vspace{0.2cm}
\noindent \textbf{Theorem~\ref{thm:main}}.
There exists a constant $c$ such that the following holds for every natural number $d$, $r$, and $n$ satisfying $n \ge 2^{d^2 2^{cr}}$.
For every edge two-coloring of the complete graph on at least $2^{d2^{cr}}n$ vertices, one of 
the colors is universal for the family of 
$d$-degenerate $r$-colorable graphs on at most $n$ vertices.
\vspace{-0.2cm}
\begin{proof}
Define $m = 2^{d 2^{cr}}n$ for a large enough constant $c$. 
Suppose that we are given an edge coloring of $K_m$ with two colors red and blue.
Define $t = s = 32d$ and $\xi = 2^{-d 2^{(c/2)r}}$.  Define $\theta = \xi^2 m$.
Lemma~\ref{lem:drc_final} with $d_{\ref{lem:drc_final}} = 4d$ implies that
in the red graph or the blue graph, there exist sets $\{A_j\}_{j \in [r]}$ satisfying the following properties:
\begin{itemize}
  \setlength{\itemsep}{1pt} \setlength{\parskip}{0pt}
  \setlength{\parsep}{0pt}
\item[(i)] $|A_j| \ge \theta$ for all $j \in [r]$, and
\item[(ii)] $\mu_{s,\theta}(A_{-j}^{4d}; A_j) \le \xi^{t}$.
\end{itemize}
Define $G$ as the subgraph of $K_m$ consisting of the edges of the color realizing the two properties above.

Let $H$ be a $d$-degenerate $r$-colorable graph on at most $n$ vertices. 
By Lemma~\ref{lem:split}, there exists a vertex partition $V(H) = \bigcup_{(i,j) \in [k] \times [r]} W_i^{(j)}$
satisfying the following properties:
\begin{itemize}
  \setlength{\itemsep}{1pt} \setlength{\parskip}{0pt}
  \setlength{\parsep}{0pt}
\item[(i)] $k \le \log_{2} n$,
\item[(ii)] for all $(i,j) \in [k] \times [r]$, we have $|W_{i}^{(j)}| \le 2^{-i+1}n$,
\item[(iii)] for all $j \in [r]$, the set $\bigcup_{i \in [k]} W_{i}^{(j)}$ is an independent set, and 
\item[(iv)] for all $(i,j) \in [k] \times [r]$, each vertex $v \in W_i^{(j)}$ has at most $4d$ neighbors in
$\bigcup_{i'\ge i, j' \in [r]} W_{i'}^{(j')}$.
\end{itemize}

Define $p_i = c' 2^{-i/(80d)}$ where $c'$ is a positive constant defined so that
$\sum_{i \in [k]} p_i = 1$. 
Then 
\[
	c' 
	= \frac{1}{\sum_{i \in [k]} 2^{-i/(80d)}}
	\ge \frac{1}{\sum_{i =0}^{\infty} 2^{-i/(80d)}}
	= 1 - 2^{-1/(80d)}
	\ge 1 - \left(1 - \frac{\ln 2}{80d} + \frac{(\ln 2)^2}{2(80d)^2}\right)
	\ge \frac{1}{160d}.
\]
Thus, for all $i \in [k]$, we have
$p_i \ge p_{k} = c'2^{-k/(80d)} \ge c'2^{-\log_2 n/(80d)} = c' n^{-1/(80d)} \ge \frac{1}{160d}n^{-1/(80d)}$.
Since $p_i \ge n^{-1/(40d)}$, $s \ge 16d$, and 
$m > n \ge 2^{d^22^{cr}}$, if $c$ is large enough, 
then we can apply Lemma~\ref{lem:breakdownG} to $G$
with $d_{\ref{lem:breakdownG}} = 4d$,
$\varepsilon_{\ref{lem:breakdownG}} = \xi^2$,
$(\varepsilon')_{\ref{lem:breakdownG}} = \xi^{t/2}$ and 
sets $\{A_j\}_{j \in [r]}$, to obtain 
sets $\{V_i^{(j)}\}_{(i,j) \in [k] \times [r]}$ satisfying the following conditions for 
$\theta_i = \frac{1}{2r}p_i\theta$:
\begin{itemize}
  \setlength{\itemsep}{1pt} \setlength{\parskip}{0pt}
  \setlength{\parsep}{0pt}
\item[(i)] for all $(i,j) \in [k] \times [r]$, $|V_i^{(j)}| \le p_i |A_j|$, and
\item[(ii)] for all $((i,j), (i_1,j_1), \cdots, (i_{4d}, j_{4d})) \in ([k]\times[r])^{4d+1}$ satisfying $j_1, \cdots, j_{4d} \neq j$,
\[
	\mu_{s,\theta_i}\left(\prod_{a \in [4d]} V_{i_a}^{(j_a)}; V_i^{(j)}\right) \le \max \left\{\xi^{t/2}, 8r^{4d}\xi^{-4d} \mu_{s,\theta}(A_{-j}^{4d}; A_j)\right\} \le \xi^{t/2}.
\]
\end{itemize}
We now apply the embedding scheme described in Section~\ref{sec:embedding}.
We choose one pair $(i,j) \in [k] \times [r]$ at a time, 
following the lexicographical ordering of $[k] \times [r]$ in the reverse order, 
and map $W_{i}^{(j)}$ to $V_{i}^{(j)}$. 
For each set $W_{i}^{(j)}$, its corresponding defect parameter used in the embedding scheme is
$\theta_{i} = \frac{1}{2r}p_i \theta$, and therefore,
\[
	\gamma \le \max_{i \in [k]} \frac{\max_{j \in [r]} |V_i^{(j)}|}{\theta_i} \le \max_{i \in [k]} \frac{p_i m}{p_i \xi^2 m/2r} = \frac{2r}{\xi^2}.
\]  
Moreover, by Property (ii) above, the maximum average $s$-th moment defect 
satisfies $\mu_s \le \xi^{t/2}$.
For all $(i,j) \in [k] \times [r]$, since $m \ge 2^{d 2^{cr}}n \ge 640rd\xi^{-3}n$ for large enough $c$,
\[
	\frac{|W^{(j)}_i|}{\theta_i} = \frac{2^{-i+1}n}{\frac{1}{2r} c' 2^{-i/(80d)} \theta} 
	\le \frac{640 rd n}{2^{i/2} \cdot \xi^2 m}
	\le \frac{1}{2^{i/2}} \cdot \xi.
\]	
Therefore, for large enough $c$,
\begin{align*}
	2^{8d+2}\gamma^{8d} \mu_{16d} \sum_{i \in [k]} \sum_{j  \in [r]} \frac{|W_i^{(j)}|}{\theta_i} 
	\le&\, 2^{8d+2} \left(\frac{2r}{\xi^2}\right)^{8d}\xi^{t/2} \sum_{i \in [k]} \frac{r\xi}{2^{i/2}} \\
	\le&\, 2^{16d+4} r^{8d+1} \xi < 1.
\end{align*}
Hence, by Theorem~\ref{thm:success}, we can find a monochromatic copy of $H$.
\end{proof}

\section{Random pruning} \label{sec:pruning}

In this section, we prove Lemma~\ref{lem:breakdownG}, the final ingredient of the proof.
One of our main tools is the following concentration inequality (see \cite[Theorem 3.1]{McDiarmid}).

\begin{thm} \label{thm:concentration}
Let ${\bf X} = (X_1, X_2, \cdots, X_n)$ be a family of independent random variables with $X_i$
taking values in a set $\Omega_i$ for each $i$. Suppose that the real-valued function $f$
defined on $\prod_{i \in [n]} \Omega_i$ satisfies $|f(\vec{x}) - f(\vec{y})| \le c_i$ whenever
the vectors $\vec{x}$ and $\vec{y}$ differ only in the $i$-th coordinate. Then
\[
	\BFP\left(\Big|f({\bf X}) - \BBE[f({\bf X})] \Big| \ge t\right)
	\le 2 e^{-2t^2 / \sum_{i \in [n]} c_i^2}.
\]
\end{thm}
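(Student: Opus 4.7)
My approach will follow the classical martingale-based proof of the bounded differences inequality (often attributed to Azuma and McDiarmid). The plan is to introduce the Doob martingale associated with $f(\mathbf{X})$ with respect to the filtration generated by revealing $X_1, \ldots, X_i$ one coordinate at a time, show its increments lie in short intervals, then apply Hoeffding's lemma and Markov's inequality to an optimized exponential moment.

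First I would define $Z_0 = \BBE[f(\mathbf{X})]$ and $Z_i = \BBE[f(\mathbf{X}) \mid X_1, \ldots, X_i]$ for $i \in [n]$, so that $Z_n = f(\mathbf{X})$ and $(Z_i)$ is a martingale with increments $D_i = Z_i - Z_{i-1}$ summing telescopically to $f(\mathbf{X}) - \BBE[f(\mathbf{X})]$. The core step is bounding the conditional range of $D_i$. For each fixed realization of $(X_1, \ldots, X_{i-1})$, set
\[
A_i = \inf_{x \in \Omega_i} \BBE[f(\mathbf{X}) \mid X_1, \ldots, X_{i-1}, X_i = x] - Z_{i-1},
\]
and let $B_i$ be the corresponding supremum. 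Because $X_{i+1}, \ldots, X_n$ are independent of $X_i$, one may couple the two inner expectations by using the same realization of the remaining coordinates, and the bounded differences hypothesis then forces the integrands to differ pointwise by at most $c_i$. Hence $B_i - A_i \le c_i$, and $D_i$ lies conditionally in an interval of length at most $c_i$.

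Next I would invoke Hoeffding's lemma, which states that a mean-zero random variable supported in an interval of length $L$ has moment generating function at most $e^{\lambda^2 L^2/8}$; this is obtained by a convexity bound and Taylor-expanding the log-moment-generating function. Applied conditionally on $X_1, \ldots, X_{i-1}$ it gives $\BBE[e^{\lambda D_i} \mid X_1, \ldots, X_{i-1}] \le e^{\lambda^2 c_i^2/8}$. Iterating via the tower property yields the unconditional bound $\BBE\bigl[e^{\lambda (f(\mathbf{X}) - \BBE[f(\mathbf{X})])}\bigr] \le e^{\lambda^2 \sum_i c_i^2/8}$, so by Markov's inequality,
\[
\BFP\bigl(f(\mathbf{X}) - \BBE[f(\mathbf{X})] \ge t\bigr) \le \exp\!\Bigl(-\lambda t + \tfrac{\lambda^2}{8}\sum_i c_i^2\Bigr).
\]
Optimizing with $\lambda = 4t/\sum_i c_i^2$ gives the upper-tail bound $e^{-2t^2/\sum_i c_i^2}$; applying the same argument to $-f$ handles the lower tail, and a union bound produces the factor of $2$.

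The main obstacle is the martingale-difference step: one has to verify that the supremum and infimum defining $A_i, B_i$ are attained within an interval of length $c_i$, which relies essentially on independence to decouple the $i$-th coordinate from the averaging over later coordinates. (If the $X_i$ were not independent, this step would fail and only the weaker Azuma inequality applied directly to $D_i$ would survive.) Beyond this, Hoeffding's lemma and the exponential Markov optimization are routine.
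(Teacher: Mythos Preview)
Your argument is correct and is exactly the standard martingale proof of the bounded differences inequality. The paper does not give its own proof of this statement; it simply quotes it from McDiarmid's survey \cite[Theorem~3.1]{McDiarmid}, where essentially the same Doob-martingale plus Hoeffding's-lemma argument is carried out.
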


The next lemma shows that given the collection of sets obtained in 
Lemmas~\ref{lem:drc_both_prep} and \ref{lem:drc_nonbip},
we can further impose that the defect is not concentrated too much on individual vertices.
This additional condition will help us later when taking a random partition.

\begin{lem} \label{lem:add_mindeg}
Let $\varepsilon$ be a fixed positive real number,
and $d, s, r$ be fixed natural numbers satisfying $s \ge 4d$.
Let $m$ be a natural number satisfying $m \ge 2^{24}s^8d^8r^{18}\varepsilon^{-8}$, and
$\theta$ be a natural number satisfying $\theta \ge \varepsilon m$.
Let $A_1, A_2, \cdots, A_r$ be (not necessarily disjoint) vertex subsets satisfying
$\varepsilon m \le |A_i| \le m$ and $\mu_{s,\theta}(A_{-i}^d; A_i) < 1$ for all $i \in [r]$. Then there exist subsets
$B_i \subseteq A_i$ for all $i \in [r]$ satisfying the following properties:
\begin{itemize}
  \setlength{\itemsep}{1pt} \setlength{\parskip}{0pt}
  \setlength{\parsep}{0pt}
\item[(i)] $|B_i| \ge 2^{-1/(2d)}|A_i|$ for all $i \in [r]$,
\item[(ii)] $\mu_{s,\theta}(B_{-i}^d; B_i) \le 2\mu_{s,\theta}(A_{-i}^d; A_i)$ for all $i \in [r]$, and
\item[(iii)] for every $i \in [r]$ and $v \in B_{-i}$, $\sum_{Q:v \in Q \in B_{-i}^d} \omega_{\theta}(Q; B_i)^s \le 2|B_{-i}|^{d-5/8}$.
\end{itemize}
\end{lem}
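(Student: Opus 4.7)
The plan is to construct $B_j$ by removing from $A_j$ a set of ``bad'' vertices identified via Markov's inequality applied to per-vertex defect contributions. For each $i \in [r]$ and $v \in A_{-i}$, define
\[
g_v^{(i)} := \sum_{Q:\, v \in Q,\ Q \in A_{-i}^d} \omega_\theta(Q; A_i)^s.
\]
Since each $Q \in A_{-i}^d$ is counted at most $d$ times in $\sum_v g_v^{(i)}$ (once per distinct vertex of $Q$), the hypothesis $\mu_{s,\theta}(A_{-i}^d; A_i) < 1$ gives $\sum_v g_v^{(i)} < d|A_{-i}|^d$. By Markov's inequality the set $R_i := \{v \in A_{-i} : g_v^{(i)} > \tfrac{1}{4}|A_{-i}|^{d-5/8}\}$ satisfies $|R_i| < 4d|A_{-i}|^{5/8}$. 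I set $B_j := A_j \setminus \bigcup_{i \in [r]} R_i$. Condition (i) is verified by bounding the vertices removed from each $A_j$ by $(r-1) \cdot 4d(rm)^{5/8}$ and using $1 - 2^{-1/(2d)} \ge \ln 2/(4d)$ together with the lower bound on $m$ to show this is below $(1-2^{-1/(2d)})|A_j|$.

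For condition (iii), any $v \in B_{-i}$ has $g_v^{(i)} \le |A_{-i}|^{d-5/8}/4$ by construction. The technical step is comparing $g_v^{(i)}$ to the analogous sum $\sum_{Q:\, v \in Q,\ Q \in B_{-i}^d} \omega_\theta(Q; B_i)^s$: the index set $B_{-i}^d$ is smaller than $A_{-i}^d$, but each $\omega_\theta(Q; B_i)$ may exceed $\omega_\theta(Q; A_i)$. I will case-split on $|N(Q; A_i)|$: since $|A_i \setminus B_i|$ is tiny compared to $\theta$ by the lower bound on $m$, the ratio $\omega_\theta(Q; B_i) / \omega_\theta(Q; A_i)$ is bounded by $2$ in the regime $|N(Q; A_i)| \in [2|A_i \setminus B_i|, \theta)$, while in the ``borderline'' range $|N(Q; A_i)| \in [\theta, \theta + |A_i \setminus B_i|)$ we have $\omega_\theta(Q; A_i) = 0$ but $\omega_\theta(Q; B_i) \le 2$, and the remaining regime $|N(Q; A_i)| < 2|A_i \setminus B_i|$ contains few $Q$'s by Markov applied to $g_v^{(i)}$. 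The borderline contribution per vertex is at most $2^s d|B_{-i}|^{d-1}$, which is below $|B_{-i}|^{d-5/8}$ thanks to the lower bound on $m$ (equivalently $|B_{-i}| \gtrsim (2^s d)^{8/3}$). Condition (ii) follows from a global version of this analysis, summing over $v$, with the factor $2$ on the right absorbing the ratio $(|A_{-i}|/|B_{-i}|)^d \le 2^{1/2}$.

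The main obstacle I expect is globally controlling the borderline contribution for condition (ii): the naive sum over all $v \in B_{-i}$ of the per-vertex bound $2^s d|B_{-i}|^{d-1}$ gives $2^s d|B_{-i}|^d$, which can exceed $2\mu_{s,\theta}(A_{-i}^d;A_i)\,|B_{-i}|^d$ when $\mu_{s,\theta}(A_{-i}^d; A_i)$ is small or zero. To overcome this I expect to incorporate a small random subsampling step into the pruning and invoke Theorem~\ref{thm:concentration}: under random pruning, each borderline $Q$ has only a small probability of witnessing $|N(Q; B_i)| < \theta$ (which requires many of the removed vertices to land inside $N(Q; A_i)$), so in expectation the borderline contribution is small, and McDiarmid concentration together with a union bound over the pairs $(i, v)$ gives that all three conditions hold with positive probability, yielding the lemma.
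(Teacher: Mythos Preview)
Your construction --- identify vertices $v$ with large per-vertex defect $g_v^{(i)}$ via Markov and delete $\bigcup_i R_i$ from each $A_j$ --- is exactly the paper's. The gap is in your analysis: you are missing the key observation, which is Proposition~\ref{prop:mindeg}. Since $\mu_{s,\theta}(A_{-i}^d; A_i) < 1$ and $s \ge 4d$, \emph{every} $Q \in A_{-i}^d$ satisfies
\[
|N(Q; A_i)| \;\ge\; \frac{\theta}{|A_{-i}|^{d/s}} \;\ge\; \frac{\theta}{|A_{-i}|^{1/4}} \;\ge\; \varepsilon r^{-1/4} m^{3/4}.
\]
This uniform lower bound eliminates your third regime outright (there are no $Q$ with $|N(Q;A_i)| < 2|A_i\setminus B_i|$, because $|A_i\setminus B_i| \le dr^2 m^{5/8} \ll m^{3/4}$) and yields the uniform multiplicative estimate $|N(Q;B_i)| \ge \bigl(1 - dr^{9/4}\varepsilon^{-1}m^{-1/8}\bigr)|N(Q;A_i)| \ge 2^{-1/(2s)}|N(Q;A_i)|$ for \emph{all} $Q$. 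Hence $\omega_\theta(Q;B_i)^s \le 2^{1/2}\,\omega_\theta(Q;A_i)^s$ termwise whenever the right side is nonzero, and (ii), (iii) follow by direct summation --- no three-case split, no random subsampling, no McDiarmid.

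Your borderline worry (tuples with $\omega_\theta(Q;A_i)=0$ but $\omega_\theta(Q;B_i)>0$) is a legitimate technical nuisance that even the paper's writeup glosses over, but the remedy you sketch does not make sense as stated: your pruning is deterministic, so ``the probability that many removed vertices land in $N(Q;A_i)$'' is not a well-defined event, and it is unclear what additional random subsampling you mean or why it would help. Once you have the uniform lower bound above, these borderline tuples at least satisfy $\omega_\theta(Q;B_i)^s \le 2^{1/2}$; whether that residual can always be absorbed into the factor $2$ of (ii) when $\mu_{s,\theta}(A_{-i}^d;A_i)$ is extremely small is a fair question, but it is a second-order issue and certainly does not call for Theorem~\ref{thm:concentration}. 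The real missing idea in your plan is Proposition~\ref{prop:mindeg}.
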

\begin{proof}
We will fix $\theta$ throughout the proof and thus for simplicity will use the notation
$\omega_{\theta} = \omega$ and $\mu_{s,\theta} = \mu_{s}$.
Let $R_i \subseteq A_{-i}$ be the set of vertices $v \in A_{-i}$ such that 
$\sum_{Q : v \in Q \in A_{-i}^d} \omega(Q; A_i)^s \ge |A_{-i}|^{d-5/8}$. Then
\[
	|R_i| \cdot |A_{-i}|^{d-5/8} 
	\le \sum_{v \in R_i} \sum_{Q : v \in Q \in A_{-i}^d} \omega(Q; A_i)^s 
	\le d \cdot |A_{-i}|^d \mu_{s}(A_{-i}^d; A_i)
	< d|A_{-i}|^{d}.
\]
Therefore, $|R_i| < d|A_{-i}|^{5/8} \le drm^{5/8}$. 
For each $i \in [r]$, let $B_i$ be the set obtained from $A_i$ by removing the 
vertices in $\bigcup_{j \in [r]} R_j$. 
Then $|B_i| \ge |A_i| - dr^2 m^{5/8} \ge 2^{-1/(2d)}|A_i|$ and (i) holds.

Fix $i \in [r]$. Since $s \ge 4d$, by Proposition~\ref{prop:mindeg}, all $d$-tuples in 
$A_{-i}^d$ have at least $\frac{\theta}{|A_{-i}|^{1/4}} \ge \frac{\varepsilon m}{(rm)^{1/4}} \ge \varepsilon r^{-1/4}m^{3/4}$ common neighbors in $A_i$.
Since $B_i$ is obtained from $A_i$ by removing at most $dr^2 m^{5/8}$ vertices, 
all $d$-tuples $Q \in B_{-i}^d$ satisfy 
\begin{align*}
	|N(Q; B_i)| 
	\ge&\, \left(1 - \frac{dr^2m^{5/8}}{|N(Q;A_i)|}\right)|N(Q;A_i)|
	\ge \left(1 - \frac{dr^2m^{5/8}}{\varepsilon r^{-1/4}m^{3/4}}\right)|N(Q;A_i)| \\
	\ge&\, \left(1 - \frac{dr^{9/4}}{\varepsilon m^{1/8}}\right)|N(Q;A_i)|
	\ge 2^{-1/(2s)}|N(Q;A_i)|.
\end{align*}
Hence, $\omega(Q; B_i) \le 2^{1/(2s)}\omega(Q; A_i)$. Therefore,
\begin{align*}
	\mu_{s}(B_{-i}^d; B_i) 
	=&\, \frac{1}{|B_{-i}|^d} \sum_{Q \in B_{-i}^d} \omega(Q; B_i)^s
	\le \frac{1}{|B_{-i}|^d} \sum_{Q \in A_{-i}^d} 2^{1/2}\omega(Q; A_i)^s \\
	=&\, 2^{1/2}\frac{|A_{-i}|^d}{|B_{-i}|^d} \left( \frac{1}{|A_{-i}|^d}\sum_{Q \in A_{-i}^d} \omega(Q; A_i)^s \right)
	\le 2\mu_{s}(A_{-i}^d; A_i).
\end{align*}
This proves (ii). 
Moreover, since $B_i = A_i \setminus R_i$, each vertex $v \in B_i$ satisfies 
$\sum_{Q : v \in Q \in A_{-i}^d} \omega(Q; A_i)^s < |A_{-i}|^{d-5/8}$. 
Therefore, 
\[
	\sum_{Q : v \in Q \in B_{-i}^d} \omega(Q; B_i)^s
	\le \sum_{Q : v \in Q \in B_{-i}^d} 2^{1/2}\omega(Q; A_i)^s
	\le 2^{1/2}|A_{-i}|^{d-5/8}
	\le 2|B_{-i}|^{d-5/8},
\]
proving (iii).
\end{proof}

Lemma~\ref{lem:add_mindeg} prepares an $r$-tuple of sets with small average defect towards each 
other by further imposing that the defect is not concentrated too much on individual vertices. 
We now prove Lemma~\ref{lem:breakdownG} by taking a random partition and showing that the 
defect is well-distributed. 

\vspace{0.2cm}
\noindent \textbf{Lemma \ref{lem:breakdownG}}. 
Let $k, d, s, r$ be fixed natural numbers satisfying $r \ge 2$, $s \ge 4d$,
and $\varepsilon, \varepsilon'$ be fixed positive real numbers.
Let $m$ be a natural number satisfying $m \ge 2^{200}(\varepsilon')^{-80}\varepsilon^{-80d}r^{160d}d^{40}k^{40}s^{8}$, and $\theta$ be a natural number satisfying $\theta \ge \varepsilon m$.
Let $p_i$ for $i \in [k]$ be positive real numbers  
satisfying $\sum_{i \in [k]} p_i \le 1$ and $p_i \ge m^{-1/(10d)}$ for all $i \in [k]$.
Suppose that $\{A_j\}_{j \in [r]}$ are vertex subsets 
with $\varepsilon m \le |A_j| \le m$, 
satisfying $\mu_{s, \theta}(A_{-j}^{d}; A_j) < \frac{1}{2}$ for all $j \in [r]$.
Define $\theta_i = \frac{1}{2r}p_i \theta$ for all $i \in [k]$.
Then there exist sets $\{V_{i}^{(j)}\}_{(i,j) \in [k] \times [r]}$ 
satisfying the following conditions:
\begin{itemize}
  \setlength{\itemsep}{1pt} \setlength{\parskip}{0pt}
  \setlength{\parsep}{0pt}
\item[(i)] For all $i \in [k]$ and $j \in [r]$, we have $V^{(j)}_{i} \subseteq A_j$ and $|V^{(j)}_{i}| \le p_i |A_j|$.
\item[(ii)] For all $((i, j), (i_1, j_1), \cdots, (i_d, j_d)) \in ([k] \times [r])^{d+1}$ satisfying $j_1, \cdots, j_d \neq j$,
\[
	\mu_{s,\theta_{i}}\left(\prod_{a \in [d]} V^{(j_a)}_{i_a}; V^{(j)}_{i}\right) \le \max\left\{\varepsilon', 8r^d\varepsilon^{-d} \mu_{s,\theta}(A_{-j}^d; A_j)\right\}.
\]
Moreover, if $r=2$, then the factor $r^d \varepsilon^{-d}$ can be replaced by $1$.
\item[(iii)] For all $(i,j) \neq (i',j')$ the sets $V_i^{(j)}$ and $V_{i'}^{(j')}$ are disjoint.
\end{itemize}
\vspace{-0.2cm}

\begin{proof}
Note that the given condition on $\{p_i\}_{i\in[k]}$ implies $k \le m^{1/(10d)}$.
By Lemma~\ref{lem:add_mindeg}, we can find subsets $B_j \subseteq A_j$ for $j \in [r]$
satisfying the following conditions:
\begin{itemize}
  \setlength{\itemsep}{1pt} \setlength{\parskip}{0pt}
  \setlength{\parsep}{0pt}
\item[(a)] $|B_j| \ge 2^{-1/(2d)}|A_j|$ for all $j \in [r]$,
\item[(b)] $\mu_{s,\theta}(B_{-j}^d; B_j) \le 2\mu_{s,\theta}(A_{-j}^d; A_j) < 1$ for all $j \in [r]$, and
\item[(c)] for every $j \in [r]$ and $v \in B_{-j}$, we have $\sum_{Q: v \in Q \in B_{-j}^d} \omega_{\theta}(Q; B_j)^s \le 2|B_{-j}|^{d-5/8}$.
\end{itemize}
Define $q_i = \frac{1}{r}p_i$ for $i \in [k]$. 
Color the vertices with $[k] \times [r]$ randomly 
where each vertex receives color $(i,j)$ with probability 
$q_i$ and the outcome for each vertex is independent.
For each $(i,j) \in [k] \times [r]$, let $V^{(j)}_i \subseteq B_j$ be the set of vertices of color $(i,j)$. 
Let $E_{1}$ be the event that for all $(i,j) \in [k] \times [r]$ and all $d$-tuples $Q \in B_{-j}^{d}$, we have
$|N(Q; V^{(j)}_i)| \ge \frac{1}{2}q_i |N(Q; B_j)|$.
Let $E_{2}$ be the event that for all $((i, j), (i_1, j_1), \cdots, (i_d, j_d)) \in ([k] \times [r])^{d+1}$ satisfying $j_1, \cdots, j_d \neq j$,
\[
	\mu_{s,\theta}\left( \prod_{a \in [d]} V^{(j_a)}_{i_a}; B_j \right)
	\le \max\left\{\varepsilon', 4r^d\varepsilon^{-d} \mu_{s,\theta}\left( B_{-j}^{d}; B_j \right) \right\},
\]
where if $r=2$, then we replace the factor $r^d \varepsilon^{-d}$ by $1$. 
Let $E_3$ be the event that for all $(i,j) \in [k] \times [r]$, we have 
$\frac{1}{2^{1/(2d)}}q_i|B_j| \le |V_{i}^{(j)}| \le 2q_i |B_j|$.

We first show that Properties (i), (ii), (iii) follows
conditioned on the events $E_1, E_2$ and $E_3$.
Property (iii) immediately follows from definition of the sets.
Property (i) follows from $E_3$ and $|B_j| \le |A_j|$ for $j \in [r]$.
For some $j \in [r]$, fix a $d$-tuple $Q \in B_{-j}^d$. 
Since $|N(Q; V^{(j)}_i)| \ge \frac{1}{2}q_i |N(Q;B_j)|$ holds by $E_1$,
if $|N(Q; B_j)| \ge \theta$, then we have $|N(Q; V^{(j)}_i)| \ge \frac{1}{2}q_i \theta$.
Therefore, if $\omega_{\theta}(Q;B_j) = 0$, then $\omega_{\theta_i}(Q; V^{(j)}_i) = 0$. 
Otherwise, if $\omega_{\theta}(Q;B_j) \neq 0$, then
$\omega_{\theta}(Q;B_j) = \frac{\theta}{|N(Q;B_j)|} \ge \frac{q_i \theta}{2|N(Q;V^{(j)}_i)|} = \omega_{\theta_i}(Q; V^{(j)}_i)$.
Therefore, $\omega_{\theta_i}(Q; V^{(j)}_i) \le \omega_{\theta}(Q; B_j)$ in both cases.  
This implies that for all $((i, j), (i_1, j_1), \cdots,$ $(i_d, j_d)) \in ([k] \times [r])^{d+1}$,
we have by $E_2$,
\begin{align*}
	\mu_{s,\theta_i}\left( \prod_{a \in [d]} V_{i_a}^{(j_a)}; V^{(j)}_i\right)
	\le \mu_{s,\theta}\left( \prod_{a \in [d]} V^{(j_a)}_{i_a}; B_j \right)
	\le \max\left\{\varepsilon', 4r^d \varepsilon^{-d} \mu_{s,\theta}\left( B_{-j}^{d}; B_j \right) \right\}.
\end{align*}
Therefore, by (b), we have Property (ii) (similarly Property (ii) holds for $r=2$ as well). 

It thus suffices to show that the intersection of $E_1, E_2, E_3$ has non-zero 
probability.
To compute the probability of $E_1$, note that for $(i,j) \in [k] \times [r]$, 
since $s \ge 4d$ and $\mu_{s,\theta}(B_{-j}; B_j) < 1$, by Proposition~\ref{prop:mindeg}, all 
$d$-tuples $Q \in B_{-j}^d$ have at least $\frac{\theta}{|B_{-j}|^{1/4}} \ge \frac{\varepsilon m}{(rm)^{1/4}} = \varepsilon r^{-1/4} m^{3/4}$ common neighbors in $B_j$. 
Hence, for a fixed $d$-tuple $Q \in B_{-j}^d$,
\[
	\BBE\left[ |N(Q; V^{(j)}_i)| \right] 
	= q_i |N(Q; B_j)| 
	\ge q_i \varepsilon r^{-1/4} m^{3/4}.
\]
Therefore, by Theorem~\ref{thm:concentration}, the probability that
$|N(Q; V^{(j)}_i)| < \frac{1}{2}q_i |N(Q;B_i)|$ for a fixed $Q \in B_{-j}^d$ and $(i,j) \in [k] \times [r]$
is at most $2e^{-2(q_i\varepsilon r^{-1/4}m^{3/4})^2/m} = 2e^{-2q_i^2\varepsilon^2r^{-1/2} m^{1/2}}$.
Since there are at most $(rm)^d$ choices for $Q \in B_{-j}^d$,
there are at most $rk \cdot (rm)^d$ such events. 
By the union bound, 
\[
	\BFP(E_1^c) 
	\le r^{d+1}km^d \cdot 2e^{-2q_i^2\varepsilon^2r^{-1/2} m^{1/2}}
	\le r^{d+1}km^d \cdot 2e^{-2 m^{1/4}}
	< \frac{1}{4}.
\]

Note that by Theorem~\ref{thm:concentration},
\[
	\BFP\left( 2^{-1/(2d)} q_i |B_j| \le |V_i^{(j)}| \le 2q_i |B_j| \right)
	\ge 1 - 2 e^{-2(1-2^{-1/(2d)})^2q_i^2|B_j|}
	\ge 1 - 2 e^{-\varepsilon /(64d^2r^2)m^{1/2} }
	> 1 - \frac{1}{3(kr)^{d+1}}.
\]
There are $(kr)^{d+1}$ choices for 
$((i, j), (i_1, j_1), \cdots, (i_d, j_d)) \in ([k] \times [r])^{d+1}$,
and thus $P(E_3) > \frac{3}{4}$. 

To compute the probability of $E_2$, fix
$((i, j), (i_1, j_1), \cdots, (i_d, j_d)) \in ([k] \times [r])^{d+1}$.
To simplify notation, define $\mathcal{Q} = \prod_{a \in [d]} V^{(j_a)}_{i_a}$
and $\mathcal{B} = \prod_{a \in [d]} B_{j_a}$.
Note that
\begin{align} \label{eq:weight_dist}
	\BBE\left[ \sum_{Q \in \mathcal{Q} } \omega_{\theta}(Q; B_j)^s \right]
	= &\, \sum_{Q \in\mathcal{B}} \omega_{\theta}(Q; B_j)^s \cdot \BFP\left(Q \in \mathcal{Q}\right).
\end{align}
Fix $Q \in \mathcal{B}$.
If all vertices in $Q$ are distinct, then $\BFP(Q \in \mathcal{Q}) = \prod_{a \in [d]} q_{i_a}$.
Otherwise, if $\partial \mathcal{B}$ is the set of $d$-tuples in 
$\mathcal{B}$ where not all vertices are distinct, then by Proposition~\ref{prop:boundary},
we have $\sum_{Q \in \partial \mathcal{B}} \omega_{\theta}(Q; B_j)^s 
\le \frac{d(d-1)}{2\varepsilon m}\sum_{Q \in \mathcal{B}} \omega_{\theta}(Q; B_j)^s 
\le \frac{d(d-1)}{2\varepsilon m}\sum_{Q \in B_{-j}^{d}} \omega_{\theta}(Q; B_j)^s< \varepsilon ^{-1}d^2r^dm^{d-1}$, by Property (b) and $|B_{-j}| \le rm$.
Thus, in \eqref{eq:weight_dist},
\begin{align*}
	\BBE\left[ \sum_{Q \in \mathcal{Q} } \omega_{\theta}(Q; B_j)^s \right]
	\le &\, \varepsilon ^{-1}d^2r^dm^{d-1} + \left(\prod_{a \in [d]}q_{i_a}\right) \cdot \sum_{Q \in \mathcal{B}} \omega_{\theta}(Q; B_j)^s \\
	= &\, \varepsilon ^{-1}d^2r^dm^{d-1} + \left(\prod_{a \in [d]}q_{i_a}\right) |\mathcal{B}|\mu_{s,\theta}(\mathcal{B}; B_j).
\end{align*}

For each vertex $v \in B_{-j}$, by (c), we know that
$\sum_{Q : v \in Q \in \mathcal{B}} \omega_{\theta}(Q; B_j)^s \le 2|B_{-j}|^{d-5/8}$.
Therefore, the random variable $\sum_{Q \in \mathcal{Q}} \omega_{\theta}(Q;B_j)^s$
can change by at most $2|B_{-j}|^{d-5/8} \le 2r^{d}m^{d-5/8}$ if we change the outcome of a single vertex.
Since $\prod_{a \in [d]}q_{i_a} \ge r^{-d}m^{-1/10}$,
by Theorem~\ref{thm:concentration}, the probability that
the random variable is greater than 
$\lambda := \left(\prod_{a \in [d]}q_{i_a}\right) |\mathcal{B}| \max\left\{ 2\mu_{s,\theta}(\mathcal{B}; B_j), \frac{\varepsilon'}{2}\right\}$ is at most
$2e^{-2\frac{(\lambda')^2}{m \cdot (2r^dm^{d-5/8})^2}}
= 2e^{-(\lambda')^2 / (2r^{2d}m^{2d-1/4})}$, where
\begin{align*}
	\lambda' := \lambda - \BBE\left[ \sum_{Q \in \mathcal{Q} } \omega_{\theta}(Q; B_j)^s \right]
	\ge&\, \lambda - \max\left\{3\varepsilon^{-1}r^dd^2m^{d-1}, \frac{3}{2} \left(\prod_{a \in [d]}q_{i_a}\right) |\mathcal{B}|\mu_{s,\theta}(\mathcal{B}; B_j)\right\} 
	\ge \frac{1}{4}\lambda.
\end{align*}
Since
\[
	\lambda 
	= \left(\prod_{a \in [d]}q_{i_a}\right) |\mathcal{B}| \max\left\{ 2\mu_{s,\theta}(\mathcal{B}; B_j), \frac{\varepsilon'}{2}\right\}
	\ge \left(r^{-1}m^{-1/(10d)}\right)^{d} (2^{-1/(2d)}\varepsilon m)^{d} \cdot \frac{\varepsilon'}{2}
	= \frac{\varepsilon'\varepsilon^{d}}{4r^d}m^{d-1/10},
\]
the above mentioned probability is at most
\[
	2e^{-(\lambda')^2 / (2r^{2d}m^{2d-1/4})}
	\le 2e^{-\lambda^2 / (32r^{2d}m^{2d-1/4})}
	\le 2e^{-2^{-9}(\varepsilon')^2\varepsilon^{2d}r^{-4d}m^{1/20}}
	< \frac{3}{4(kr)^{d+1}}.
\]

Since there are at most $(kr)^{d+1}$ choices of indices, 
we see that $\sum_{Q \in \mathcal{Q} } \omega_{\theta}(Q; B_j)^s \le \lambda$ 
holds for all choices, with probability at least $\frac{3}{4}$. Recall that $E_3$ holds with probability greater than $\frac{3}{4}$.
We show that $E_2$ holds if both these events hold. 
For a fixed $((i, j), (i_1, j_1), \cdots, (i_d, j_d)) \in ([k] \times [r])^{d+1}$,
following the notation above, event $E_3$ implies that 
$\left(\prod_{a \in [d]}q_{i_a}\right) |\mathcal{B}| \le 2^{1/2}|\mathcal{Q}|$. Therefore,
\begin{align*}
	\sum_{Q \in \mathcal{Q} } \omega_{\theta}(Q; B_j)^s 
	\le&\,  \lambda = \left(\prod_{a \in [d]}q_{i_a}\right) |\mathcal{B}| \max\left\{ 2\mu_{s,\theta}(\mathcal{B}; B_j), \frac{\varepsilon'}{2}\right\} \\
	\le &\, 2^{1/2}|\mathcal{Q}| \max\left\{ 2\mu_{s,\theta}(\mathcal{B}; B_j), \frac{\varepsilon'}{2}\right\}
	\le |\mathcal{Q}| \max\left\{ 2^{3/2}\mu_{s,\theta}(\mathcal{B}; B_j), \varepsilon'\right\}.
\end{align*}

Note that if $r=2$, then $\mathcal{B} = B_{-j}^d$ and thus $\mu_{s,\theta}(\mathcal{B}; B_j) = \mu_{s,\theta}(B_{-j}^{d}; B_j)$. Therefore, we have $E_3$.
If $r \neq 2$, then since all sets $B_a$ for $a \in [r]$ have size between $2^{-1/(2d)}\varepsilon m$ and $m$, 
we see that $|B_{-j}| \le 2^{1/(2d)}r\varepsilon^{-1}|B_a|$ for all $a \neq j$. 
Therefore, $|B_{-j}^{d}| \le 2^{1/2}(r\varepsilon^{-1})^d |\mathcal{B}|$, and
\[
	\mu_{s,\theta}(\mathcal{B}; B_j)
	= \frac{1}{|\mathcal{B}|} \sum_{Q \in \mathcal{B}} \omega_{\theta}(Q;B_j)^{s}
	\le \frac{2^{1/2}(r\varepsilon^{-1})^d}{|B_{-j}|^{d}} \sum_{Q \in B_{-j}^{d}}\omega_{\theta}(Q;B_j)^{s}
	= 2^{1/2}r^{d} \varepsilon^{-d} \mu_{s,\theta}(B_{-j}^d; B_j). 
\]
Thus, $E_2$ holds with probability greater than $\frac{1}{2}$.
We proved that there is non-zero
probability that all $E_1, E_2, E_3$ hold, and as seen above, this proves the lemma.
\end{proof}

\section{Concluding remarks} \label{sec:remarks}

\noindent \textbf{Original form of the Burr-Erd\H{o}s conjecture}.
The definition of Ramsey numbers can be extended to pairs of graphs. 
For a pair of graphs $H_1$ and $H_2$, the {\em Ramsey number} of the pair $(H_1, H_2)$,
denoted $r(H_1, H_2)$ is the minimum integer $n$ such that in every edge coloring of $K_n$
with two colors red and blue, there exists a red copy of $H_1$ or a blue copy of $H_2$. 
The {\em arboricity} of a graph is the minimum number of forests into which
its edge set can be partitioned. 
The original conjecture of Burr and Erd\H{o}s \cite{BuEr} can be stated as follows:

\begin{conj}
For every natural number $d$, there exists a constant $c$ such that for every pair
of graphs $H_1$ and $H_2$, each having arboricity at most $d$, 
$r(H_1, H_2) \le c(|V(H_1)| + |V(H_2)|)$.
\end{conj}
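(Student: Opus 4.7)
The plan is to derive this conjecture as a direct corollary of Theorem~\ref{thm:main}; no new embedding work is needed.

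First I would convert arboricity into the parameters that Theorem~\ref{thm:main} speaks in terms of, namely degeneracy and chromatic number. If $G$ has arboricity at most $d$, then by Nash--Williams every subgraph $G[S]$ has at most $d(|S|-1)$ edges, so $G[S]$ contains a vertex of degree at most $2d-1$. Thus $G$ is $(2d-1)$-degenerate, and greedy coloring in the degeneracy order yields chromatic number at most $2d$. So both $H_1$ and $H_2$ belong to the family $\mathcal{F}$ of $(2d-1)$-degenerate, $(2d)$-colorable graphs on at most $n:=\max(|V(H_1)|,|V(H_2)|)$ vertices.

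Second, set $d'=2d-1$, $r'=2d$, and let $c$ be the absolute constant from Theorem~\ref{thm:main}. Suppose $n\ge 2^{(d')^2 2^{cr'}}$. Then in any two-coloring of $K_N$ with $N\ge 2^{d'\,2^{cr'}}n$, Theorem~\ref{thm:main} guarantees that one of the colors is \emph{universal} for $\mathcal{F}$. Since both $H_1$ and $H_2$ lie in $\mathcal{F}$, that color simultaneously contains a copy of $H_1$ and a copy of $H_2$; in particular it contains either a red $H_1$ or a blue $H_2$, as required. Hence
\[
r(H_1,H_2)\le 2^{(2d-1)\,2^{c\cdot 2d}}\,n \le 2^{(2d-1)\,2^{c\cdot 2d}}\bigl(|V(H_1)|+|V(H_2)|\bigr).
\]
Finally I would dispose of the boundary regime $n<2^{(d')^2 2^{cr'}}$ where Theorem~\ref{thm:main} does not directly apply: here $|V(H_1)|+|V(H_2)|$ is bounded by a constant $M=M(d)$, so $r(H_1,H_2)\le r(K_{2M})$ is itself a constant depending only on $d$, which is absorbed into the final constant $c=c(d)$ in the statement.

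The main observation, and the reason no real obstacle arises, is that Theorem~\ref{thm:main} already delivers the much stronger \emph{universality} statement for the denser color, which automatically handles both $H_1$ and $H_2$ at once and therefore sidesteps the potential asymmetry inherent to two-color (off-diagonal) Ramsey numbers. The only ingredient beyond Theorem~\ref{thm:main} is the elementary translation between arboricity and degeneracy.
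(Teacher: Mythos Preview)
Your proposal is correct and follows essentially the same route as the paper: translate bounded arboricity into bounded degeneracy (and hence bounded chromatic number) and then invoke Theorem~\ref{thm:main}. The paper compresses this into a single sentence (``arboricity and degeneracy are within a factor two of each other and hence Theorem~\ref{thm:main} implies this conjecture''), while you spell out the Nash--Williams bound, the universality argument needed for the off-diagonal statement, and the small-$n$ boundary case; these details are sound and exactly what the paper leaves implicit.
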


It is well-known that arboricity
and degeneracy are within a factor two of each other and hence Theorem~\ref{thm:main}
implies this conjecture. Moreover, our proof can be extended
to more than two colors.

\medskip

\noindent \textbf{Determining the constant}.
For graphs with fixed chromatic number, the constant $c_d$ 
we found is exponential in $d$, which is best possible up to the constant in the exponent. 
For general degenerate graphs, the constant $c_d$ we obtained is double-exponential in $d$, and
it still remains to understand the correct behavior of this constant. 
The corresponding question for bounded degree graphs is reasonably well-understood since 
Conlon, Fox, and Sudakov \cite{CoFoSu, Conlon, FoSu09}
proved $r(G) \le c^{\Delta \log \Delta} |V(G)|$ for all graphs of degree at most $\Delta$, 
and $r(G) \le c^{r\Delta} |V(G)|$ for all $r$-colorable graphs of degree at most $\Delta$.
These bounds are close to being best possible since Graham, R\"odl, and
Ruci\'nski proved that there are bipartite graphs of maximum degree $\Delta$ having
$r(G) \ge c^{\Delta}|V(G)|$ (for some different constant $c$). 
Moreover, the author \cite{Lee2} proved that a transference principle holds for bounded degree graphs,
and thus if $G$ has a `simple' structure, then 
the bound on the constant can be significantly improved. 
For example, if there exists a homomorphism $f$ from $G$ to a graph $H$ having 
maximum degree at most $d$ where $|f^{-1}(v)| = o(n)$ for each $v \in V(H)$, then 
$r(G) \le c^{d\log d}|V(G)|$. Hence, in this case the constant does not grow together 
with the maximum degree of $G$.

\medskip

\noindent \textbf{Further applications of the technique}.
We used a random greedy embedding algorithm together with
dependent random choice. Theorem~\ref{thm:success} shows that our embedding algorithm succeeds with
probability greater than $\frac{1}{2}$. 
Hence, a careful analysis will show that there are in fact many copies of the graph of interest. 
It would be interesting to find further applications of these methods. For instance, 
by using a variant
of the proof of Lemma~\ref{lem:drc_final} as in \cite{Lee} together with the embedding methods
developed in this paper, one can show that a weak version
of the blow-up lemma holds for degenerate graphs. Namely, for all $d, \delta$ and 
sufficiently large $n$, there exist
$\varepsilon$ and $c$ such that if $\{V_i\}_{i \in [r]}$ are
disjoint vertex subsets each having size at least $n$ and $(V_i, V_j)$ are $(\varepsilon,\delta)$-dense
for each distinct $i,j$, then it contains as subgraphs all $d$-degenerate $r$-colorable graphs
with at most $c^{d}n$ vertices.  
One might be able to further develop this approach as in \cite{Lee}, 
and extend the bandwidth theorem of B\"ottcher, Schacht, and Taraz \cite{BoScTa} to 
degenerate graphs of sublinear bandwidth.

\medskip

\noindent \textbf{Related problems}.
As observed by Burr and Erd\H{o}s, graphs with at least $(1+\varepsilon)n\log n$ edges 
have Ramsey numbers superlinear in the number of vertices. 
On the other hand, they showed that there are graphs with $cn \log n$ edges 
for some constant $c$ that have Ramsey numbers linear in the number of vertices.
It would be interesting to further classify the graphs that have Ramsey number linear
in terms of its number of vertices. An interesting test case is 
the class of hypercubes $Q_n$,
for which we slightly improved the previous best known bound to $r(Q_n) = (1 + o_n(1))2^{2n}$.
Burr and Erd\H{o}s conjectured that there exists a constant $c$ such that $r(Q_n) \le c 2^n$ holds 
for all natural numbers $n$.

In a similar direction, there has been much work aimed at understanding
the Ramsey number of a graph
in terms of its number of edges. The most notable result in this direction 
was proved by Sudakov \cite{Sudakov}, who confirmed a conjecture of Erd\H{o}s and
Graham by showing that $r(H) \le 2^{c\sqrt{m}}$ holds for all graphs $H$ with $m$ edges.
Also, Conlon, Fox, and Sudakov have an interesting conjecture 
\cite[Conjecture 2.16]{CoFoSu_survey}, asking whether $\log(r(H)) = \Theta(d(H) + \log n)$
holds for all $n$-vertex graphs $H$, where $d(H)$ is the degeneracy of $H$.

\medskip

\noindent \textbf{Acknowledgements}. I thank David Conlon, Jacob Fox, and Benny Sudakov for their valuable remarks. I also thank the anonymous referee for carefully reading the manuscript and making several useful comments.

\end{document}